\newcommand{\beq}{\begin{eqnarray}}
	\newcommand{\eeq}{\end{eqnarray}}
\newcommand{\beqn}{\begin{eqnarray*}}
	\newcommand{\eeqn}{\end{eqnarray*}}
\newcommand{\rar}{\rightarrow}
\newcommand*{\mf}{\mathbf}
\numberwithin{equation}{section}
\newtheorem{theorem}{\bf Theorem}[section]
\newtheorem{proposition}[theorem]{\bf Proposition}
\newtheorem{corollary}[theorem]{\bf Corollary}
\newtheorem{lemma}[theorem]{\bf Lemma}
\theoremstyle{remark}
\newtheorem{definition}[theorem]{\bf Definition}
\newtheorem{example}[theorem]{\bf Example}
\newtheorem{Non-example}[theorem]{\bf Non-Example}
\newtheorem{question}[theorem]{\bf Question}
\newtheorem{remark}[theorem]{\bf Remark}
\newcommand{\clb}{\mathcal{B}}
\newcommand{\clh}{\mathcal{H}}
\newcommand{\clm}{\mathcal{M}}
\newcommand{\cls}{\mathcal{S}}
\newcommand{\D}{\mathbb{D}}
\newcommand{\C}{\mathbb{C}}
\newcommand{\z}{\bm{z}}
\newcommand{\w}{\bm{w}}
\newcommand*{\Ge}{\geqslant}
\newcommand*{\inp}[2]{\langle{#1},\,{#2} \rangle}
\newcommand*{\Le}{\leqslant}
\def\D{\mathbb{D}}
 \newcommand{\ntwo}{\mathbb N_{\Ge 2}}
\theoremstyle{definition}
\theoremstyle{remark}
\begin{document}
\title[Multiplier varieties and multiplier algebras of CNP Dirichlet series kernels]{Multiplier varieties and multiplier algebras of CNP Dirichlet series kernels}

\author[H. Ahmed]{Hamidul Ahmed}
\address{Department of Mathematics, Indian Institute of Technology Bombay,
Powai, Mumbai, 400076, India}
\email{hamidulahmedslk@gmail.com, 22D0785@iitb.ac.in}

\author[B.K. Das]{B. Krishna Das}
\address{Department of Mathematics, Indian Institute of Technology Bombay,
Powai, Mumbai, 400076, India}
\email{bata436@gmail.com, dasb@math.iitb.ac.in}

\author[C. K. Sahu]{Chaman Kumar Sahu}
\address{Department of Mathematics, Indian Institute of Technology Bombay,
Powai, Mumbai, 400076, India}
\email{sahuchaman9@gmail.com, chamanks@math.iitb.ac.in}

\subjclass[2020]{14M10, 11M41, 47L15, 47A57, 46E22}
\keywords{Algebraic variety, Dirichlet series, Multiplier algebra, Complete Nevanlinna-Pick space.}

\begin{abstract}
We investigate isometric and algebraic isomorphism problems for multiplier algebras associated with Dirichlet series kernels that possess the complete Nevanlinna–Pick (CNP) property. 
A central aspect of our work is the explicit determination of the multiplier variety associated with each CNP Dirichlet series kernel, via polynomial equations derived from the arithmetic structure of the associated weight and frequency data.
This description of multiplier varieties enables us to classify when the multiplier algebras of a significant class of CNP Dirichlet series kernels are isomorphic, or isometrically isomorphic. 
In this setting, a striking rigidity phenomenon emerges whereby the multiplier algebra determines the kernel up to natural equivalence. The results established for CNP Dirichlet series kernels also extend to classical CNP kernels, yielding new results for the associated multiplier algebras even in the classical setting.
As an application, we resolve an open problem posed by McCarthy and Shalit (\cite{MS}).

\end{abstract}

\maketitle
\section{Introduction}
%

The study of multiplier algebras associated with reproducing kernel Hilbert spaces (RKHS) emerged as a central theme in modern multivariable operator theory, function theory, and operator algebras. The multiplier algebra of an RKHS is a dual operator algebra--typically non-selfadjoint--and hence plays a significant role in the broader theory of non-selfadjoint operator algebras. Moreover, it serves as a natural bridge between operator theory and function theory through its deep connections with interpolation problems, and in many cases, it also links to complex geometry via the underlying analytic varieties.
On the other hand, complete Nevanlinna–Pick (CNP) spaces often arising as subspaces of the Drury–Arveson space have attracted much interest due to their well-structured behavior, in contrast to general reproducing kernel Hilbert spaces. Many fundamental properties of classical spaces, such as the Hardy space and the Drury-Arveson space, naturally extend to the CNP setting. Numerous mathematicians have contributed to aspects of the theory of CNP spaces, including the structure and classification of their multiplier algebras; see, for example, the (necessarily incomplete) list of references \cite{ADP, ABJK, RH, DHS, DRS, Hartz_1, Hartz, KMS, Mccull92, Quiggin, Sa-Sh}. The monograph by Agler and McCarthy (\cite{AM}) is a foundational source on the theory of CNP spaces. 
Among the many types of CNP spaces studied, Hilbert spaces of Dirichlet series whose kernels exhibit the CNP property have emerged as a particularly rich class, connecting techniques from operator theory, function theory in several complex variables, and number theory.
The 
study of the Hilbert space of Dirichlet series gained significant attention through the 
seminal work of Hedenmalm, Lindqvist, and Seip (\cite{HLS}), which determines the multiplier algebra of the Hardy space of Dirichlet series and resolves a completeness problem originally posed by Beurling (\cite{Beurling}). 
Since then, numerous studies (e.g. \cite{AOS, Brevig-thesis, BPS, C-S, GH, MS, Ol, Sa,  Seip}) have explored aspects of their structure, from operator-theoretic and function-theoretic properties to the behavior of their multiplier algebras. The recent monograph \cite{QQ} gives a comprehensive account of Dirichlet series. 

This article is concerned with the isometric and algebraic isomorphism problems for multiplier algebras associated with CNP Dirichlet series kernels. Within this framework, the role of Dirichlet series kernels is twofold. On the one hand they provide concrete models of CNP spaces of Dirichlet series on right half-planes, and on the other their arithmetic structure--encoded by so-called weight and frequency data--determines the geometry of the associated multiplier varieties. Building on the work of \cite{DHS} and \cite{MS}, we determine the multiplier varieties associated with CNP Dirichlet series kernels and investigate the extent to which the underlying arithmetic data determine the associated multiplier algebra up to isometric or algebraic isomorphism.

Our first result is a new characterization of the set of all normalized CNP Dirichlet series kernels defined on a right half-plane. Let us denote by $\mathbb N$ the set of all natural numbers, and set
\[\ntwo:=\{n\in\mathbb N: n\Ge 2\}.\]
 We show (Theorem~\ref{all-cnp-char}) that CNP Dirichlet series kernels defined on some right half-plane $\mathbb H_\rho=\{s \in \mathbb C : \text{Re}(s) > \rho \}$, $\rho\in\mathbb R$, are precisely those of the form
\beqn
K_{\mf b, \mf n}(s,u) = \frac{1}{1-\sum_{j=1}^d b_jn_j^{-s-\overline{u}}},\quad s, u \in \mathbb H_\rho
\eeqn
for some $d\in\mathbb N \cup \{\infty\}$, $d$-tuples $\mf n=(n_j)_{j=1}^d $ of distinct elements in $\ntwo$, and $\mf b=(b_j)_{j=1}^d \in (0, \infty)^d$.
If $d=\infty$, for $K_{\mf b,\mf n}$ to be defined on some right half-plane, it is necessary that 
\[
\sigma_a\Big(\sum_{j=1}^d b_jn_j^{-s}\Big)< \infty,
\]
where we write $\sigma_a(\cdot)$ for the abscissa of absolute convergence of a Dirichlet series (see ~\eqref{sigma-a}).
We do not state this assumption explicitly, but it is understood to be in force whenever $d=\infty$.  
We refer to $\mf b$ and $\mf n$ as the \emph{weight and frequency} data, respectively, associated with the CNP kernel $K_{\mf b,\mf n}$. Since these data uniquely determine a CNP Dirichlet series kernel, \emph{we adopt the convention that $(\mf b,\mf n)$ denotes weight and frequency data of the same length}; that is, $\mf b$ is a $d$-tuple of positive real numbers and $\mf n$ is a $d$-tuple of distinct elements of $\mathbb{N}_{\ge 2}$ for some $d\in \mathbb N\cup \{\infty\}$. We denote by $\mathbb C^d$ the $d$-dimensional complex plane if $d<\infty$, and for $d=\infty$, $\mathbb C^d$ is the Hilbert space of all square summable sequences. The open unit ball in $\mathbb C^d$ is denoted by $\mathbb B_d$. Notice that, in terms of the function
$f_{\mf b, \mf n}:\mathbb H_\rho \rar \mathbb B_d$ defined by
\beq
\label{f-at-s-intro}
f_{\mf b, \mf n}(s) =(\sqrt{b_j} n_j^{-s})_{j=1}^d,\quad s \in \mathbb H_\rho,
\eeq
CNP Dirichlet series kernels are expressible in the standard form: 
\[
K_{\mf b, \mf n}(s,u) = \frac{1}{1-\sum_{j=1}^d b_jn_j^{-s-\overline{u}}}= \frac{1}{1-\langle f_{\mf b, \mf n} (s), f_{\mf b, \mf n}(u) \rangle},\quad s, u \in \mathbb H_\rho. 
\]
This realization identifies the CNP Dirichlet series kernel $K_{\mf b, \mf n}$ as the pullback of the Drury-Arveson kernel
\[
\mathcal S_d(\z,\w)=\frac{1}{1-\langle \z,\w\rangle}_{\C^d},\quad \mf z, \mf w \in \mathbb B_d.
\]
Thus, the multiplier algebra $\mathcal M(\mathcal H(K_{\mf b, \mf n}))$ of the reproducing kernel Hilbert space $\mathcal H(K_{\mf b, \mf n})$ associated with $K_{\mf b, \mf n}$ is isometrically isomorphic to the restriction of the multiplier algebra $\mathcal M_d$ of the Drury-Arveson space $H^2_d:=\mathcal H(\mathcal S_d)$ on $\mathbb B_d$. In this paper, our investigation centers around the following isometric isomorphism or algebraic isomorphism problem for the multiplier algebras arising from CNP Dirichlet series kernels.\medskip

\textbf{Problem.} \textit{Given CNP Dirichlet series kernels $K_{\mf b,\mf n}$ and $K_{\mf c, \mf m}$, under what conditions are the associated multiplier algebras isometrically isomorphic or algebraically isomorphic?}\medskip

We approach this problem via the general theory of CNP spaces, by which it is known that the multiplier algebra associated with $K_{\mf b, \mf n}$ is canonically isometrically isomorphic to $\mathcal M_{V}$ (see \cite[Theorem~C]{Sa-Sh}) which is the restriction of the multiplier algebra $\mathcal M_d$ of the Drury–Arveson space to a subvariety $V\subseteq \mathbb B_d$, known as the {\it smallest multiplier variety}. More precisely, the smallest multiplier variety $V$ associated with $K_{\mf b, \mf n}$ is given by
\beqn
V = \{\mf z \in \mathbb B_d : g(\mf z)=0\ \text{for all}\ g\in \mathcal M_d\ \text{such that}\ g|_{f_{\mf b, \mf n}(\mathbb H_\rho)}\equiv 0\},
\eeqn
where $f_{\mf b, \mf n}$ is as in ~\eqref{f-at-s-intro}. In a sense, $V$ is the closure of $f_{\mf b, \mf n}(\mathbb H_\rho)$ with respect to the topology generated by multipliers in $\mathcal M_d$.
Thus, an explicit determination of the variety $V$ is crucial for understanding the structure of the corresponding multiplier algebra. It turns out that the geometry of the variety depends intricately on the  weight data $\mf b$ and the arithmetic structure of the frequency data $\mf n$. 
For example, in ~\cite{MS}, it was shown that if $\sum_{j=1}^d b_j=1$ then the logarithms $\{\ln n_j: j=1,\dots,d\}$ are linearly independent over the rational numbers $\mathbb Q$ if and only if $V$ is the full ball $\mathbb B_d$. Therefore, in such a case, the multiplier algebra $\mathcal M(\mathcal H(K_{\mf b, \mf n}))$ is isometrically isomorphic to $\mathcal M_d$. We extend this result to a broader class of kernels, removing the hypothesis $\sum_{j=1}^n b_j=1$,
and provide a new and short proof of the forward implication using only the uniqueness theorem for Dirichlet series (see Theorem~\ref{ind-case-variety}). 

 One of the main contributions of this article is the explicit determination of the multiplier variety in the case where the logarithms $\{\ln n_j: j=1,\dots,d\}$ are linearly dependent over $\mathbb Q$. In this setting, the associated multiplier variety is a proper subvariety of the unit ball $\mathbb B_d$, and we express it as the common zero set of a collection of polynomials determined by the multiplicative relations among the $n_j$'s. To describe the result more precisely, let us define 
 \[
 \begin{array}{rl}
 &\mathcal J(\mf n):= \{ J\subseteq \{1, 2, \ldots, d\}: \{\ln n_j: j \in J\} \text{ is linearly dependent over}\ \mathbb Q \text{ and all of its} \\  & \hspace{3cm} \text{ proper subsets are linearly independent over}\ \mathbb Q\}.
 \end{array}
 \]
Even when $d = \infty,$ each $J \in \mathcal J(\mf n)$ is a finite set. In Proposition~\ref{uniq-decom}, it is shown that corresponding to each $J\in \mathcal J(\mf n)$ there exist a unique partition $\{J_1, J_2\}$ of $J$ and a unique $|J|$-tuple of natural numbers $(\beta_j)_{j\in J}$ with $\text{gcd}(\beta_j)_{j\in J}=1$ such that
\begin{align}\label{arithmetic relation}
 \prod\limits_{i\in J_1}n_i^{\beta_i}=\prod\limits_{i\in J_2}n_i^{\beta_i}.
\end{align}
With these notations, the description of the multiplier varieties is as follows. 
\medskip

\noindent
\textbf{Theorem A.}(Theorem~\ref{variety-gen} below)
\textit{Let $d \in \ntwo \cup \{\infty\}$, and let $(\mf b = (b_j)_{j=1}^d, \mf n= (n_j)_{j=1}^d)$ be a pair of weight and frequency data of length $d$ such that 
$\{\ln n_j: j=1,\dots,d\}$ is linearly dependent over $\mathbb Q$. 
Then, the multiplier variety $V$ associated with $K_{\mf b, \mf n}$ is given by
\beqn
V = \mathbb B_d \cap \bigcap_{J \in \mathcal J(\mf n)} Z(q_{_J}),
 \eeqn
where for each $J\in \mathcal J(\mf n)$ the polynomial
 \begin{equation*}
 q_{_J} (\mf z) = \prod\limits_{i \in J_1}
 b_{i}^{\frac{\beta_i}{2}}\prod\limits_{i \in J_2}z_i^{\beta_i}-\prod\limits_{i \in J_2}b_{i}^{\frac{\beta_i}{2}}\prod\limits_{i \in J_1}z_i^{\beta_i},\quad \mf z = (z_i)_{i=1}^d \in \mathbb C^d,
\end{equation*}
is determined by the associated unique partition $\{J_1, J_2\}$ of $J$ and the unique $|J|$-tuple of integers $(\beta_j)_{j\in J}$ with $\textup{gcd}(\beta_j)_{j\in J}=1$, as in \eqref{arithmetic relation}, and
$Z(q_{_J})$ denotes the zero set of the polynomial $q_{_J}$.}
\medskip

Several examples demonstrate that the above theorem yields a concrete and computable description of the smallest multiplier variety associated with $K_{\mf b, \mf n}$ in the rationally log-dependent case. It is natural to expect that the explicit description of the multiplier varieties will represent a significant step forward in the classification program for multiplier algebras in the setting of CNP Dirichlet series kernels. We illustrate this through several results addressing the (isometric) isomorphism problem for multiplier algebras, as described below.

As a step toward answering the above problem, we first show that if $(\mf b, \mf n)$ and $(\mf c, \mf m)$ are pairs of weight and frequency data 
of different lengths, then the corresponding multiplier algebras associated with the CNP kernels $K_{\mf b, \mf n}$ and $K_{\mf c, \mf m}$ are never isometrically isomorphic (see Proposition~\ref{never-isom-iso} below). This follows from a comparison of the affine dimensions and codimensions of their associated multiplier varieties. Consequently, in addressing the isometric isomorphism problem, it suffices to restrict our attention to CNP kernels with the length of the frequency data being the same.
Now observe that if $V$ is the multiplier variety associated with a CNP kernel $K_{\mf b, \mf n}$ then the composition map
\beq\label{cbn}
C_{f_{\mf b, \mf n}}: \mathcal M_V\to \mathcal M(\mathcal H(K_{\mf b, \mf n})),\quad  g\mapsto g\circ f_{\mf b, \mf n},
\eeq
is an isometric isomorphism (see, \cite[Theorem~C]{Sa-Sh} \& \cite[Section 5]{MS}), where $f_{\mf b, \mf n}$ is as in ~\eqref{f-at-s-intro}. Consequently, for two CNP kernels, a natural route to isometric isomorphism of their multiplier algebras is via the equality of their associated multiplier varieties. This leads to the following question:\medskip 

\textbf{Question 1.} \textit{Under what conditions are the multiplier varieties associated with two CNP kernels equal?}\medskip

To address this question, we introduce a notion of similarity on the collection of all pairs of weight and frequency data.
\begin{definition}
Let $d \in \ntwo \cup \{\infty\}$. Let $(\mf b=(b_i)_{i=1}^d, \mf n)$ and $(\mf c=(c_i)_{i=1}^d, \mf m)$ be two pairs of weight and frequency data of length $d$.
We say that $(\mf b, \mf n)$ and $(\mf c, \mf m)$ admit a {\it similar pattern} if:
\begin{itemize}
\item[(i)] $\mathcal J(\mf n) = \mathcal J(\mf m)$, and
\item[(ii)] for each $J \in \mathcal J(\mf n)$, the unique partition $\{J_1, J_2\}$ and the unique $|J|$-tuple $(\beta_i)_{i \in J}$ associated with $J$ remain the same whether $J$ is viewed as a subset of $\mathcal J(\mf n)$ or $\mathcal J(\mf m)$, and the following identity holds:
\[
\prod\limits_{i \in J_1}
 		c_i^{\beta_i}\prod\limits_{i \in J_2}  
   b_{i}^{\beta_i}=\prod\limits_{i \in J_2}c_i^{\beta_i}\prod\limits_{i \in J_1}b_i^{\beta_i}.
\]
\end{itemize}
\end{definition}
The item (i) in the above definition reflects that $\mf n$ and $\mf m$ share the same arithmetic structure, while item (ii) can be interpreted as a compatibility condition between the weight data $\mf b$ and $\mf c$. This notion of a similar pattern induces an equivalence relation on the collection of pairs of weight and frequency data.
Building on the explicit description of multiplier varieties, we establish that the similarity of weight and frequency data is necessary and sufficient for the corresponding multiplier varieties to coincide. This resolves Question 1 and leads to the following result.\medskip

\noindent
\textbf{Theorem B.}(Proposition~\ref{equality of varieties} and Theorem~\ref{var-equal-set} below)
\textit{ Let $d \in \ntwo \cup \{\infty\}$. Let $(\mf b, \mf n)$ and $(\mf c$, $\mf m)$ be two pairs of weight and frequency data of length $d$. 
Suppose that $V_1$ and $V_2$ are the multiplier varieties associated with $K_{\mf b, \mf n}$ and $K_{\mf c, \mf m}$, respectively.
Then the following statements are equivalent.}

\textup{(i)} \textit{The pairs $(\mf b, \mf n)$ and $(\mf c, \mf m)$ admit a similar pattern.}

\textup{(ii)} $V_1=V_2$.

\textup{(iii)} \textit{The identity map $\mathrm{Id}: \mathcal M_{V_1}\to \mathcal M_{V_2}$ is an isometric isomorphism.}

\textup{(iv)} \textit{There exists an isometric isomorphism $\Phi: \mathcal M (\mathcal H(K_{\mf b, \mf n}))\to \mathcal M(\mathcal H(K_{\mf c, \mf m}))$ such that 
\[C_{f_{\mf c, \mf m}}^{-1}\circ \Phi\circ C_{f_{\mf b, \mf n}}= \mathrm{Id}: \mathcal M_{V_1}\to \mathcal M_{V_2},\] 
where $C_{f_{\mf b, \mf n}}$ is as in ~\eqref{cbn}. }
\medskip 

Item (i) in the above theorem is readily verifiable, which makes the result concrete and yields a range of explicit examples (see Example~\ref{iso-isom-example}) of isometrically isomorphic multiplier algebras. To the best of our knowledge, no alternative approach currently yields such isometric isomorphisms. These examples further demonstrate the subtle and intrinsic dependence of multiplier algebras on the underlying weight and frequency data.

Finally, we examine a distinguished class of CNP Dirichlet series kernels and resolve a question posed by McCarthy and Shalit (\cite{MS}), showing that the answer is negative. This class is defined by specific arithmetic structure of the frequency data. 
Let $d \in \ntwo$, and let $(\mf b, \mf n)$ be a pair of weight and frequency data of length $d$.
Suppose that there exists a subtuple $(n_{k_i})_{i=1}^{d'}$ of the frequency data $\mf n=(n_j)_{j=1}^d$ such that the set $\{\ln n_{k_i}: i=1,\dots,d'\}$ is linearly independent over $\mathbb Q$, and each $n_j$ is a product of non-negative integer powers of the $n_{k_i}$'s, that is,
\beqn
n_j = n_{k_1}^{\alpha_1}n_{k_2}^{\alpha_2} \cdots n_{k_{d'}}^{\alpha_{d'}},
\eeqn
for some tuple $(\alpha_i)_{i=1}^{d'}$ of non-negative integers. In this case, we say that $(n_{k_i})_{i=1}^{d'}$ is the generating subtuple for $\mf n$. 
In this context, the structure of the associated multiplier algebras are already well understood in the extreme cases: 
\begin{itemize}
\item For $d'=1$, it follows from \cite[Corollary~7.4]{DHS} that $\mathcal M(\mathcal H(K_{\mf b, \mf n})) \cong \mathcal M_1$; \item for $d'=d$, the set $\{\ln n_j: j=1\dots, d\}$ is linearly independent over $\mathbb Q$, and the associated multiplier variety is the full ball $\mathbb B_d$, which yields $\mathcal M(\mathcal H(K_{\mf b, \mf n})) \cong \mathcal M_d$.
\end{itemize}
This led McCarthy and Shalit to ask:\medskip


\noindent
\textbf{Question 2.} \textit{Is $\mathcal M(\mathcal H(K_{\mf b, \mf n}))$ isomorphic to $\mathcal M_{d'}$?}\medskip

Unlike in the cases $d'=1$ or $d$, the situation changes drastically. We show that the answer is negative. In fact, there exists a large class of CNP kernels for which the multiplier algebra $\mathcal M(\mathcal H(K_{\mf b, \mf n}))$ is not isomorphic to $\mathcal M_{d'}$. 
Moreover, for this class of CNP kernels, we obtain a necessary and sufficient condition--formulated in terms of their weight and frequency data--for their multiplier algebras to be either algebraically isomorphic or isometrically isomorphic, thereby providing a complete solution to the isomorphism problem.  Remarkably, in this setting, algebraic isomorphism implies isometric isomorphism, and such an isomorphism occurs only when the associated multiplier varieties are equal up to a permutation automorphism. To express our result with precision, let $\sigma$ be a permutation of the index set $\{1,\dots, d\}$, and for any $d$-tuple $\mf b=(b_i)_{i=1}^d$, we define $\sigma(\mf b):=(b_{\sigma(i)})_{i=1}^d$.\medskip



\noindent
\textbf{Theorem C.}(Proposition~\ref{negative} and Theorem~\ref{isometric isomorphism} below) \textit{Let $3\Le d\in \mathbb N$, and let $(\mf b, \mf n)$ and $(\mf c, \mf m)$ be two pairs of weight and frequency data of length $d$.
Suppose that $\{\ln n_{i}: i=1,\dots,d-1\}$ and $\{ \ln m_i: i=1,\dots, d-1\}$ are linearly independent over $\mathbb Q$, and there exist non-empty subsets $I_1$, $I_2$ of $\{1,\ldots,d-1\}$ and tuples $(\alpha_i)_{i\in I_1}$, $(\beta_i)_{i\in I_2}$ of natural numbers such that
\[
n_d=\prod_{i\in I_1}n_i^{\alpha_i}\ \text{ and }\ m_d=\prod_{i\in I_2}m_i^{\beta_i}.
\]
Then  $\mathcal M(\mathcal H(K_{\mf b, \mf n}))$ is not isomorphic to $\mathcal M_{d-1}$. Moreover, the following are equivalent:}
\begin{itemize}
\item[(a)] \textit{$\mathcal M(\mathcal H(K_{\mf b, \mf n}))$ and $\mathcal M(\mathcal H(K_{\mf c, \mf m}))$ are isomorphic.}
\item[(b)] \textit{The pair $(\sigma(\mf b), \sigma(\mf n))$ and $(\mf c, \mf m)$ admit a similar pattern for some permutation $\sigma$ of the index set $\{1,\dots,d\}$. }
\item[(c)] \textit{$\mathcal M(\mathcal H(K_{\mf b, \mf n}))$ and $\mathcal M(\mathcal H(K_{\mf c, \mf m}))$ are isometrically isomorphic. }
\end{itemize}
\medskip
Since $K_{\sigma(\mf b),\sigma(\mf n)}= K_{\mf b, \mf n}$, part (b) of Theorem C establishes a remarkable rigidity phenomenon: the structure of the multiplier algebra uniquely determines the kernel, up to the natural equivalence induced by the associated weight and frequency data exhibiting the same pattern. The strength of Theorem C lies in item (b), which significantly simplifies the problem by reducing it to the arithmetic structure of the frequency and weight data, a condition that can be verified directly. The proof of Theorem C crucially uses the description of multiplier varieties established in Theorem A. We show that the multiplier variety associated with $K_{\mf b, \mf n}$ is not biholomorphic to the ball $\mathbb B_{d-1}$, which proves that $\mathcal M(\mathcal H(K_{\mf b, \mf n}))$ is not isomorphic to $\mathcal M_{d-1}$. For the moreover part, we establish that the multiplier varieties of $K_{\mf b, \mf n}$ and $K_{\mf c, \mf m}$ are biholomorphic if and only if there exists a permutation automorphism of $\mathbb B_d$ which maps one variety onto another.

The results described above are not limited to CNP Dirichlet series kernels, as they extend to classical CNP kernels as well. We establish a correspondence between classical CNP kernels on a domain and CNP Dirichlet series kernels that preserve multiplier varieties. Consequently, the associated multiplier algebras are isometrically isomorphic under this correspondence. This provides a bridge between CNP kernels and CNP Dirichlet series kernels. As a result, one can transfer results from the Dirichlet series setting to the classical setting, yielding new conclusions even in the classical context. See Theorem~\ref{cnp classical} for an example. These aspects are elaborated in the final section.  

The article is organized as follows. In the next section, we briefly recall the Hilbert space of Dirichlet series and review some basic properties. In Section~\ref{3}, we characterize all CNP Dirichlet series kernels and provide illustrative examples. An explicit description of the multiplier varieties associated with CNP Dirichlet series kernels is obtained in Section~\ref{4}. The problem of isometric isomorphism of multiplier algebras is addressed in Sections~\ref{5}. In Section~\ref{6}, we examine a distinguished class of CNP kernels to resolve a question of McCarthy and Shalit and obtain a complete classification of multiplier algebras--up to algebraic and isometric isomorphism--for this class of CNP kernels.

\section{Hilbert space of Dirichlet series}

 A {\it Dirichlet series} is a series of the form
$$f(s) = \sum_{n=1}^{\infty} a_{n} n^{-s},$$
where $a_n$ are complex numbers and $s$ is a complex variable. 	
A primary example is the Riemann zeta function $\zeta(s)=\sum_{n=1}^\infty n^{-s}$.
If $f$ is convergent at $s=s_{0}$, then it converges uniformly throughout the angular region 
\[\{s \in \mathbb C : |\arg(s-s_0)| < \frac{\pi}{2} -\delta\}\] for every real number $0< \delta < \frac{\pi}{2}$. Consequently, the series $f$ defines a holomorphic function on the right half-plane $\mathbb H_{\text{Re}(s_0)}=\{s\in\mathbb C: \text{Re}(s) > \text{Re}(s_0) \}$. For further details, see the excellent exposition on the theory of Dirichlet series \cite[Chapter~IX]{Ti}.
The abscissa of absolute convergence $\sigma_a(f)$ of a Dirichlet series $f$ is given by 
\beq
\label{sigma-a}
\sigma_a(f) : = \inf\left\{\text{Re}(s) : \sum_{n=1}^\infty |a_n| n^{-\text{Re}(s)}~\text{is convergent} \right\}.
\eeq

The following result asserts that a Dirichlet series is uniquely determined by its values on any right half-plane where it converges. 

\begin{proposition}$($Uniqueness of the Dirichlet series$)$.
	\label{uniq-dir}
For $\rho \in \mathbb R,$ suppose that the Dirichlet series $f(s) = \sum_{n=1}^\infty a_nn^{-s}$ converges on $\mathbb H_\rho$. If $f = 0$ on $\mathbb H_\rho$, then $a_n = 0$ for every integer $n \Ge 1.$
\end{proposition}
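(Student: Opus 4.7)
The plan is to proceed by contradiction: assume some coefficient is nonzero, let $N$ be the smallest index with $a_N \neq 0$, and isolate $a_N$ by letting $\text{Re}(s) \to +\infty$ along the real axis.

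First I would invoke the classical gap theorem for Dirichlet series, which asserts that the abscissa of absolute convergence exceeds the abscissa of convergence by at most $1$. Since $f$ converges throughout $\mathbb H_\rho$, it follows that $f$ converges absolutely on $\mathbb H_{\rho+1}$. Fix any real $\sigma_0 > \rho + 1$ and set $C = \sum_{n=1}^\infty |a_n| n^{-\sigma_0}$, which is finite.

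Next, for real $\sigma > \sigma_0$, the hypothesis $f(\sigma) = \sum_{n=1}^\infty a_n n^{-\sigma} = 0$ combined with $a_n = 0$ for $n < N$ gives, after multiplication by $N^\sigma$,
$$a_N = -\sum_{n=N+1}^\infty a_n (N/n)^\sigma.$$
Using the factorisation $(N/n)^\sigma = (N/n)^{\sigma-\sigma_0}(N/n)^{\sigma_0}$ together with the uniform bound $N/n \le N/(N+1)$ valid for $n \ge N+1$, I would estimate
$$|a_N| \le \left(\frac{N}{N+1}\right)^{\sigma-\sigma_0} N^{\sigma_0} C,$$
and let $\sigma \to \infty$ to deduce $a_N = 0$, contradicting the minimality of $N$.

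The only substantive ingredient is the gap-at-most-$1$ theorem relating the two abscissas, which is classical and available from the reference \cite{Ti} already cited in the excerpt; once it is in hand, the argument reduces to a routine tail estimate, so I do not foresee any genuine obstacle.
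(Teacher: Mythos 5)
The paper states Proposition~\ref{uniq-dir} without any proof, treating it as a classical fact about Dirichlet series; there is therefore no ``paper proof'' to compare against. Your argument is the standard textbook proof of this uniqueness theorem and it is correct: the only nontrivial input is the classical estimate $\sigma_a \le \sigma_c + 1$ relating the abscissas of absolute and conditional convergence, which is precisely what lets you pass from convergence on $\mathbb H_\rho$ to absolute convergence on $\mathbb H_{\rho+1}$, after which the minimal-index/tail-estimate argument goes through. (If you wanted to avoid citing the gap theorem outright, you could derive the needed absolute convergence directly: convergence at $s=\rho+1$ forces $a_n n^{-(\rho+1)} \to 0$, hence $|a_n| = O(n^{\rho+1})$, so $\sum |a_n| n^{-\sigma}$ converges for $\sigma > \rho+2$, which is all the argument requires; but this is of course just the proof of the gap bound, so nothing is really saved.) The chain of inequalities isolating $a_N$ and the limit $\sigma \to \infty$ are both correct, and the contradiction with the minimality of $N$ is clean.
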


We will use this uniqueness property of Dirichlet series repeatedly throughout the article. The next proposition describes the product of two Dirichlet series (see \cite[Theorem~4.3.1 and discussion prior to Theorem~4.3.4]{QQ}). 

\begin{proposition}\label{prod-series}
Let $f(s) = \sum_{n=1}^\infty a_n n^{-s}$ and $g(s) = \sum_{n=1}^\infty b_n n^{-s}$ be two convergent Dirichlet series on $\mathbb H_\rho$, for some $\rho\in\mathbb R$. If $g$ converges absolutely on $\mathbb H_\rho,$ then the product is given by 
	\beqn
	f(s)g(s) = \sum_{n=1}^\infty \Big(\sum_{m | n} a_m b_{\frac{n}{m}}\Big) n^{-s},
	\eeqn
and it converges on $\mathbb H_\rho.$
\end{proposition}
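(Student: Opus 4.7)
The plan is to adapt the classical Mertens theorem on Cauchy products of series to the setting of Dirichlet convolution. Fix $s \in \mathbb H_\rho$ and set $\sigma = \text{Re}(s)$. The convergence of $f(s)$ provides bounded partial sums $F_N(s)=\sum_{m=1}^N a_m m^{-s}$ (whose tails are Cauchy), while the absolute convergence of $g$ at $s$ gives a finite $B:=\sum_{k=1}^\infty |b_k| k^{-\sigma}$. The goal is to show that $H_N(s) := \sum_{n=1}^N c_n n^{-s}$, with $c_n := \sum_{m\mid n} a_m b_{n/m}$, converges to $f(s)g(s)$ as $N\to \infty$.

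The key algebraic step is the reindexing
\[
H_N(s) \;=\; \sum_{n=1}^N n^{-s}\sum_{m\mid n} a_m b_{n/m} \;=\; \sum_{m=1}^N a_m m^{-s}\, G_{\lfloor N/m\rfloor}(s),
\]
obtained by rewriting each divisor pair $(m, n/m)$ as a pair $(m,k)$ with $mk=n$ and $m,k\le N$. Here $G_M(s):=\sum_{k=1}^M b_k k^{-s}$. Subtracting this identity from $F_N(s)g(s)$ gives
\[
F_N(s)g(s) - H_N(s) \;=\; \sum_{m=1}^N a_m m^{-s}\bigl(g(s) - G_{\lfloor N/m\rfloor}(s)\bigr).
\]

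To prove this difference tends to zero, I would split the right-hand sum at $M=\lfloor \sqrt{N}\rfloor$. For $m\le M$, the factor $|g(s)-G_{\lfloor N/m\rfloor}(s)|$ is uniformly bounded by the tail $\sum_{k>\sqrt{N}}|b_k|k^{-\sigma}$, which is $o(1)$ by the absolute convergence of $g$; combined with the uniform boundedness of $|F_M(s)|$ (and a summable weight via $B$), this block is $o(1)$. For $M<m\le N$, the Cauchy property of the partial sums of $f$ at $s$, invoked through Abel summation, shows that $\bigl|\sum_{M<m\le N}a_m m^{-s}\bigr|$ is small, while $|g(s)-G_{\lfloor N/m\rfloor}(s)|\le |g(s)|+B$ is uniformly bounded; hence this block is also $o(1)$. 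Since $F_N(s)g(s)\to f(s)g(s)$, this simultaneously yields convergence of $\sum_n c_n n^{-s}$ at every $s\in \mathbb H_\rho$ and the claimed product formula.

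The main obstacle is the second block, $M<m\le N$, where only the plain (non-absolute) convergence of $f$ at $s$ is available. One must carry out an Abel summation on the oscillatory sum $\sum a_m m^{-s}\bigl(g(s)-G_{\lfloor N/m\rfloor}(s)\bigr)$ in which the factor $g(s)-G_{\lfloor N/m\rfloor}(s)$ itself varies with $m$; controlling the variation of this factor while summing by parts is precisely where the hypothesis of absolute convergence of $g$ (rather than merely of both factors) is genuinely used.
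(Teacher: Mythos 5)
The paper does not prove Proposition~\ref{prod-series}; it quotes it from Queffelec--Queffelec (\cite{QQ}, Theorem~4.3.1 and the discussion preceding Theorem~4.3.4). So there is no in-paper proof to compare against, only the standard Mertens-type argument.

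Your plan is the right one, and the reindexing
$H_N(s)=\sum_{m\le N}a_m m^{-s}G_{\lfloor N/m\rfloor}(s)$
is correct, but you have put the factors the hard way around. In Mertens' theorem the \emph{absolutely} convergent series always goes on the outside. Reindex instead as
\[
H_N(s)=\sum_{k=1}^{N} b_k k^{-s}\,F_{\lfloor N/k\rfloor}(s),\qquad F_M(s):=\sum_{m=1}^{M}a_m m^{-s},
\]
so that
\[
H_N(s)-f(s)\,G_N(s)=\sum_{k=1}^{N} b_k k^{-s}\bigl(F_{\lfloor N/k\rfloor}(s)-f(s)\bigr).
\]
Now the split at $k=M$ closes by taking absolute values directly: for $k\le M$ the factor $|F_{\lfloor N/k\rfloor}(s)-f(s)|$ is uniformly small once $N/M$ is large, while $\sum_{k\le M}|b_k|k^{-\sigma}\le B$; for $k>M$ the factor is bounded by $\sup_{\ell}|F_\ell(s)|+|f(s)|$, while $\sum_{k>M}|b_k|k^{-\sigma}$ is a tail of an absolutely convergent series. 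Letting $N\to\infty$ and then $M\to\infty$ gives $H_N(s)\to f(s)g(s)$, with no summation by parts anywhere.

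With your chosen decomposition the proof is not unsalvageable, but as written the first block has a gap. You bound $|g(s)-G_{\lfloor N/m\rfloor}(s)|$ uniformly by $\varepsilon_N=o(1)$ for $m\le M$, but you cannot then factor this $\varepsilon_N$ out against $\sum_{m\le M}a_m m^{-s}$, because $\sum_{m\le M}|a_m|m^{-\sigma}$ may grow without bound --- $f$ is only conditionally convergent at $s$. The first block therefore needs exactly the same Abel summation you correctly flag as essential for the second: write it as $T_M\bigl(g(s)-G_{\lfloor N/M\rfloor}(s)\bigr)+\sum_{m<M}T_m\bigl(G_{\lfloor N/(m+1)\rfloor}(s)-G_{\lfloor N/m\rfloor}(s)\bigr)$ with $T_m$ the bounded partial sums of $f$, and note that the total variation $\sum_{m<M}\bigl|G_{\lfloor N/(m+1)\rfloor}(s)-G_{\lfloor N/m\rfloor}(s)\bigr|$ is controlled by the tail $\sum_{k>\lfloor N/M\rfloor}|b_k|k^{-\sigma}=o(1)$. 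Your phrase ``a summable weight via $B$'' gestures toward this, but the bound does not go through without the explicit summation by parts. Both the gap and the extra work disappear if you keep the absolutely convergent series $g$ on the outside.
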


Let $\mathcal D$ denote
the set of all functions representable by a convergent Dirichlet series
in some right half-plane. The following proposition determines a necessary and sufficient condition under which the inverse of a Dirichlet series can also be expressed as a Dirichlet series (see, \cite[Section~6.2]{Mc-D}).
\begin{proposition}\label{reciprocal-Dir}
Let $f  \in \mathcal D$. Then, $\frac{1}{f} \in \mathcal D$ if and only if $\lim\limits_{\text{Re}(s) \rar \infty}f(s) \neq 0$.
\end{proposition}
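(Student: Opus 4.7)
The plan is to exploit the elementary fact that if $f = \sum_{n\Ge 1} a_n n^{-s}$ lies in $\mathcal D$, then dominated convergence on a half-plane of absolute convergence gives $\lim_{\mathrm{Re}(s)\to\infty} f(s) = a_1$. Hence the hypothesis $\lim_{\mathrm{Re}(s)\to\infty}f(s)\neq 0$ reduces to $a_1\neq 0$, and I will prove the equivalence in this form.

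\emph{Necessity.} If $g := 1/f \in \mathcal D$ has expansion $\sum b_n n^{-s}$, the same observation yields $g(s)\to b_1$ as $\mathrm{Re}(s)\to\infty$. On a common half-plane of convergence the identity $f\cdot g \equiv 1$ holds (where the product is identified as a Dirichlet series via Proposition~\ref{prod-series}), and passing to the limit forces $a_1 b_1 = 1$, so $a_1\neq 0$.

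\emph{Sufficiency.} Assuming $a_1\neq 0$, decompose $f = a_1(1-h)$ with $h(s) := -\sum_{n\Ge 2}(a_n/a_1)\,n^{-s}$. Since $\sum_{n\Ge 2}|a_n| n^{-\sigma}\to 0$ as $\sigma\to\infty$ (dominated convergence on a half-plane of absolute convergence of $f$), I can choose $\sigma_0 > \sigma_a(f)$ such that $\|h\|_{\sigma_0} := \sum_{n\Ge 2}|a_n/a_1|\,n^{-\sigma_0} < 1$. Then on $\mathbb H_{\sigma_0}$ the Neumann series
\[
\frac{1}{f(s)} \;=\; \frac{1}{a_1}\sum_{k=0}^{\infty} h(s)^k
\]
converges absolutely and uniformly. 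Iterated use of Proposition~\ref{prod-series} realises each $h^k$ as a Dirichlet series $\sum_n d_n^{(k)} n^{-s}$, and a straightforward estimate gives the norm bound $\sum_n |d_n^{(k)}|\,n^{-\sigma_0} \Le \|h\|_{\sigma_0}^k$. Summing in $k$ produces an absolutely convergent double series on $\mathbb H_{\sigma_0}$, and Fubini's theorem then allows me to interchange the order of summation to exhibit $1/f$ as a single Dirichlet series $\sum_n b_n n^{-s}$ on $\mathbb H_{\sigma_0}$, where $b_n = \tfrac{1}{a_1}\sum_{k\Ge 0} d_n^{(k)}$. This proves $1/f\in\mathcal D$.

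The main obstacle is justifying the rearrangement step, which rests on the norm bound $\sum_n|d_n^{(k)}|\,n^{-\sigma_0}\Le \|h\|_{\sigma_0}^k$. This is an elementary computation: by the triangle inequality and the multiplicativity $n^{-\sigma_0} = n_1^{-\sigma_0}\cdots n_k^{-\sigma_0}$ for every factorisation $n = n_1\cdots n_k$, it reduces to the identity
\[
\sum_{n\Ge 1}\Bigl(\sum_{\substack{n_1\cdots n_k = n \\ n_i\Ge 2}} |c_{n_1}|\cdots |c_{n_k}|\Bigr) n^{-\sigma_0} \;=\; \Bigl(\sum_{n\Ge 2}|c_n|\,n^{-\sigma_0}\Bigr)^k,
\]
where $c_n = -a_n/a_1$. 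Beyond this, the only care needed is choosing $\sigma_0$ past the abscissa of absolute convergence of $f$, so that every manipulation is supported by an absolutely convergent series.
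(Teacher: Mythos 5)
Your proof is correct. Note that the paper itself does not prove Proposition~\ref{reciprocal-Dir}; it cites McCarthy's notes \cite[Section~6.2]{Mc-D} and leaves the argument to the reference. Your write-up fills this in by the standard geometric/Neumann series method: translate the hypothesis $\lim_{\mathrm{Re}(s)\to\infty}f(s)\neq 0$ to $a_1\neq 0$, split $f=a_1(1-h)$ with $h$ a Dirichlet series with no constant term, move $\sigma_0$ far enough right that $\sum_{n\ge 2}|a_n/a_1|n^{-\sigma_0}<1$, and sum the powers $h^k$ using Proposition~\ref{prod-series} plus the submultiplicativity of the $\ell^1$-type Dirichlet norm at $\sigma_0$. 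The Fubini rearrangement you flag is the crux and is correctly reduced to the identity $\sum_n\bigl(\sum_{n_1\cdots n_k=n}|c_{n_1}|\cdots|c_{n_k}|\bigr)n^{-\sigma_0}=\bigl(\sum_{n\ge 2}|c_n|n^{-\sigma_0}\bigr)^k$; since $n^{-\mathrm{Re}(s)}\le n^{-\sigma_0}$ for $\mathrm{Re}(s)\ge\sigma_0$ and $n\ge 1$, the absolute convergence of the double series is uniform on $\overline{\mathbb H_{\sigma_0}}$, so the interchange is legitimate everywhere you need it. One cosmetic point: the Neumann bound $|h(s)|\le\|h\|_{\sigma_0}<1$ for $\mathrm{Re}(s)>\sigma_0$ also guarantees $f(s)\neq 0$ there, so $1/f$ is actually defined; you could state this explicitly. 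Otherwise the argument is complete and is presumably the same as, or equivalent to, what the cited notes contain.
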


Let $\Omega$ be a nonempty set, and let $K:\Omega \times \Omega \to \mathbb C$ be a \emph{positive semi-definite kernel} on $\Omega$, that is, 
for any finite set $\{x_1,\ldots,x_n\}\subseteq \Omega$, the matrix  
\[\begin{bmatrix}
K(x_i,x_j)
\end{bmatrix}_{i,j=1}^n\] 
is positive semi-definite.
By Moore's theorem, such a kernel gives rise to a unique reproducing kernel Hilbert space $\clh(K)$ of scalar-valued functions on $\Omega$ associated with $K$ (see \cite{PaulRaghu}). In the context of Dirichlet series, let $\mathbf w = \{w_{n}\}_{n = 1}^{\infty}$ be a sequence of non-negative real numbers such that
\[\rho_\mf w: = \sigma_a(\sum\limits_{n=1}^\infty w_n n^{-s}) < \infty.\]
Then the function
\beqn
K_{\mf w}(s, u): =  \sum_{n = 1}^{\infty} w_{n} n^{-s-\overline{u}}, \quad s, u \in \mathbb H_{\frac{\rho_{\mf w}}{2}},
\eeqn
defines a positive semi-definite kernel on the right half-plane $\mathbb H_{\frac{\rho_{\mf w}}{2}}$ (see \cite[Lemma~20]{MS}). 
The reproducing kernel Hilbert space $\clh(K_{\mf w})$ associated with $K_\mf w$ is given by
 \beqn 
 \mathcal H(K_{\mf w})  = \Bigg\{\sum_{n = 1}^{\infty} a_n n^{-s}   : \|f\|^2_{\mf w} := \sum_{n = 1}^{\infty} \frac{|a_n|^{2}}{w_{n}} < \infty\Bigg\},
 \eeqn
where we adopt the convention that $n^{-s}$ is omitted from $\clh(K_\mf w)$ whenever $w_n = 0$.
Furthermore, for every $s \in \mathbb H_{\frac{\rho_{\mf w}}{2}},$ $K_\mf w(\cdot, s) \in \clh(K_{\mf w})$ and 
\beqn
\inp{f}{K_\mf w(\cdot, s)} = f(s), \quad f \in \clh(K_{\mf w}).
\eeqn
Note that $K_\mf w$ converges absolutely on $\mathbb H_{\frac{\rho_{\mf w}}{2}} \times \mathbb H_{\frac{\rho_{\mf w}}{2}}.$ 
We say $K_{\mathbf w}$ is {\it normalized} if $w_1 = 1$, or equivalently, if 
\[K_\mf w(\cdot, \infty):= \lim_{\sigma \rar \infty} K_\mf w(\cdot, \sigma)= 1.\] 
For instance, if $\mf 1$ denotes the constant sequence with value $1$, then $\rho_\mf 1 =1$ and the corresponding space  
\beqn
 \clh(K_\mf 1) = \Bigg\{\sum_{n = 1}^{\infty} a_n n^{-s}   :  \sum_{n = 1}^{\infty} |a_n|^{2} <  \infty\Bigg\}
\eeqn
is known as the {\it Hardy space of Dirichlet series} associated with the reproducing kernel 
\begin{equation}
\label{zeta-kernel}
    K_\mf 1(s, u) = \zeta(s+\overline{u}) = \sum_{n=1}^\infty n^{-s-\overline{u}}, \quad s, u \in \mathbb H_{1/2}.
\end{equation}

Let $K$ be a kernel on $\Omega$ and $\clh(K)$ be the associated reproducing kernel Hilbert space. A function $\varphi:\Omega \to \C$ is called a \emph{multiplier} on $\clh(K)$ if $\varphi f\in \clh(K)$ for all $f\in \clh(K)$. By an application of closed graph theorem, for each multiplier $\varphi$ on $\clh(K)$, the multiplication operator 
\[M_\varphi: \clh(K) \to \clh(K),\ f\mapsto \varphi f,
\] 
is bounded on $\clh(K)$. The collection of all multipliers on $\clh(K)$ forms a unital Banach algebra, denoted by $\clm(\clh(K))$, and is equipped with the norm $\|\varphi\|:= \|M_\varphi\|,$ for $\varphi \in \clm(\clh(K)).$

\section{CNP Dirichlet series kernels}\label{3}
Let $K$ be a kernel on a non-empty set $\Omega$. The matrix-valued Nevanlinna–Pick interpolation problem asks the following: given a finite set of points $\{x_1, \ldots, x_n\}\subseteq \Omega$ and matrices $W_1,\ldots, W_n \in M_{p,q}(\mathbb C)$ (the space of all $p\times q$ complex matrices), does there exist a multiplier $\varphi$ in 
\[\clm(\clh(K)\otimes \C^q,\clh(K)\otimes \C^p):=\{f:\Omega \to M_{p,q}(\mathbb C):f g\in \clh(K)\otimes \C^p \text{ for all }g\in\clh(K)\otimes \C^q\}\] such that
\[\varphi(x_i)=W_i \ (i=1,2,\ldots,n)\ \text{ and }\ \|M_\varphi\|\Le 1.\]
The contractivity requirement $\|M_{\varphi}\|\Le 1$ imposes the necessary condition that the Pick matrix
\[\begin{bmatrix}
(I-W_i W^*_j)K(x_i,x_j)
\end{bmatrix}_{i,j=1}^{n}\]
be positive semi-definite (see \cite[Theorem 5.8]{AM}). We say $K$ has the $M_{p,q}$-Nevanlinna-Pick property if this necessary condition is also sufficient for the existence of such a multiplier. A kernel $K$ is called a \emph{complete Nevanlinna-Pick kernel} (abbreviated CNP kernel) if $K$ has the $M_{p,q}$-Nevanlinna-Pick property for all $p,q\in \mathbb N$. In this case, the associated reproducing kernel Hilbert space $\clh(K)$ is called a CNP space. Prototype examples of CNP spaces include: 
\begin{itemize}
\item 
the Hardy space $H^2(\D)$ over the unit disc with Szeg$\ddot{o}$ kernel $\cls_1(z,w)=\frac{1}{1-z\bar{w}}$,
\end{itemize}
and
\begin{itemize}
\item 
the Drury-Arveson space $H^2_n$ over the unit ball $\mathbb B_n$ in $\mathbb C^n$ with kernel 
\begin{align*}
    \mathcal S_n(\z,\w)=\frac{1}{1-\langle \z,\w\rangle}_{\C^n},\quad \mf z, \mf w \in \mathbb B_n.
\end{align*} 
\end{itemize}
An excellent reference for the theory of the Nevanlinna–Pick interpolation problem and complete Nevanlinna–Pick spaces is the book by Agler and McCarthy \cite{AM}.
Building on the general framework, McCarthy and Shalit investigated the case of Dirichlet series kernels and established the following characterization of CNP Dirichlet series kernels (see \cite[Theorem~26]{MS}).
\begin{proposition}
	\label{cnp-char}
Let $\mathbf w= \{w_n\}_{n=1}^{\infty}$ be a sequence of non-negative real numbers, and let 
\[
K_{\mf w}(s, u): =  \sum_{n = 1}^{\infty} w_{n} n^{-s-\overline{u}}\]
be the associated Dirichlet series kernel defined on some right half-plane. 
Suppose that $w_1 \neq 0$ and the reciprocal Dirichlet series is given by   
	\beqn
	\frac{1}{\sum_{n = 1}^\infty w_nn^{-s}}=\sum_{n = 1}^\infty c_nn^{-s},
	\eeqn
    on some right half-plane.
	Then, $K_\mf w$ is a CNP kernel if and only if $c_n \Le 0$ for all $n \Ge 2.$
\end{proposition}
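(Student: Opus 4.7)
The plan is to reduce the complete Nevanlinna-Pick property of $K_{\mathbf w}$ to the positivity of an explicit auxiliary kernel, and then translate positivity into the sign condition on the coefficients $c_n$. Two classical inputs drive the argument: the McCullough--Quiggin characterization of CNP kernels--which states that, after normalization at a base point, a kernel $K$ is CNP if and only if $1-1/K$ is itself a positive semi-definite kernel--and the characterization of positive Dirichlet series kernels by non-negativity of their coefficients.

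First I would normalize. As the CNP property is invariant under multiplication by a positive scalar and $w_1\neq 0$, one may assume $w_1=1$. Comparing Dirichlet coefficients of $\bigl(\sum_n w_n n^{-s}\bigr)\bigl(\sum_n c_n n^{-s}\bigr)=1$ via Proposition~\ref{prod-series} and the uniqueness statement of Proposition~\ref{uniq-dir} yields $c_1=1$. Next I would invoke McCullough--Quiggin. Since $K_{\mathbf w}(s,s_0)\to w_1=1$ as $\operatorname{Re}(s_0)\to\infty$, normalizing $K_{\mathbf w}$ at a base point $s_0$ and letting $\operatorname{Re}(s_0)\to\infty$ (a routine compactness passage, using that the normalizing factors tend to $1$ on compact subsets) reduces the CNP property of $K_{\mathbf w}$ to the positive semi-definiteness of
\[
1-\frac{1}{K_{\mathbf w}(s,u)}=1-\sum_{n=1}^\infty c_n n^{-s-\overline{u}}=-\sum_{n\geq 2} c_n n^{-s-\overline{u}}
\]
as a kernel on the right half-plane where the reciprocal series converges.

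Finally I would invoke the coefficient criterion: a Dirichlet series kernel $\sum_{n}a_n n^{-s-\overline{u}}$ is positive semi-definite on a right half-plane if and only if $a_n\geq 0$ for every $n$. The easy direction follows from the factorization $n^{-s-\overline{u}}=n^{-s}\,\overline{n^{-u}}$, exhibiting each term as a rank-one positive kernel. For the converse, positivity gives
\[
\sum_{j,k}\lambda_j\overline{\lambda_k}\sum_n a_n n^{-s_j-\overline{s_k}}=\sum_n a_n\Bigl|\sum_j\lambda_j n^{-s_j}\Bigr|^2\geq 0
\]
for every finite choice of scalars $\lambda_j$ and points $s_j$ in the half-plane; using Proposition~\ref{uniq-dir} to isolate individual coefficients--most cleanly via the Bohr lift $s\mapsto (p_k^{-s})_{k\geq 1}$, which turns $\sum_n a_n n^{-s-\overline{u}}$ into a diagonal power-series kernel $\sum_\alpha a_{n(\alpha)} z^\alpha \overline{w^\alpha}$ on an infinite polydisc, where the equivalence ``positive kernel iff non-negative coefficients'' is classical--forces each $a_n\geq 0$. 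Applied with $a_n=-c_n$ for $n\geq 2$, this completes the proof. The main obstacle is precisely this final isolation step, where one genuinely uses the rigidity of the Dirichlet series structure beyond formal manipulations; once it is in hand, the rest of the argument is a clean assembly of McCullough--Quiggin together with the normalization bookkeeping.
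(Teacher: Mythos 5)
The paper does not supply a proof of this proposition; it is quoted verbatim from McCarthy--Shalit (\cite[Theorem~26]{MS}), so there is no in-paper argument to compare against. On its own merits, your plan is the standard one and is essentially sound: reduce to the McCullough--Quiggin criterion with the ``base point at infinity'' forced by the normalization $K_{\mathbf w}(\cdot,\infty)=1$, identify $1-1/K_{\mathbf w}$ with $-\sum_{n\ge2}c_n n^{-s-\overline u}$, and then characterize positivity of a Dirichlet series kernel by non-negativity of coefficients.

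Two places need firming up. First, your ``compactness passage'' only handles one direction. If $K_{\mathbf w}$ is CNP, then $F_{s_0}(s,u)=1-K(s,s_0)K(s_0,u)/(K(s_0,s_0)K(s,u))$ is positive for every $s_0$ in the domain, and letting $\operatorname{Re}(s_0)\to\infty$ does give positivity of $1-1/K_{\mathbf w}$ as a pointwise limit of positive kernels. But the converse --- that positivity of $1-1/K_{\mathbf w}$ implies CNP --- does not follow by running the limit backwards, since you would need $F_{s_0}$ to be positive for \emph{some} $s_0$ in the domain. The clean fix is the pullback argument you do not quite state: if $G:=1-1/K_{\mathbf w}$ is positive and $G(s,s)\in[0,1)$ (which holds because $K_{\mathbf w}(s,s)\ge w_1>0$), factor $G(s,u)=\langle b(s),b(u)\rangle$ with $b$ taking values in the open unit ball of some Hilbert space; then $K_{\mathbf w}=1/(1-\langle b,b\rangle)$ is the pullback of a Drury--Arveson kernel along $b$, hence CNP. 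This also sidesteps any fuss about vanishing of $K_{\mathbf w}$, since the hypothesis that $1/K_{\mathbf w}$ is a convergent Dirichlet series already forces $K_{\mathbf w}\ne 0$ on the relevant half-plane.

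Second, in the coefficient-extraction step, ``using Proposition~\ref{uniq-dir} to isolate individual coefficients'' is not by itself enough; uniqueness of Dirichlet series gives linear independence of the functions $n^{-s}$ but not the quantitative separation you need to make a single $a_{n_0}$ dominate in $\sum_n a_n\bigl|\sum_j\lambda_j n^{-s_j}\bigr|^2$. The Bohr-lift route you mention does work, but the positivity you start from lives only on the image of a vertical line, not on the whole infinite torus. You must invoke Kronecker's theorem to get density of $\{(p_k^{-\sigma-it})_k : t\in\mathbb R\}$ in the torus $\{|z_k|=p_k^{-\sigma}\}$, extend positivity by continuity, and then apply Bochner's theorem (or equivalently Fourier inversion of a positive-definite function) to deduce $a_n p_1^{-2\sigma\nu_1(n)}\cdots\ge 0$, hence $a_n\ge 0$. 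With those two points spelled out, your argument is a correct and fairly economical proof of the cited result.
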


Based on the above characterization, we next obtain an alternative description of CNP Dirichlet series kernels. This leads to a cohesive and general formulation of such kernels, and plays a critical role in identifying the smallest multiplier varieties, which we consider in the next section.

\begin{theorem}\label{all-cnp-char}
Let $d \in \mathbb{N} \cup \{\infty\}$, and let $(\mf b = (b_j)_{j=1}^d, \mf n= (n_j)_{j=1}^d)$ be a pair of weight and frequency data of length $d$ such that $\sigma_a(\sum_{j=1}^d b_j n_j^{-s}) < \infty$.
Then, there exists $\rho \in \mathbb R$ such that 
the kernel $K_{\mf b, \mf n}$ is a CNP kernel, where 
\beq
\label{kappa-b}
K_{\mf b, \mf n}(s, u) = \frac{1}{1 - \sum_{j=1}^d b_j n_j^{-s-\overline{u}}}, \quad s, u \in \mathbb H_\rho.
\eeq

Conversely, if $K$ is a non-constant normalized CNP kernel defined on some right half-plane, then $K$ is of the form $K_{\mathbf{b},\mathbf n}$, as in \eqref{kappa-b}, for some pair $(\mf b,\mf n)$ of weight and frequency data of length $d\in \mathbb N \cup \{\infty\}$ satisfying $\sigma_a(\sum_{j=1}^d b_j n_j^{-s}) < \infty$.
 \end{theorem}
\begin{proof}Since
$$
\lim_{\sigma \rar \infty} \sum_{j=1}^d b_j n_j^{-\sigma} = 0,
$$
there exists a real number $\rho$ such that $\sum_{j=1}^d b_j n_j^{-2\rho} < 1$, and hence $|\sum_{j=1}^d b_j n_j^{-s - \overline{u}}| < 1$ for all $s, u \in \mathbb H_\rho$. With this choice of $\rho$, since  $(s,u) \mapsto \sum_{j=1}^d b_jn_j^{-s-\overline{u}}$ is a positive semi-definite kernel on $\mathbb H_{\rho}$, 
    by the Schur product theorem, and the fact that positive semi-definiteness is preserved under limits, it follows that $K_{\mf b, \mf n}$ is a positive semi-definite kernel on $\mathbb H_\rho.$ Therefore, from the form of $K_{\mf b, \mf n}$, since 
$$
\frac{1}{K_{\mf b, \mf n}} = 1 - \sum_{j=1}^d b_j n_j^{-s-\overline{u}}, 
$$
by Proposition~\ref{cnp-char}, $K_{\mf b, \mf n}$ is a CNP kernel. 

To prove the converse, suppose that $K$ is a non-constant normalized CNP kernel defined on a right half-plane $\mathbb H_\rho$. Then, $K$ can not be defined on the whole complex plane $\mathbb C$ (see \cite[Theorem~28]{MS}), which ensures that $\rho \in \mathbb R.$ In this case, $K$ must be of the form
    \beqn
    K(s,u)=1+\sum_{n=2}^\infty a_nn^{-s-\overline{u}}, \quad s, u \in \mathbb H_\rho,
    \eeqn
where $a_n > 0$ for infinitely many $n$. By Proposition~\ref{reciprocal-Dir}, $(1+\sum_{n=2}^\infty a_nn^{-s})^{-1}$ is in $\mathcal D$, which implies the existence of $d \in \mathbb N \cup \{\infty\}$, $\rho' \in \mathbb R$, and a $d$-tuple $(x_j)_{j=1}^{d}$ of elements in $\mathbb C\setminus\{0\}$, along with a $d$-tuple $\mf n = (n_j)_{j=1}^{d}$ of distinct elements in $\ntwo$, such that
	\beq
 \label{K-inverse}
	K^{-1}(s,u) = 1 + \sum_{j=1}^{d} x_jn_j^{-s-\overline{u}}, \quad  s, u \in \mathbb H_{\rho'}.
	\eeq
Since $K$ is CNP, Proposition~\ref{cnp-char} implies $x_n < 0$ for all $n$. Define $b_n= -x_n>0$, and set $\mf b := (b_j)_{j =1}^{d}$. Then
\beqn
K(s,u) = \frac{1}{1-\sum_{j = 1}^{d} b_j n_j^{-s-\overline{u}}}  = K_{\mf b, \mf n}(s,u), \quad s, u \in \mathbb H_\rho.  
\eeqn
Also, by \eqref{K-inverse}, $\sum_{j = 1}^{d} b_j n_j^{-s}$ converges on $\mathbb H_{\frac{\rho'}{2}}$, which implies that $\sigma_a(\sum_{j = 1}^{d} b_j n_j^{-s})< \infty$. This completes the proof.
\end{proof}

By the above theorem,  a CNP Dirichlet series kernel is completely determined by a pair $(\mf b, \mf n)$ that consists of weight data and frequency data of the same length. 
Before proceeding further, we present a class of examples illustrating Theorem~\ref{all-cnp-char}.

\begin{example} 
\label{kappa-m}
(i) Let $d$ be a positive integer. Let $\mf b = (b_j)_{j=1}^d$ be a $d$-tuple of positive real numbers and $\mf n= (n_j)_{j=1}^d$ be a $d$-tuple of distinct elements in $\ntwo$. Consider a Dirichlet polynomial
\[f(s) = \sum_{j=1}^d b_jn_j^{-s}.\] 
Since $f$ is entire, we have $\sigma_\mf a(f) =-\infty$. Also, as $\lim\limits_{\sigma \rar -\infty}f(\sigma) = \infty$ and $\lim\limits_{\sigma \rar \infty}f(\sigma)  =0$, there exists a unique real number $\alpha$ such that $f(\alpha)=1.$ 
Hence, by Theorem~\ref{all-cnp-char}, $K_{\mf b, \mf n}$ is a CNP kernel on $\mathbb H_{\frac{\alpha}{2}}.$

(ii) For a real number $r>1$, let 
\[c: = \sum_{n=2}^\infty\frac{1}{n(\log n)^2} < \infty,\]
and define $\mf b = (b_j)_{j=1}^\infty$ by 
\beqn
b_j = \frac{1}{(c+1)(j+1)^r(\log (j+1))^2}, \quad j \Ge 1,
\eeqn
and set $\mf n = \ntwo.$ Then, a simple computation shows that the series \[\sum_{j =1}^\infty b_j (j+1)^{-\sigma}\] converges on $[1-r, \infty)$ and takes the value $\frac{c}{c+1} \in (0, 1)$ at $\sigma = 1-r.$ Moreover, by the integral test for convergence, the series diverges 
for $\sigma < 1-r$, implying that $\sigma_{\mf a}\big(\sum_{j=1}^\infty b_j (j+1)^{-s}\big)=1-r$. Therefore, by Theorem~\ref{all-cnp-char}, $K_{\mf b, \mf n}$ is a CNP kernel on $\mathbb H_{\frac{1-r}{2}}.$
        
(iii) For an integer $m \Ge 1,$ let $\rho_m \in (1, \infty)$ be the unique real number satisfying $\zeta(\rho_m)^m = 2$, where $\zeta$ denotes the Riemann zeta function. Consider the kernel
    \beq\label{Z_m_ker}
K_m(s, u): = \frac{1}{2- \zeta(s+\overline{u})^m}, \quad s, u \in \mathbb H_{\frac{\rho_m}{2}}.
    \eeq
 For every integer $n \Ge 2,$ let $d_m(n)$ denote the number of ways writing $n$ as a product of $m$ factors, where the order matters. Then, by \cite[Equation~(1.2.2)]{Ti}, we have 
\beqn
 K_m(s, u) = \frac{1}{1- \sum_{n=2}^\infty d_m(n)n^{-s-\overline{u}}},
\eeqn
and hence, by Theorem~\ref{all-cnp-char}, $K_m$ is a CNP Dirichlet series kernel on $\mathbb H_{\frac{\rho_m}{2}}.$ For the case $m=1$, the CNP kernel $K_1$ has been studied in detail in \cite[Section~3]{Mc-1}.
\end{example}
As an application of Theorem~\ref{all-cnp-char}, the following result characterizes a class of CNP Dirichlet series kernels that admit the kernel of the Hardy space of Dirichlet series as a CNP factor.


\begin{corollary}\label{zeta-factor}
Let $K_\mf 1$ be the kernel of the Hardy space of Dirichlet series as defined in \eqref{zeta-kernel}. Let $K_{\mf b, \mf n}$ be the CNP kernel, as in \eqref{kappa-b}, associated with an infinite tuple $\mf b = (b_j)_{j=1}^\infty$ of positive real numbers and $\mf n =(2,3,\dots)$. 
Then the quotient $\frac{K_{\mf b, \mf n}}{K_\mf 1}$ is a CNP kernel on $\mathbb H_\frac{\rho}{2}$ for some $\rho > 1$ if and only if for all $n\Ge 2$,
\beq\label{zeta-fac-suff}
	 \sum_{\substack{ m \Ge 2\\ m|n}} b_{m-1} \Ge 1.
 \eeq
In this case, the inclusion $H^\infty(\mathbb H_0)\cap  \mathcal D \subseteq \mathcal M(\clh(K_{\mf b, \mf n}))$ is a complete contraction.
\end{corollary}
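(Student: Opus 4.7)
\emph{Proof proposal.} The plan is to convert the CNP condition on $K_{\mf b,\mf n}/K_\mf 1$ into an explicit sign condition on the Dirichlet coefficients of $K_\mf 1/K_{\mf b,\mf n}$, and then read off both implications from Theorem~\ref{all-cnp-char}. Set $f(s) = \sum_{j\Ge 1} b_j(j+1)^{-s} = \sum_{n\Ge 2} b_{n-1}\,n^{-s}$, so that $K_{\mf b,\mf n}(s,u) = (1-f(s+\overline{u}))^{-1}$. Using Proposition~\ref{prod-series} with $\zeta$ (absolutely convergent on $\mathbb H_1$) as the absolutely convergent factor, I would compute on a common half-plane of absolute convergence
\[
\zeta(s)\bigl(1-f(s)\bigr) \;=\; 1 + \sum_{n\Ge 2}\Bigl(1 - \sum_{\substack{m\Ge 2\\ m\,|\,n}} b_{m-1}\Bigr) n^{-s},
\]
so that
\[
\frac{K_{\mf b,\mf n}(s,u)}{K_\mf 1(s,u)} \;=\; \frac{1}{1 - \sum_{n\Ge 2} c_n\,n^{-s-\overline{u}}}, \qquad c_n \;:=\; \sum_{\substack{m\Ge 2\\ m\,|\,n}} b_{m-1} - 1.
\]

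Next I would invoke Theorem~\ref{all-cnp-char}: the right-hand side above is a normalized CNP Dirichlet series kernel on some $\mathbb H_{\rho/2}$ (necessarily with $\rho>1$, to stay away from the pole of $\zeta$) if and only if every $c_n\Ge 0$--in which case the indices with $c_n>0$ supply the strictly positive weight data and frequency data required by that theorem. This sign condition is precisely~\eqref{zeta-fac-suff}. For the converse direction, the same chain of identities combined with the uniqueness of Dirichlet series coefficients (Proposition~\ref{uniq-dir}) forces all $c_n$ to be non-negative.

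For the final assertion, I would combine two standard ingredients: the Hedenmalm--Lindqvist--Seip theorem identifying $\mathcal M(\mathcal H(K_\mf 1))$ isometrically with $H^\infty(\mathbb H_0)\cap \mathcal D$, and the positivity of $K_{\mf b,\mf n}/K_\mf 1$ established above. The latter lets one express the matrix $[(I-\Phi(x_i)\Phi(x_j)^*)K_{\mf b,\mf n}(x_i,x_j)]_{i,j}$ as the Hadamard product of its $K_\mf 1$-analogue with the positive semi-definite block matrix $[(K_{\mf b,\mf n}/K_\mf 1)(x_i,x_j)\,I]_{i,j}$; Schur's theorem then forces $\mathcal M(\mathcal H(K_\mf 1))\hookrightarrow \mathcal M(\mathcal H(K_{\mf b,\mf n}))$ to be a complete contraction. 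The main delicate point I anticipate is the bookkeeping around half-planes of convergence--ensuring $\sum_{n\Ge 2} c_n n^{-s}$ converges on some $\mathbb H_\rho$ with $\rho>1$ and discarding indices with $c_n=0$ before applying Theorem~\ref{all-cnp-char}--but no substantial analytic obstacle is expected.
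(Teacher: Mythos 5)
Your proposal is essentially the same as the paper's argument: compute $1-\zeta\cdot(1-f)$ via Proposition~\ref{prod-series} to express $K_{\mf b,\mf n}/K_\mf 1$ in the form $(1-\sum c_n n^{-s-\overline u})^{-1}$ with $c_n=\sum_{m\ge 2,\ m\mid n}b_{m-1}-1$, read off the equivalence from Theorem~\ref{all-cnp-char} (resp.\ Proposition~\ref{cnp-char}), and then deduce complete contractivity of the inclusion from the Hedenmalm--Lindqvist--Seip identification of $\mathcal M(\clh(K_\mf 1))$ together with the positivity of the quotient kernel. Two small points of divergence. First, for the ``delicate point'' you flag about landing on $\mathbb H_{\rho/2}$ with $\rho>1$: the paper nails this down not by appealing to the pole of $\zeta$ but by observing that \eqref{zeta-fac-suff} at prime $n=p$ gives $b_{p-1}\ge 1$, so $\sum_{n\ge 2}b_{n-1}n^{-s}$ dominates $\sum_p p^{-s}$ and hence has abscissa of absolute convergence at least $1$ with divergence at $s=1$; this is what forces $\rho>1$ for $K_{\mf b,\mf n}$ itself via Theorem~\ref{all-cnp-char}, and should be made explicit in a complete write-up. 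Second, for the completely contractive inclusion, the paper simply cites \cite[Corollary~3.5]{Hartz_1}, whereas you unfold the underlying Schur/Hadamard product argument (apply it to block Pick matrices to get complete rather than merely 1-by-1 contractivity); both are correct and amount to the same thing.
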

\begin{proof}
Suppose that the inequality \eqref{zeta-fac-suff} holds. Then in particular, for every prime number $p$, we have $b_{p-1} \Ge 1$, and hence
\beqn
\sum_{n=2}^\infty b_{n-1} n^{- s} \Ge \sum_{n=1}^\infty p_n^{-s},
\eeqn
where $p_n$ denotes the $n$-th prime number. Since the series of reciprocals of primes diverges, it follows that the abscissa of absolute convergence of $\sum_{n=2}^\infty b_{n-1} n^{- s}$ is at least $1$, and the series diverges for $s=1$. Therefore, by Theorem~\ref{all-cnp-char}, $K_{\mf b, \mf n}$ is a CNP kernel on $\mathbb H_\frac{\rho}{2}$ for some $\rho > 1.$  
Now using Proposition~\ref{prod-series}, one obtains  
\beq
\label{ratio-kernel}
\frac{K_{\mf b, \mf n}(s,u)}{K_\mf 1(s, u)} \overset{\eqref{zeta-kernel}}= \frac{1}{\Big(1- \sum\limits_{n=1}^\infty b_n(n+1)^{-s-\overline{u}}\Big) \zeta(s+\overline{u})} = \frac{1}{1-\sum_{n \Ge 2}c_n n^{-s-\overline{u}}}, \quad s, u \in \mathbb H_{\frac{\rho}{2}},
	\eeq
where the coefficients are given by 
\[c_n = -1+\sum\limits_{\substack{m \Ge 2\\ m|n}} b_{m-1}, \quad n \Ge 2. 
\]
Since the inequality \eqref{zeta-fac-suff} holds, $c_n \Ge 0$ for all $n \Ge 2$. As a result, by Theorem~\ref{all-cnp-char},  the identity \eqref{ratio-kernel} implies $\frac{K_{\mf b, \mf n}}{K_\mf1}$ is a CNP Dirichlet series kernel on $\mathbb H_{\frac{\rho}{2}}$. 

%

Conversely, suppose that $\frac{K_{\mf b, \mf n}}{
K_\mf1}$ is a CNP kernel. Then, by Theorem~\ref{all-cnp-char} and the identity in \eqref{ratio-kernel}, we must have $c_n \Ge 0$ for all $n \Ge 2,$ which implies the required inequality. 

For the final part, recall from Hedenmalm, Lindqvist, and Seip (\cite[Theorem~3.1]{HLS}) that $\mathcal M(\clh(K_\mf1)) = H^\infty(\mathbb H_0)\cap \mathcal D$, and this identification is isometric. Since $\frac{K_{\mf b,\mf n}}{K_\mf1}$ is a positive semi-definite kernel, by ~\cite[Corollary~3.5]{Hartz_1}, 
the desired conclusion follows.
\end{proof}	

We conclude the section by revisiting an example of a CNP kernel that admits the kernel of the Hardy space of Dirichlet series as a CNP factor. 
 \begin{example}[Example~\ref{kappa-m} continued]
For an integer $m \Ge 1$, consider the kernel $K_m$ on $\mathbb H_{\frac{\rho_m}{2}}$ as defined in \eqref{Z_m_ker}. Note that $b_n = d_m(n) \Ge 1$ for all integers $n \Ge 1$. This implies that the condition in \eqref{zeta-fac-suff} is satisfied. Hence, by Corollary~\ref{zeta-factor}, the quotient $\frac{K_m}{K_\mf 1}$ is a CNP kernel and the inclusion $H^\infty(\mathbb H_0)\cap  \mathcal D \subseteq \mathcal M(\clh(K_m))$ holds completely contractively.	
\end{example}

\section{Multiplier varieties associated with CNP Dirichlet series kernels}\label{4}

This section is devoted to determining the multiplier variety associated with CNP kernels of Dirichlet series. Let $d \in \mathbb N \cup \{\infty\}$, and let $(\mf b, \mf n)$ be a pair of weight and frequency data of length $d$. Let the associated CNP kernel $K_{\mf b,\mf n}$ be defined on $\mathbb H_\rho$. 
Then 
\beqn
K_{\mf b, \mf n}(s,u) = \frac{1}{1-\sum_{j=1}^d b_j n_j^{-s-\overline{u}}} = \frac{1}{1-\inp{f_{\mf b, \mf n}(s)}{f_{\mf b, \mf n}(u)}},\quad  s, u \in \mathbb H_\rho,
\eeqn
where the map  $f_{\mf b, \mf n}:\mathbb H_\rho \rar \mathbb B_d$ is defined by 
\beq
\label{f-at-s}
f_{\mf b, \mf n}(s) =(\sqrt{b_j} n_j^{-s})_{j=1}^d, \quad s \in \mathbb H_\rho.
\eeq
Recall that the smallest multiplier variety associated with $K_{\mf b,\mf n}$ is given by 
\beq
\label{def-V}
V = \{\mf z \in \mathbb B_d : g(\mf z)=0\ \text{for all}\ g\in \mathcal M_d\ \text{such that}\ g|_{f_{\mf b, \mf n}(\mathbb H_\rho)}\equiv 0\},
\eeq
where $\mathcal M_d$ is the multiplier algebra of the Drury-Arveson space $H^2_d$ over $\mathbb B_d$. 
Observe that $f_{\mf b, \mf n}(\mathbb H_\rho) \subseteq V \subseteq \mathbb B_d.$ We show below that $0\in V$.
The following notations will be in use: for the set of all non-negative integers $\mathbb Z_{+}$ and $d\in\mathbb N$, $\mathbb Z_{+}^d$ denotes the $d$-fold cartesian product of $\mathbb Z_{+}$; when $d=\infty$, 
\[
\mathbb Z_{+}^d:=\{(n_i)_{i\Ge 1}: n_i\in \mathbb Z_{+}, \text{ and}~~n_i = 0~~ \text{for all}~ i ~\text {sufficiently large}\}.
\]


\begin{remark}
  \label{0-in-V}  
Let $V$ be the multiplier variety associated with $K_{\mf b, \mf n}$. Then $0 \in V.$ Indeed, let $g\in \mathcal M_d$ be such that $g|_{f_{\mf b, \mf n}(\mathbb H_\rho)}\equiv0$, where $f_{\mf b, \mf n}$ is given by \eqref{f-at-s}. Then $g \in H_d^2$, and therefore it admits a power series expansion:  
 	\beq
 	\label{g-power}
 	g(\mf z)=\sum_{\alpha \in \mathbb Z_+^d}c_\alpha \mf z^\alpha, \quad \mf z \in \mathbb B_d. 
 	\eeq
 Since $f_{\mf b, \mf n}(\mathbb H_\rho) \subseteq \mathbb B_d$ and $g|_{f_{\mf b, \mf n}(\mathbb H_\rho)}\equiv0$, evaluating at $f_{\mf b, \mf n}(s)$ we obtain
  \beqn
 g(f_{\mf b, \mf n}(s)) \overset{\eqref{g-power}}= \sum\limits_{\alpha \in \mathbb Z_+^d}c_\alpha \prod\limits_{j=1}^d b_{j}^{\frac{\alpha_j}{2}} \Big(\prod\limits_{j=1}^d n_j^{\alpha_j}\Big)^{-s} = 0, \quad s \in\mathbb H_\rho.
 \eeqn
By the uniqueness of Dirichlet series (see Proposition~\ref{uniq-dir}), the constant term must vanish, that is, $c_0 = 0$. Hence
$g( 0) = c_0 = 0 $, which proves $0\in V$.	
\end{remark}
Let us now analyze the multiplier variety for the simplest case $d =1$. Fix an integer $n \Ge 2,$  and set $\mf b = (b)$ and $\mf n = (n)$. Then the associated kernel is given by
\beqn
K_{\mf b, \mf n}(s, u) = \frac{1}{1- bn^{-s-\overline{u}}},\quad  s, u \in \mathbb H_\rho,
\eeqn
for some $\rho\in\mathbb R$. Note that $\mathcal M_1= H^\infty(\mathbb D)$, as $H^2_1$ is the Hardy space over the unit disc $\mathbb D$. Since
\beqn
f_{\mf b, \mf n}(\mathbb H_\rho) = \{\sqrt{b}n^{-s} : s\in \mathbb H_\rho\},
\eeqn
 any function $g\in H^{\infty}(\mathbb D)$ that vanishes on $\{\sqrt{b}n^{-s} : s\in \mathbb H_\rho\}$ must be identically zero. It then follows that 
$V = \mathbb D.$ Thus, it is essential to consider the case $d>1$. In such a setting, we analyze the frequency data $\mf n=(n_j)_{j=1}^d$ by distinguishing two following scenarios:
\begin{itemize}
\item 
The independent case, where $\{\ln n_j: j=1,\dots, d\}$ is a linearly independent set over $\mathbb Q$. 
\end{itemize}

\begin{itemize}
\item 
The dependent case, where the set $\{\ln n_j: j=1,\dots, d\}$ is linearly dependent over $\mathbb Q$.
\end{itemize}
These two cases are considered separately in the following two subsections.

\subsection{Independent case} In \cite[Theorem~41]{MS}, under the assumption $\sum_{n=1}^d b_n = 1,$ the authors characterized all CNP Dirichlet series kernels for which the associated multiplier variety $V$ coincides with the unit ball $\mathbb B_d.$ However, this result remains valid even for a more general class of Dirichlet series kernels, as given below.

 \begin{theorem}\label{ind-case-variety}
 Let $d \in \ntwo \cup \{\infty\}$, and let $(\mf b, \mf n= (n_j)_{j=1}^d)$ be a pair of weight and frequency data of length $d$. 
 Then the multiplier variety $V$ associated with the kernel $K_{\mf b, \mf n}$ is equal to $\mathbb B_d$ if and only if the set $\{\ln n_j: j=1, \dots , d\}$ is linearly independent over $\mathbb Q$.
\end{theorem}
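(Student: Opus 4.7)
The plan is to argue directly from the definition of the multiplier variety in \eqref{def-V}: namely, $V = \mathbb B_d$ if and only if every $g \in \mathcal M(H_d^2)$ with $g|_{f_{\mf b,\mf n}(\mathbb H_0)} \equiv 0$ must vanish identically on $\mathbb B_d$. I would prove the two implications separately.

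For the direction ``linear independence $\Rightarrow V = \mathbb B_d$'', I would fix a multiplier $g \in \mathcal M(H_d^2) \subseteq H_d^2$ satisfying $g \circ f_{\mf b,\mf n} \equiv 0$, and expand it as a power series $g(\mf z) = \sum_{\alpha \in \mathbb Z_+^d} c_\alpha \mf z^\alpha$. Since $g$ is holomorphic on $\mathbb B_d$, this series converges absolutely on a neighborhood of the origin; and since $\|f_{\mf b,\mf n}(s)\|^2 = \sum_j b_j n_j^{-2\mathrm{Re}(s)} \to 0$ as $\mathrm{Re}(s) \to \infty$, one may rearrange
$$
g(f_{\mf b,\mf n}(s)) = \sum_{\alpha \in \mathbb Z_+^d} c_\alpha \prod_{j} b_j^{\alpha_j/2} \Big(\prod_{j} n_j^{\alpha_j}\Big)^{-s}
$$
for $\mathrm{Re}(s)$ sufficiently large. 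The rational linear independence of $\{\ln n_j\}$ makes the map $\alpha \mapsto N_\alpha := \prod_j n_j^{\alpha_j}$ injective, so grouping terms by $N_\alpha$ yields a genuine Dirichlet series in $s$ which vanishes on a right half-plane. By the uniqueness Proposition~\ref{uniq-dir}, every coefficient $c_\alpha \prod_j b_j^{\alpha_j/2}$ is zero; since $b_j > 0$, this forces $c_\alpha = 0$ for every $\alpha$, hence $g \equiv 0$ on $\mathbb B_d$, and so $V = \mathbb B_d$.

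For the converse ``$V = \mathbb B_d \Rightarrow$ linear independence'', I would argue by contrapositive, exhibiting a nonzero polynomial in $\mathcal M(H_d^2)$ vanishing on $f_{\mf b,\mf n}(\mathbb H_0)$. If the set $\{\ln n_j\}$ is linearly dependent over $\mathbb Q$, choose finitely many indices and nonzero integers $\beta_j$ with $\sum_j \beta_j \ln n_j = 0$; partition these indices into $J_1 = \{j : \beta_j > 0\}$ and $J_2 = \{j : \beta_j < 0\}$, set $\gamma_j = -\beta_j$ for $j \in J_2$, so that $\prod_{j \in J_1} n_j^{\beta_j} = \prod_{j \in J_2} n_j^{\gamma_j}$. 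Then the polynomial
$$
q(\mf z) = \Big(\prod_{j \in J_2} b_j^{\gamma_j/2}\Big) \prod_{j \in J_1} z_j^{\beta_j} - \Big(\prod_{j \in J_1} b_j^{\beta_j/2}\Big) \prod_{j \in J_2} z_j^{\gamma_j}
$$
satisfies $q(f_{\mf b,\mf n}(s)) = 0$ for every $s \in \mathbb H_0$ by direct substitution, while being a nonzero polynomial (set $z_j = 0$ for $j \in J_2$ and pick any nonzero value for some $j \in J_1$). As $q$ involves only finitely many coordinates and coordinate functions are contractive multipliers of $H_d^2$, it follows that $q \in \mathcal M(H_d^2)$ even when $d = \infty$, so $q$ witnesses $V \subsetneq \mathbb B_d$.

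The main obstacle I anticipate is justifying the rearrangement and the convergence of the resulting Dirichlet series in the forward implication. One must verify that $\sum_\alpha |c_\alpha| \prod_j b_j^{\alpha_j/2} N_\alpha^{-\sigma}$ converges on some right half-plane $\mathrm{Re}(s) > \sigma_0$, so that the unordered sum over $\alpha$ can be legitimately regrouped by the value of $N_\alpha$ and treated as a classical Dirichlet series to which Proposition~\ref{uniq-dir} applies. This is handled by noting that the holomorphy of $g$ on $\mathbb B_d$ yields absolute convergence of its power series on any polydisc contained in $\mathbb B_d$, while $f_{\mf b,\mf n}(s)$ can be forced into an arbitrarily small polydisc around $0$ by taking $\mathrm{Re}(s)$ large; this makes both the rearrangement and the convergence automatic.
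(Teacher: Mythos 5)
Your proposal is correct. Your proof of the direction ``linear independence $\Rightarrow V = \mathbb B_d$'' is exactly the argument the paper gives: expand $g\in\mathcal M(H_d^2)$ as a power series, substitute $f_{\mf b,\mf n}(s)$, use the injectivity of $\alpha\mapsto \prod_j n_j^{\alpha_j}$ (the paper isolates this as Lemma~\ref{unique-term}), and invoke uniqueness of Dirichlet series (Proposition~\ref{uniq-dir}). You are somewhat more careful than the paper about justifying the rearrangement into a Dirichlet series, but that care is warranted and your Cauchy--Schwarz-style justification goes through.

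Where you genuinely diverge is the other implication, ``$V = \mathbb B_d \Rightarrow$ linear independence.'' Here the paper simply refers the reader to the argument of \cite[Theorem~41]{MS}, which goes through the maximum modulus principle on the polydisc and Kronecker's theorem. You instead argue by contrapositive and exhibit an explicit witness: from a rational dependence $\sum_j \beta_j\ln n_j = 0$ you split the indices into $J_1=\{\beta_j>0\}$, $J_2=\{\beta_j<0\}$ and form the two-monomial polynomial
\[
q(\mf z) = \Big(\prod_{j\in J_2} b_j^{\gamma_j/2}\Big)\prod_{j\in J_1} z_j^{\beta_j} - \Big(\prod_{j\in J_1} b_j^{\beta_j/2}\Big)\prod_{j\in J_2} z_j^{\gamma_j},
\]
which vanishes on $f_{\mf b,\mf n}(\mathbb H_0)$ but is nonzero (the two monomials are distinct since $J_1$ and $J_2$ are disjoint and nonempty, and both coefficients are nonzero). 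This is the cleaner route: it is self-contained, elementary, and involves only the uniqueness of Dirichlet series rather than Kronecker's theorem or the maximum principle. In fact $q$ is nothing but the polynomial $q_J$ of \eqref{pJ} that appears later in the paper, and your construction is precisely the easy inclusion $V\subseteq \mathbb B_d\cap Z(q_J)$ in the proof of Theorem~\ref{variety-gen}. So your proposal recovers this direction without appealing to \cite{MS} at all, which is an improvement in self-containment; the paper could have used the same observation here rather than deferring to \cite{MS}.

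One small slip: when showing $q$ is not identically zero you write ``set $z_j=0$ for $j\in J_2$ and pick any nonzero value for \emph{some} $j\in J_1$'' --- you need \emph{all} $z_j$ with $j\in J_1$ nonzero for the surviving monomial not to vanish. The simplest fix is to note directly that $q$ is a linear combination of two distinct monomials with nonzero coefficients. This does not affect the substance of the argument.
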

 
 In \cite[Theorem~41]{MS}, the proof of the converse direction (assuming $\sum_{j=1}^d b_{j} =  1$) relies on the maximum modulus principle on the polydisc and Kronecker's theorem from Ergodic theory. While one can adapt those arguments to prove Theorem~\ref{ind-case-variety}, we present here a shorter and more direct proof of the converse direction, using the uniqueness property of Dirichlet series and the following lemma.

 \begin{lemma}\label{unique-term}	
Let $d \in \ntwo \cup \{\infty\},$ and let $(n_j)_{j=1}^d$ be a $d$-tuple of distinct elements in $\ntwo$. Then the set $\{\ln n_j: j=1,\dots, d\}$ is linearly independent over $\mathbb Q$ if and only if for all $d$-tuple of rational numbers $(\gamma_j)_{j=1}^d, (\beta_j)_{j=1}^d $ with only finitely many non-zero terms,
 	 \beqn
 	 \prod_{j=1}^d n_j^{\gamma_j} = \prod_{j=1}^d n_j^{\beta_j} \implies \gamma_j = \beta_j \text{ for all } j.
 	 \eeqn 
 \end{lemma}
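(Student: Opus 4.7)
The plan is to prove both directions by converting multiplicative relations among the $n_j$'s into linear relations among the $\ln n_j$'s via the logarithm (and conversely by exponentiation). Throughout, I would use the crucial fact that any tuple in $\mathbb Q_+^d$ has only finitely many nonzero entries (by the very definition of $X^\infty$ given in the Notations table), so the products $\prod_{j=1}^d n_j^{\gamma_j}$ are really finite products, and all manipulations stay algebraic.

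For the forward direction, I would assume $\{\ln n_j : j=1,\dots, d\}$ is linearly independent over $\mathbb Q$ and take $(\gamma_j)_{j=1}^d, (\beta_j)_{j=1}^d \in \mathbb Q_+^d$ with $\prod_{j=1}^d n_j^{\gamma_j} = \prod_{j=1}^d n_j^{\beta_j}$. Applying $\ln$ (which is well-defined since both sides are positive real numbers) yields
\[
\sum_{j=1}^d (\gamma_j - \beta_j)\ln n_j = 0,
\]
a finite rational linear combination. Linear independence over $\mathbb Q$ then forces $\gamma_j = \beta_j$ for every $j$.

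For the converse, assume the implication holds and let $(c_j)_{j=1}^d$ be a finitely supported rational tuple with $\sum_{j=1}^d c_j \ln n_j = 0$. I would split each coefficient into positive and negative parts by writing $c_j = \gamma_j' - \beta_j'$ with $\gamma_j' = \max(c_j, 0)$ and $\beta_j' = \max(-c_j, 0)$, so that $\gamma_j', \beta_j'$ are non-negative rationals with finite support. Exponentiating the relation $\sum \gamma_j' \ln n_j = \sum \beta_j' \ln n_j$ gives $\prod n_j^{\gamma_j'} = \prod n_j^{\beta_j'}$. If the paper's convention requires strictly positive entries in $\mathbb Q_+^d$, I would shift by choosing a sufficiently large positive rational $M$ and setting $\gamma_j = \gamma_j' + M$, $\beta_j = \beta_j' + M$ for the finitely many indices in the support (and $\gamma_j = \beta_j = 0$ elsewhere); the equality $\prod n_j^{\gamma_j} = \prod n_j^{\beta_j}$ is preserved. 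The hypothesis then yields $\gamma_j = \beta_j$, hence $c_j = 0$ for all $j$, proving linear independence.

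The only subtle point—and the nearest thing to an obstacle—is reconciling the sign conventions with the restricted index set $\mathbb Q_+^d$ (versus $\mathbb Q^d$), which is handled by the shift trick above; everything else reduces to the bijective correspondence between products of $n_j$'s and $\mathbb Q$-linear combinations of the $\ln n_j$'s, together with the finiteness of supports.
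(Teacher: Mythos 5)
Your proof is correct and follows essentially the same route as the paper: apply the logarithm to convert the multiplicative identity into a $\mathbb Q$-linear relation among the $\ln n_j$, and, for the converse, split a vanishing rational combination into its positive and negative parts to produce the two equal products. The shift-by-$M$ adjustment is extra care about the $\mathbb Q_+$ convention that the paper itself silently glosses over (it writes tuples with zero entries and still calls them elements of $\mathbb Q_+^d$), but the underlying argument is the same.
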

 \begin{proof}
 The proof follows from the fact that, $\{\ln n_j: j=1,\dots,d\}$ is linearly dependent over $\mathbb Q$ if and only if there exists rational numbers 
    $c_1, c_2, \ldots, c_d$, with not all zero and only finitely many are non-zero, such that
 	\beqn
 	\sum_{j=1}^d c_j \ln {n_j} = 0\iff \prod_{j=1}^d n_j^{c_j}=1.
 	\eeqn
 \end{proof}

 \begin{proof}[\textbf{Proof of Theorem \ref{ind-case-variety}}]
 The forward implication can be established by arguments identical to those in \cite[Theorem~41]{MS}, and we leave the details to the reader. 
 For the converse, suppose that the set $\{\ln n_j: j=1,\dots, d\}$ is linearly independent over $\mathbb Q$ and the kernel $K_{\mf b, \mf n}$ is defined on $\mathbb H_\rho$. By the definition of multiplier variety, $V \subseteq \mathbb B_d.$ We show that $\mathbb B_d \subseteq V.$ To this end, let $g\in\mathcal{M}_d$ be such that $g|_{f_{\mf b, \mf n}(\mathbb H_\rho)}\equiv0$. Since $g\in H^2_d$, $g$ admits a power series expansion 
 \[g(\mf z)=\sum_{\alpha \in \mathbb Z_+^d}c_\alpha \mf z^\alpha,\quad \mf z \in \mathbb B_d.\]
Evaluating at $f_{\mf b,\mf n}(s)$, we obtain 
\beqn
 	g(f_{\mf b, \mf n}(s)) = \sum_{\alpha \in \mathbb Z_+^d}c_\alpha \prod_{j=1}^d b_{j}^{\frac{\alpha_j}{2}} \Big(\prod_{j=1}^d n_j^{\alpha_j}\Big)^{-s} = 0,\quad s \in\mathbb H_\rho.
 	\eeqn
By Lemma~\ref{unique-term} and the uniqueness of Dirichlet series (Proposition~\ref{uniq-dir}),  $c_\alpha \prod_{j=1}^d b_{j}^{\frac{\alpha_j}{2}}=0$ for all $\alpha \in \mathbb Z_+^d$. Since $b_j > 0$ for all $j$, this implies $c_\alpha = 0$ for all $\alpha \in \mathbb Z_+^d$. Thus, $g\equiv0$ on $\mathbb B_d$, and hence $\mathbb B_d \subseteq V.$ This completes the proof. 
\end{proof}






\subsection{Dependent case} 
In this subsection, our goal is to determine the multiplier variety of the Dirichlet series kernel $K_{\mf b, \mf n}$, when the set $\{\ln n_j: j=1,\dots,d\}$ is linearly dependent over $\mathbb Q$. 
The proof of the main result in this setting is somewhat lengthy and relies on several auxiliary lemmas. We begin with an important proposition, which will play a crucial role in what follows.
To state it, we recall some notation introduced in the Introduction. 


Let $\mf n = (n_j)_{j=1}^d$ be a $d$-tuple of distinct elements in $\ntwo$ such that the set $\{\ln n_j: j=1, \dots, d\}$ is a linearly dependent set over $\mathbb Q$. Recall that $\mathcal J(\mf n)$ denotes the collection
of all subsets $J \subseteq \{1, 2, \ldots, d\}$ such that the set $\{\ln n_j: j \in J\}$ is linearly dependent over $\mathbb Q$, but every proper subset is linearly independent over $\mathbb Q$. Observe that even when $d = \infty,$ each $J \in \mathcal J(\mf n)$ is necessarily a finite set. 
Corresponding to each $J\in\mathcal J(\mf n)$, the following result provides a unique partition and a unique $|J|$-tuple of integers depending on the relations of $\ln n_i$'s.

\begin{proposition}\label{uniq-decom}
    Fix $d \in \ntwo \cup \{\infty\}$, and let $\mf n = (n_j)_{j=1}^d$ be a $d$-tuple of distinct elements in $\ntwo$ such that $\{\ln n_j: j \in J\}$ is a linearly dependent set over $\mathbb Q$. If $J \subseteq \{1,\ldots,d\}$ belongs to $\mathcal J(\mf n)$,
then there exist a unique tuple $(\beta_j)_{j \in J} \in \mathbb N^{|J|}$ satisfying $\gcd(\beta_j)_{j \in J}=1$, and a unique partition $\{J_1, J_2\}$ of $J$ such that
 \beq \label{uniq-rep}
 		\prod_{i \in J_1}	n_i^{\beta_i} = \prod_{i \in J_2} n_i^{\beta_i}.
 \eeq
\end{proposition}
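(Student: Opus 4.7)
My plan is to establish existence and uniqueness separately. The core observation will be that the minimality hypothesis defining $\mathcal J(\mf n)$ forces the $\mathbb Q$-vector space of rational linear dependencies among $\{\ln n_j : j \in J\}$ to be exactly one-dimensional.

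For existence, since $J \in \mathcal J(\mf n)$, there exist rationals $(c_j)_{j \in J}$, not all zero, with $\sum_{j \in J} c_j \ln n_j = 0$. Clearing denominators, I may take $c_j \in \mathbb Z$, and after dividing through by $\gcd(|c_j|)_{j \in J}$ I may further assume this $\gcd$ equals $1$. Every $c_j$ must in fact be nonzero: if $c_{j_0} = 0$ for some $j_0 \in J$, then $\{\ln n_j : j \in J \setminus \{j_0\}\}$ would be linearly dependent over $\mathbb Q$, contradicting the minimality in the definition of $\mathcal J(\mf n)$. Setting $J_1 := \{j \in J : c_j > 0\}$ and $J_2 := \{j \in J : c_j < 0\}$, both sets must be nonempty, for otherwise all $c_j$ would share a sign and the resulting relation $\prod_{j \in J} n_j^{c_j} = 1$ would be impossible with every $n_j \geq 2$. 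Taking $\beta_j := |c_j|$ then yields the identity $\prod_{i \in J_1} n_i^{\beta_i} = \prod_{i \in J_2} n_i^{\beta_i}$ with $\gcd(\beta_j)_{j \in J} = 1$.

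For uniqueness, suppose $(J_1, J_2, (\beta_j))$ and $(J'_1, J'_2, (\beta'_j))$ both satisfy the conclusion. Each furnishes a nontrivial integer dependence relation among $\{\ln n_j : j \in J\}$, with the sign pattern of the coefficients encoding the partition. I will argue that the $\mathbb Q$-vector space $D$ of all rational dependence relations among $\{\ln n_j : j \in J\}$ is one-dimensional: if $\dim_{\mathbb Q} D \geq 2$, one could take a suitable $\mathbb Q$-linear combination of two independent relations whose coefficient at some chosen index $j_0 \in J$ vanishes, producing a nontrivial dependence on $\{\ln n_j : j \in J \setminus \{j_0\}\}$ and contradicting $J \in \mathcal J(\mf n)$. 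Hence the two relations are proportional by some $\lambda \in \mathbb Q^*$. Since the sign of each coefficient determines the block of the partition containing the corresponding index, and $\{J_1, J_2\}$ is unordered, I may assume $\lambda > 0$ with $J_1 = J'_1$ and $J_2 = J'_2$. Then $\beta'_j = \lambda \beta_j$ for every $j \in J$, and the $\gcd = 1$ condition on both tuples of positive integers forces $\lambda = 1$.

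The main (though mild) obstacle will be the bookkeeping around signs and the unordered partition in the uniqueness step: one must verify carefully that swapping $J_1$ and $J_2$ corresponds exactly to negating the dependence relation, so that the reduction to $\lambda > 0$ costs no generality and $(\beta_j)_{j \in J}$ is genuinely unique rather than only unique up to sign.
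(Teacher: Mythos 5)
Your proof is correct, and your uniqueness argument takes a genuinely different and cleaner route than the paper's. The existence step is essentially the same in both: clear denominators, divide by the gcd, observe that minimality of $J$ forces every coefficient to be nonzero, observe that both sign classes must be nonempty since each $n_j \ge 2$, and then split by sign. For uniqueness, the paper (via Lemma~\ref{rep-tuple-uni-full}) proceeds by explicit manipulation: it picks $m_0 \in I_1 \cap \tilde I_1$, solves both relations for $n_{m_0}$, uses linear independence of $\{\ln n_j : j \in J \setminus \{m_0\}\}$ to compare exponents term by term, first to identify the partitions and then, after substitution, to derive $\gamma_i = \gamma_j \beta_i / \beta_j$, followed by a careful divisibility argument exploiting the two $\gcd = 1$ normalizations. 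Your argument compresses all of this into a single linear-algebraic observation: the $\mathbb Q$-space $D$ of rational dependence relations on $\{\ln n_j : j \in J\}$ is exactly one-dimensional, because a two-dimensional $D$ would contain a nonzero vector vanishing at any prescribed $j_0 \in J$, contradicting $J \in \mathcal J(\mf n)$. Proportionality of the two encoded relations is then immediate; the unordered nature of $\{J_1, J_2\}$ absorbs the sign of the scalar, and the $\gcd = 1$ condition on a tuple of positive integers forces the scalar to equal $1$ (write $\lambda = p/q$ in lowest terms: $q$ divides $\gcd(\beta_j) = 1$, and then $\gcd(p\beta_j) = p$ must equal $1$). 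Your version localizes the use of minimality in one clean place rather than spreading it across several exponent comparisons, and it sidesteps the paper's auxiliary reduction $\tilde\gamma_j, \tilde\beta_j$ entirely; the trade-off is that it explicitly invokes the dimension of a vector space where the paper stays with hands-on arithmetic.
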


In Proposition~\ref{uniq-decom}, note that the unique tuple $(\beta_j)_{j \in J} \in \mathbb N^{|J|}$ depends on both $J \in \mathcal J(\mf n)$ and the frequency data $\mf n$.
 We refer to $(\beta_j)_{j \in J}$ and $\{J_1, J_2\}$ as {\it the unique tuple and unique partition associated with $J$}, respectively. Throughout, we reserve the notation $J_1$ and $J_2$ for the unique partition of $J$ satisfying \eqref{uniq-rep}. We also refer to  \eqref{uniq-rep} as {\it the representation associated with $J.$} 
 Using this representation, we define a polynomial $q_{_J}$ associated with $J$ as follows:   
        \beqn
q_{_J}(\mf z) = \prod\limits_{i \in J_1}
 		b_{i}^{\frac{\beta_i}{2}}\prod\limits_{i \in J_2}z_i^{\beta_i}-\prod\limits_{i \in J_2}b_{i}^{\frac{\beta_i}{2}}\prod\limits_{i \in J_1}z_i^{\beta_i},\quad \mf z \in \mathbb C^d.
    \eeqn
We refer to $q_{_J}$ as {\it the polynomial associated with $J$}.

To prove Proposition~\ref{uniq-decom}, we begin with a lemma.

\begin{lemma}
    \label{rep-tuple-uni-full}
Let $d \in \ntwo$, and let $\mf n = (n_j)_{j=1}^d$ be a  $d$-tuple of distinct elements in $\ntwo$ such that the set $\{\ln n_j: j=1, \dots, d\}$ is linearly dependent over $\mathbb Q$, but every proper subset is linearly independent over $\mathbb Q.$ Then, there exists a unique $d$-tuple $(\beta_1,\beta_2, \ldots ,\beta_{d}) \in \mathbb N^{d}$ satisfying $\gcd(\beta_j)_{j=1}^d=1$, and a unique partition $\{I_1, I_2\}$ of $\{1, 2, \ldots, d\}$ such that 
		\beqn
		\prod_{i \in I_{1}}	n_i^{\beta_i} = \prod_{i \in I_{2}} n_i^{\beta_i}.
		\eeqn
\end{lemma}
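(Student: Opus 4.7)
The plan is to prove existence by exhibiting any minimal integer relation among $\ln n_1, \ldots, \ln n_d$, and to prove uniqueness by showing that the $\mathbb{Q}$-space of all such relations is one-dimensional.

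For existence, I would start from the assumed linear dependence: there exist rationals $c_1, \ldots, c_d$, not all zero, with $\sum_{j=1}^d c_j \ln n_j = 0$, and after clearing denominators I may take $c_j \in \mathbb{Z}$. The minimality hypothesis forces every $c_j$ to be nonzero, for if some $c_k = 0$ then $\sum_{j\neq k} c_j \ln n_j = 0$ would be a nontrivial relation on the proper subset $\{\ln n_j : j \neq k\}$, contradicting the standing assumption of linear independence of every proper subset. Setting $I_1 := \{j : c_j > 0\}$ and $I_2 := \{j : c_j < 0\}$, both sets are nonempty (since $\ln n_j > 0$ for all $j$, a one-sided relation would force all $c_j = 0$), and $\{I_1, I_2\}$ is a partition of $\{1,\ldots,d\}$. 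Putting $\beta_i := |c_i|$ and dividing through by $\gcd(\beta_i)_{i=1}^d$ yields the required representation with $\gcd = 1$.

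For uniqueness, the key observation is that
\[
V := \Big\{(c_1,\ldots,c_d)\in \mathbb{Q}^d : \sum_{j=1}^d c_j \ln n_j = 0\Big\}
\]
has $\mathbb{Q}$-dimension exactly $1$. Indeed, the linear functional $\phi:\mathbb{Q}^d\to\mathbb{R}$, $(c_1,\ldots,c_d)\mapsto \sum_j c_j \ln n_j$, has image whose $\mathbb{Q}$-dimension equals the rank of $\{\ln n_1,\ldots,\ln n_d\}$; since any $(d-1)$-subset is independent while the full set is dependent, this rank is exactly $d-1$, so $\dim_\mathbb{Q} V = d - (d-1) = 1$. Consequently $V\cap \mathbb{Z}^d$ is a rank-one $\mathbb{Z}$-module, and its primitive elements (those whose coordinates have $\gcd$ equal to $1$) are precisely $\pm \mf v$ for a single $\mf v \in \mathbb{Z}^d$.

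Given any valid representation $(\{I_1,I_2\},(\beta_i)_{i=1}^d)$, the vector $c_j := \beta_j$ for $j\in I_1$ and $c_j := -\beta_j$ for $j\in I_2$ is a primitive element of $V$. So if two such representations produce vectors $\mf c',\mf c''$, we must have $\mf c'' = \pm \mf c'$. In the $+$ case the positive-coordinate index sets and absolute values agree, giving $I_1' = I_1''$, $I_2' = I_2''$, and $\beta' = \beta''$; in the $-$ case $I_1' = I_2''$, $I_2' = I_1''$, and $\beta' = \beta''$. Either way the \emph{unordered} partition $\{I_1,I_2\}$ and the tuple $(\beta_i)$ coincide, yielding uniqueness. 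The only subtlety I foresee is notational care in distinguishing the unordered partition $\{I_1,I_2\}$ from an ordered pair; once $\dim_{\mathbb{Q}}V = 1$ is made precise, everything follows from elementary $\mathbb{Z}$-module considerations.
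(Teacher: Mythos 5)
Your proposal is correct, and the uniqueness half is proved by a genuinely different and cleaner route than the paper's. The existence part is essentially the same in both: start from a rational dependence relation, clear denominators, observe that minimality forces every coefficient to be nonzero, split the indices by sign, and normalize by the gcd. For uniqueness, the paper argues concretely: it takes a second putative representation $(\{\tilde I_1,\tilde I_2\},(\gamma_j))$, fixes an index $m_0$ common to $I_1$ and $\tilde I_1$, writes $n_{m_0}$ in two ways using the two relations, and then invokes Lemma~\ref{unique-term} (linear independence of $\mathcal S\setminus\{\ln n_{m_0}\}$) to match exponents term by term; this forces first that the two partitions coincide and then, after a gcd argument, that the two tuples coincide. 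You instead observe that the kernel $V=\{c\in\mathbb Q^d:\sum_j c_j\ln n_j=0\}$ of the $\mathbb Q$-linear functional $\phi(c)=\sum_j c_j\ln n_j$ is one-dimensional, because $\operatorname{Im}\phi=\operatorname{span}_{\mathbb Q}\{\ln n_j\}$ has dimension exactly $d-1$ under the hypotheses; hence $V\cap\mathbb Z^d$ is a rank-one free $\mathbb Z$-module whose only primitive generators are $\pm\mathbf v$, and every admissible representation corresponds to such a primitive generator, so it is unique up to sign (which is absorbed by the unordered partition $\{I_1,I_2\}$). Your argument is shorter and conceptually transparent, at the cost of invoking rank--nullity and a bit of $\mathbb Z$-module language; the paper's argument stays within elementary manipulations of exponents, which fits the number-theoretic flavor of the surrounding lemmas but is noticeably longer. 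Both are valid, and each cleanly establishes that the partition is unique only as an unordered pair.
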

\begin{proof}
Let $\mathcal S : =\{\ln n_j:j=1,\dots,d\}$. 
 Since $\mathcal S$ is linearly dependent over $\mathbb Q$ and every proper subset is linearly independent, there exist a partition $\{I_1,I_2\}$ of $\{1,\ldots, d\}$ and a tuple $(\alpha_i)_{i=1}^d$ of positive integers such that
\begin{equation}\label{CHid1}
		\prod_{i \in I_1} n_{i}^{\alpha_{i}} = \prod_{i \in I_2} n_{i}^{\alpha_{i}}.
\end{equation} 
 Let $ k:=\gcd(\alpha_i)_{i=1}^d$, and define $\beta_i=\alpha_i/k$ for all $i$.  Then $\gcd(\beta_i)_{i=1}^d = 1$, and we obtain from ~\eqref{CHid1} that 
 
\beq
	\label{rep1}
    \prod_{i \in I_1} n_{i}^{\beta_{i}} = \prod_{i \in I_2} n_{i}^{\beta_{i}}.
\eeq
To prove the uniqueness,  
suppose there exist another tuple $(\gamma_j)_{j=1}^d \in \mathbb N^d$ satisfying $\gcd(\gamma_j)_{j=1}^d = 1$ and another partition $\{\tilde{I_1}, \tilde{I_2}\}$ of $\{1,\dots,d\}$ such that  
	\beq
	\label{rep2}
	\prod_{i \in \tilde{I}_1}	n_i^{\gamma_i} = \prod_{i \in \tilde{I}_2}	n_i^{\gamma_i}.
	\eeq
Without loss of generality, we assume that $I_1 \cap \tilde{I}_1 \neq \emptyset$; the case $I_1\cap \tilde{I_2}\neq \emptyset$ can be treated analogously. We show below that   
	\beqn
	\text{ $I_1 = \tilde{I}_1,~ I_2 = \tilde{I}_2$ and $\gamma_i = \beta_i$ for all integers $1 \Le i \Le d.$}
	\eeqn
Let $m_0 \in I_1 \cap \tilde{I}_1.$ From \eqref{rep1} and \eqref{rep2}, it follows that
	\beq
    \label{two-reps-equal}
	n_{m_0} = \prod_{i \in I_1 \setminus \{m_0\}}	n_i^{-{\beta^{-1}_{m_0}}\beta_i}
	\prod_{i \in I_2} n_i^{{\beta^{-1}_{m_0}}\beta_i} 
	= 
	\prod_{i \in \tilde{I}_1 \setminus  \{m_0\}}	n_i^{-{\gamma^{-1}_{m_0}}\gamma_i}
	\prod_{i \in \tilde{I}_2}	n_i^{{\gamma^{-1}_{m_0}}\gamma_i}.
	\eeq
Suppose there exists $t \in (I_1 \setminus \{m_0\}) \cap \tilde{I}_2$. Since $\mathcal S \setminus\{\ln n_{m_0}\}$ is rationally independent, it follows from \eqref{two-reps-equal} and Lemma~\ref{unique-term} that 
\[-\frac{\beta_t}{\beta_{m_0}} = \frac{\gamma_t}{\gamma_{m_0}}.\] This is a contradiction as all $\beta_i$ and $\gamma_i$ are positive integers. Therefore,
\[
(I_1 \setminus \{m_0\}) \cap \tilde{I}_2\neq \emptyset,\ \text{ and hence } I_1 \setminus \{m_0\} \subseteq \tilde{I_1} \setminus\{m_0\}.
\]
By a symmetric argument, we also have 
\[
I_2 \cap (\tilde{I}_1 \setminus \{m_0\}) = \emptyset,\ \text{ and hence } \tilde{I_1} \setminus\{m_0\}\subseteq I_1 \setminus \{m_0\}. 
\]
Thus we conclude that $I_1 = \tilde{I_1}$, $I_2 = \tilde{I_2}$ and  
	\beq
	\label{CHid2}
		\prod_{i\in I_1}n_{i}^{\gamma_{i}} = \prod_{i\in I_2}n_{i}^{\gamma_{i}}.
	\eeq
It remains to verify that $\beta_i = \gamma_i$ for all $i$. Fix $j\in I_1$. Substituting the expression for $n_{j}$ from \eqref{rep1} into \eqref{CHid2} gives   
    \beqn
\prod_{i\in I_1\setminus\{j\}}n_{i}^{\gamma_{i}-{\beta^{-1}_{j}}\beta_{i}\gamma_{j}}\cdot\prod_{i\in I_2}n_{i}^{-\gamma_{i}+{\beta^{-1}_{j}}\beta_{i}\gamma_{j}}=1.
    \eeqn
Since $\mathcal S \setminus \{\ln n_{j}\}$ is linearly independent over $\mathbb Q$,
	\beq \label{ratio-equal}
	\gamma_{i}=\frac{\gamma_{j}\beta_{i}}{\beta_{j}}, \ i\in I_1\setminus\{j\}\ \text{ and }\ \gamma_{i}=\frac{\gamma_{j}\beta_{i}}{\beta_{j}},\ i\in I_2.
	\eeq
Let $r= \gcd(\gamma_{j}, \beta_{j})$, and define $\tilde{\gamma}_{j} = \frac{\gamma_{j}}{r}$ and $\tilde{\beta}_{j} = \frac{\beta_{j}}{r}$.  
Then ~\eqref{ratio-equal} can be rewritten as
	\beq \label{ratio-equal-modified}
	\gamma_{i} = \frac{\tilde{\gamma}_{j} \beta_{i}}{\tilde{\beta}_{j}}, \  i\in I_1\setminus\{j\}\ \text{ and }\  \gamma_{i} = \frac{\tilde{\gamma}_{j} \beta_{i}}{\tilde{\beta}_{j}},\ i\in I_2.
	\eeq  
 Since $\gcd(\tilde{\gamma}_{j}, \tilde{\beta}_{j}) =1$, it follows from \eqref{ratio-equal-modified} that $\tilde{\beta}_{j}$ divides $\beta_{i}$ for all $i$. Hence $\tilde{\beta}_{j} = 1$, since $\gcd(\beta_{i})_{i=1}^d = 1.$ Therefore, $\beta_{j}= r$ and  $\gamma_{j}=m\beta_{j}$ for some $m \in \mathbb N.$ It then follows from \eqref{ratio-equal} that $m$ divides $\gamma_i$ for all $i$, which forces $m=1$, since $\gcd(\gamma_i)_{i=1}^d=1.$ Therefore, $\gamma_{j}=\beta_{j}$, and substituting back into \eqref{ratio-equal} yields $\gamma_i=\beta_i$ for all $i$. This completes the proof.
\end{proof}

We now prove Proposition~\ref{uniq-decom} as an immediate consequence of Lemma~\ref{rep-tuple-uni-full}.

\begin{proof}[Proof of Proposition~\ref{uniq-decom}]
 Fix $J \in \mathcal J(\mf n)$. By definition, the set $\{\ln n_j : j \in J\}$ is linearly dependent over $\mathbb Q$ and every proper subset is linearly independent. Since $J$ is a finite set, 
Lemma~\ref{rep-tuple-uni-full} applies, and we obtain a unique tuple $(\beta_j)_{j \in J} \in \mathbb N^{|J|}$ with $\gcd(\beta_j)_{j \in J}=1$, and a unique partition $\{J_1, J_2\}$ of $J$ such that 
\beqn
		\prod_{i \in J_1}	n_i^{\beta_i} = \prod_{i \in J_2} n_i^{\beta_i}.
\eeqn     
This completes the proof.
\end{proof}

We are now ready to describe the main result of this section, which determines the multiplier variety associated with CNP Dirichlet series kernels arising from rationally log-dependent frequency data.

 \begin{theorem}\label{variety-gen}
Let $d \in \ntwo \cup \{\infty\}$, and let $(\mf b = (b_j)_{j=1}^d, \mf n= (n_j)_{j=1}^d)$ be a pair of weight and frequency data of length $d$ 
such that $\{\ln n_j: j=1,\dots,d\}$ is linearly dependent over $\mathbb Q$. 
Then, the multiplier variety $V$ associated with $K_{\mf b, \mf n}$ is given by
\beqn
V = \mathbb B_d \cap \bigcap_{J \in \mathcal J(\mf n)} Z(q_{_J}),
 \eeqn
where for each $J\in \mathcal J(\mf n)$ the polynomial
 \begin{equation*}
 q_{_J} (\mf z) = \prod\limits_{i \in J_1}
 b_{i}^{\frac{\beta_i}{2}}\prod\limits_{i \in J_2}z_i^{\beta_i}-\prod\limits_{i \in J_2}b_{i}^{\frac{\beta_i}{2}}\prod\limits_{i \in J_1}z_i^{\beta_i},\quad \mf z = (z_i)_{i=1}^d \in \mathbb C^d,
\end{equation*}
is determined by the unique partition $\{J_1,J_2\}$ of $J$ and the unique $|J|$-tuple $(\beta_j)_{j \in J} \in \mathbb N^{|J|}$ with $\gcd(\beta_j)_{j \in J}=1$, as described in Proposition~\ref{uniq-decom}, and $Z(q_{_J})$ denotes the zero set of $q_{_J}$.
 \end{theorem}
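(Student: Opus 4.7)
The plan is to prove the double inclusion $V \subseteq \mathbb{B}_d \cap \bigcap_{J \in \mathcal{J}(\mf n)} Z(q_{_J}) \subseteq V$. The forward inclusion is the easy direction. For each $J \in \mathcal{J}(\mf n)$ (necessarily finite), $q_{_J}$ is a polynomial in finitely many coordinates and hence belongs to $\mathcal{M}_d$. Evaluating at $f_{\mf b, \mf n}(s)$ and simplifying, one obtains
\[
q_{_J}\bigl(f_{\mf b, \mf n}(s)\bigr) = \Bigl(\prod_{i \in J} b_i^{\beta_i/2}\Bigr)\Bigl(N_{J_2}^{-s} - N_{J_1}^{-s}\Bigr), \qquad N_{J_k} := \prod_{i \in J_k} n_i^{\beta_i}.
\]
By Proposition~\ref{uniq-decom}, $N_{J_1} = N_{J_2}$, so $q_{_J}$ vanishes on $f_{\mf b, \mf n}(\mathbb{H}_0)$. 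The definition~\eqref{def-V} of $V$ then forces $V \subseteq Z(q_{_J})$.

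For the reverse inclusion, fix $\mf z \in \mathbb{B}_d$ with $q_{_J}(\mf z) = 0$ for all $J \in \mathcal{J}(\mf n)$, and an arbitrary $g \in \mathcal{M}_d$ vanishing on $f_{\mf b, \mf n}(\mathbb{H}_0)$; I must show $g(\mf z) = 0$. Expanding $g(\mf w) = \sum_\alpha c_\alpha \mf w^\alpha$, substituting $\mf w = f_{\mf b, \mf n}(s)$, grouping by $N = \prod_j n_j^{\alpha_j}$, and applying uniqueness of Dirichlet series (Proposition~\ref{uniq-dir}) yields
\[
\sum_{\alpha:\ \prod_j n_j^{\alpha_j} = N} c_\alpha \prod_j b_j^{\alpha_j/2} = 0 \qquad \text{for every } N.
\]
Setting $u_j := z_j/\sqrt{b_j}$, the hypothesis $q_{_J}(\mf z) = 0$ becomes $\prod_{i \in J_1} u_i^{\beta_i} = \prod_{i \in J_2} u_i^{\beta_i}$, and $g(\mf z) = \sum_\alpha c_\alpha \bigl(\prod_j b_j^{\alpha_j/2}\bigr) \prod_j u_j^{\alpha_j}$. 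It therefore suffices to prove the combinatorial claim that, under the circuit hypothesis on $\mf u$, one has $\prod_j u_j^{\alpha_j} = \prod_j u_j^{\alpha'_j}$ whenever $\prod_j n_j^{\alpha_j} = \prod_j n_j^{\alpha'_j}$; granting this, the common monomial in $\mf u$ factors out within each $N$-class, leaving the bracketed coefficient above, which vanishes.

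To prove this claim, first cancel common monomial factors to reduce to $\alpha, \alpha' \in \mathbb{Z}_+^d$ with disjoint supports, and induct on $|\alpha| + |\alpha'|$. Let $I := \mathrm{supp}(\alpha) \cup \mathrm{supp}(\alpha')$. If $I$ is independent then $\alpha = \alpha' = 0$ trivially. If $I \in \mathcal{J}(\mf n)$, then by Proposition~\ref{uniq-decom} the lattice of integer relations among $\{\ln n_j : j \in I\}$ is of rank one, generated by the vector equal to $\beta_i$ on $I_1$ and $-\beta_i$ on $I_2$; disjoint-support positivity then forces $\alpha_i = k\beta_i$ for $i \in I_1$ and $0$ otherwise, and $\alpha'_i = k\beta_i$ for $i \in I_2$ and $0$ otherwise, for some $k \in \mathbb{Z}_+$ (swap $\alpha, \alpha'$ if needed). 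The hypothesis on $\mf u$ then gives $\prod_j u_j^{\alpha_j} = \bigl(\prod_{i \in I_1} u_i^{\beta_i}\bigr)^k = \bigl(\prod_{i \in I_2} u_i^{\beta_i}\bigr)^k = \prod_j u_j^{\alpha'_j}$. Otherwise, some $J \in \mathcal{J}(\mf n)$ is properly contained in $I$; I choose $J$ so that the $J$-relation can be applied to transfer powers between $J_1$ and $J_2$ in $\alpha$ (or in $\alpha'$), producing $\tilde\alpha$ with $\prod_j n_j^{\tilde\alpha_j} = \prod_j n_j^{\alpha_j}$, $\prod_j u_j^{\tilde\alpha_j} = \prod_j u_j^{\alpha_j}$, and strictly smaller total, to which the induction hypothesis applies.

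The main obstacle will be the zero-coordinate case, since the multiplicative rewriting above implicitly uses $u_j \ne 0$. To handle it, I would first deduce from $q_{_J}(\mf z) = 0$ that $S := \{j : z_j = 0\}$ satisfies the structural constraint $J_1 \cap S \ne \emptyset \iff J_2 \cap S \ne \emptyset$ for every $J \in \mathcal{J}(\mf n)$; then, by a parallel induction tracking supports through the same circuit-reduction template, show that for $\alpha, \alpha'$ with disjoint supports and $\prod_j n_j^{\alpha_j} = \prod_j n_j^{\alpha'_j}$, one has $\mathrm{supp}(\alpha) \cap S = \emptyset$ if and only if $\mathrm{supp}(\alpha') \cap S = \emptyset$. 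Consequently the monomials $\prod_j u_j^{\alpha_j}$ and $\prod_j u_j^{\alpha'_j}$ are simultaneously zero (trivial case) or simultaneously nonzero (in which case the previous paragraph applies). Verifying this dichotomy carefully, and ensuring the circuit-exchange step of the inductive reduction produces a non-negative multi-index, is likely the most delicate part of the proof.
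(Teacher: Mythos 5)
Your outline is in the right spirit, and the easy inclusion $V \subseteq \mathbb{B}_d \cap \bigcap_J Z(q_{_J})$ is essentially identical to the paper's. Your reduction of the hard inclusion to the single combinatorial claim---that $\prod_j u_j^{\alpha_j}$ depends only on $\prod_j n_j^{\alpha_j}$ when $\mf u = \mf z/\sqrt{\mf b}$ satisfies the circuit relations---is also a clean and correct framing: the paper's proof implicitly needs exactly this (its grouping by the map $\psi$ into $N$-classes only makes sense if the monomials $\mf u^\alpha$ are constant on those classes). So the skeleton is sound.

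The problem is that the skeleton is where your proposal stops. The two places you flag as ``delicate'' are precisely the parts that constitute the actual proof in the paper, and neither is filled in.

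\emph{(1) The circuit-exchange induction.} Your plan is to subtract a circuit vector $v^J$ ($J\subsetneq I$) from $\alpha$ to reduce $|\alpha|+|\alpha'|$. But $\tilde\alpha=\alpha+kv^J$ need not be non-negative for any useful $k$, and even when it is, $|\tilde\alpha|-|\alpha|=k\sum_i v^J_i$ can be zero or positive (many circuits have $\sum_{J_1}\beta_i=\sum_{J_2}\beta_i$), so the measure need not decrease; one then has to re-cancel common factors and argue that the overall procedure terminates. None of this is verified. The claim you are trying to establish is, in the all-coordinates-nonzero case, equivalent to the nontrivial lattice-theoretic fact that the circuit vectors $\{v^J: J\in\mathcal J(\mf n)\}$ generate the full relation lattice $L=\{\gamma\in\mathbb Z^d:\prod n_j^{\gamma_j}=1\}$---not merely span it over $\mathbb Q$. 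This is not obvious and is not something you can wave at. The paper avoids this route altogether: it fixes a $\mathbb Q$-basis $\mathcal B\subseteq\{i:w_i\neq0\}$ of $\mathrm{span}_{\mathbb Q}\{\ln n_i\}$, uses Lemma~\ref{existence-J} to express each non-basis $w_k$ via the fundamental circuit $I_k\cup\{k\}$, and organizes the sum through the map $\psi$ recording basis-coordinates. That device sidesteps your termination problem entirely.

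\emph{(2) The zero-coordinate case.} Your structural constraint on $S=\{j:z_j=0\}$ (that $J_1\cap S\neq\emptyset\iff J_2\cap S\neq\emptyset$) is correct, but the ``parallel induction tracking supports'' that you invoke to conclude $\mathrm{supp}(\alpha)\cap S=\emptyset\iff\mathrm{supp}(\alpha')\cap S=\emptyset$ is not given, and it is not a routine adaptation of (1): it needs to produce, from an arbitrary multiplicative relation between $\alpha$ and $\alpha'$, a member of $\mathcal J(\mf n)$ sitting in a controlled position relative to $S$. This is exactly what the paper's Lemma~\ref{exis-J} does, and its proof is a genuine strong induction on $q$ that selects the index maximizing $\beta_j/\gamma_j$ and carefully rearranges exponents to keep all exponents non-negative. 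That argument, plus the resulting Claim~\eqref{imp-obs-psi} about $\psi$, is the substantive core of the paper's hard direction, and nothing in your proposal supplies an analogue.

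In short: correct reduction, but the two hard steps you yourself identify are stated as plans, and they are where the real mathematics lives.
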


\begin{remark} 

Consider $\mf b = (b_1, b_2, b_3) \in (0, \infty)^3$ and $\mf n = (n_1, n_2, n_3) \in \ntwo^3$ such that $\{\ln n_j: j=1,2, 3\}$ is linearly dependent over $\mathbb Q$. Define $\mf b' = (b_2, b_1, b_3)$ and $\mf n' = (n_2, n_1, n_3)$. Then $K_{\mf b, \mf n}= K_{\mf b', \mf n'}$. However, their associated multiplier varieties may differ. An easy application of Theorem~\ref{variety-gen} shows that either $V=V'$ or $\phi(V)= V'$, where $\phi: \mathbb B_3 \rar\ \mathbb B_3$ is the coordinate permutation automorphism defined by $\phi(z_1, z_2, z_3) = (z_2, z_1, z_3)$. 

More generally, $K_{\mf b, \mf n}= K_{\sigma(\mf b), \sigma(\mf n)}$ for any permutation $\sigma$ of $d$-variables.
But, the multiplier variety depends on the ordering of the tuples $\mf b$ and $\mf n$, and a change in the order by a permutation may lead to a different variety. However, such varieties are biholomorphic via a coordinate permutation automorphism of $\mathbb B_d$. This justifies our terminology of referring to the multiplier variety associated with a CNP Dirichlet series kernel $K_{\mf b, \mf n}$, since it is well defined up to such biholomorphic equivalence.  
\end{remark}

The proof of Theorem~\ref{variety-gen} based on a couple of preparatory lemmas.  
\begin{lemma}
\label{existence-J}
Fix $d \in \ntwo \cup \{\infty\}$. Let $\mf n = (n_i)_{i=1}^d$ be a $d$-tuple of distinct integer in $\ntwo$ such that $\{\ln n_i: i=1,\dots,d\}$ is a linearly dependent set over $\mathbb Q$. Let $\mathcal B \subseteq \{1,\dots, d\}$ be such that $\{\ln n_{i} : i \in \mathcal B\}$ forms a basis of $\textup{span}_\mathbb Q\{\ln n_i: i=1,\dots,d\}$.
Then for every integer $k \in \{1, 2, \ldots, d\} \setminus \mathcal B$,
there exist a unique finite subset $I_k$ of $\mathcal B$ 
and a unique $| I_k|$-tuple $(r_i(k))_{i \in I_k}$ of non-zero rational numbers such that $I_k\cup\{k\}\in \mathcal J(\mf n)$ and 
        \beqn
   n_k= \prod_{i\in I_k}n_i^{r_i (k)}.
        \eeqn
 Furthermore, if $q_{_J}$ is the polynomial associated with $J = I_k \cup \{k\}$, then for any point $(w_i)_{i=1}^d\in Z(q_{_J})$ with $w_{i} \neq 0$ for all $i\in I_k$, we have
\beq
\label{wi-k}
w_{k}&=& \sqrt{b_{k}} \prod_{i\in I_k}b_{i}^{-\frac{r_{i}(k)}{2}}w_{i}^{r_{i}(k)},\quad  k \in \{1, 2, \ldots, d\} \setminus \mathcal B.
\eeq
\end{lemma}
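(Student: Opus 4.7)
My plan is to use the basis property of $\{\ln n_i : i \in \mathcal{B}\}$ to extract the unique representation of $\ln n_k$, verify that this representation produces a minimally dependent set as required by $\mathcal{J}(\mathbf n)$, and finally match the output with the partition-and-tuple data supplied by Proposition~\ref{uniq-decom} to derive~\eqref{wi-k} from the relation $q_{_J}(\mathbf w) = 0$.

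First, since $\{\ln n_i : i \in \mathcal B\}$ is a $\mathbb Q$-basis of $\operatorname{span}_{\mathbb Q}\{\ln n_j : j=1,\dots,d\}$, every $\ln n_k$ with $k\notin\mathcal B$ admits a unique expansion $\ln n_k=\sum_{i\in\mathcal B}r_i(k)\ln n_i$ with only finitely many nonzero rational coefficients. Setting $I_k:=\{i\in\mathcal B:r_i(k)\neq 0\}$, one sees that $I_k$ is a finite subset of $\mathcal B$ with $n_k=\prod_{i\in I_k}n_i^{r_i(k)}$, and uniqueness of the basis expansion gives uniqueness of $I_k$ and $(r_i(k))_{i\in I_k}$. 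To verify $I_k\cup\{k\}\in\mathcal J(\mathbf n)$, I note that linear dependence is immediate; any proper subset of $I_k\cup\{k\}$ not containing $k$ sits inside $\mathcal B$ and is therefore independent, while if $\{k\}\cup I'$ with $I'\subsetneq I_k$ were linearly dependent, the coefficient of $\ln n_k$ in any nontrivial relation would be nonzero (since $I'\subseteq\mathcal B$ is independent), producing a second expansion of $\ln n_k$ in the basis and contradicting the definition of $I_k$.

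Next, I translate $n_k=\prod_{i\in I_k}n_i^{r_i(k)}$ into the multiplicative form required by Proposition~\ref{uniq-decom}. Split $I_k=I_k^+\sqcup I_k^-$ according to the sign of $r_i(k)$, and clear denominators by a positive integer $N$ with $Nr_i(k)\in\mathbb Z$ for all $i\in I_k$, so that
\[
n_k^{N}\prod_{i\in I_k^-}n_i^{-Nr_i(k)}\;=\;\prod_{i\in I_k^+}n_i^{Nr_i(k)}.
\]
Dividing all exponents by their common $\gcd$ matches this with the unique representation supplied by Proposition~\ref{uniq-decom} for $J=I_k\cup\{k\}$, yielding $J_1=\{k\}\cup I_k^-$, $J_2=I_k^+$, and hence $r_i(k)=-\beta_i/\beta_k$ for $i\in I_k^-$ while $r_i(k)=\beta_i/\beta_k$ for $i\in I_k^+$.

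Finally, substituting this identification into the defining relation $q_{_J}(\mathbf w)=0$ and isolating $w_k^{\beta_k}$ (which is permissible because $w_i\neq 0$ for all $i\in I_k$) gives
\[
w_k^{\beta_k}\;=\;b_k^{\beta_k/2}\prod_{i\in I_k} b_i^{-r_i(k)\beta_k/2}\,w_i^{r_i(k)\beta_k},
\]
from which~\eqref{wi-k} follows after extracting a $\beta_k$-th root. The main technical point to handle is keeping the fractional-power branches of $w_i^{r_i(k)}$ consistent in this final step; this is unambiguous because each exponent $r_i(k)\beta_k=\pm\beta_i$ is an integer, so~\eqref{wi-k} is to be read as a compatible choice of $\beta_k$-th roots for the unambiguous polynomial identity above.
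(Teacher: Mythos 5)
Your proof is correct and follows essentially the same route as the paper: expand $\ln n_k$ uniquely in the basis $\{\ln n_i : i\in\mathcal B\}$, read off $I_k$ and $(r_i(k))$, identify the associated element of $\mathcal J(\mf n)$ with partition $J_1=\{k\}\cup I_k^-$, $J_2=I_k^+$, and isolate $w_k^{\beta_k}$ from $q_{_J}(\mf w)=0$. You fill in two details the paper leaves implicit — the verification that every proper subset of $I_k\cup\{k\}$ is $\mathbb Q$-independent, and the observation that \eqref{wi-k} is really a $\beta_k$-th root of the unambiguous polynomial identity for $w_k^{\beta_k}$ — both of which are accurate and worth noting.
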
 
\begin{proof}
Let $\mathcal T: = \{\ln n_{i}: i\in\mathcal B\}$. Fix $k\in \{1, 2, \ldots, d\} \setminus \mathcal B$. Since $\mathcal T$ is a basis, there exist unique disjoint finite subsets $\mathcal B_1, \mathcal B_2\subseteq \mathcal B$ and a unique tuple of positive integers $(\beta_i)_{i\in \mathcal B_1\cup \mathcal B_2\cup \{k\}}$ with $\gcd(\beta_i)_{i\in \mathcal B_1\cup \mathcal B_2\cup \{k\}}=1$, such that 
\beq\label{ni-rep-1}
n_k^{\beta_k}\prod_{i\in \mathcal B_1}n_i^{\beta_i}= \prod_{i\in \mathcal B_2}n_i^{\beta_i}.
\eeq
Let $I_k := \mathcal{B}_1 \cup \mathcal{B}_2$. Then, the above expression yields 
\beqn
n_k = \prod_{i \in I_k} n_i^{r_i(k)}, \quad \text{where } r_i(k) := \begin{cases}
-\frac{\beta_i}{\beta_k}, & i \in \mathcal{B}_1, \\
\frac{\beta_i}{\beta_k}, & i \in \mathcal{B}_2.
\end{cases}
\eeqn
Observe that $\{\ln n_i: i\in I_k\cup \{k\}\}$ is linearly dependent over $\mathbb Q$, but any proper subset is linearly independent, so $J=I_k\cup \{k\}\in\mathcal J(\mf n)$. Moreover, it follows from \eqref{ni-rep-1} that the unique $|J|$-tuple associated with $J$ is $(\beta_j)_{j\in J}$, and the unique partition associated with $J$ is $\{J_1=\mathcal B_1\cup \{k\}, J_2=\mathcal B_2\}$. Therefore, the polynomial associated to $J$ is given by 
\beqn
q_{_J}(\mf z) =
\prod_{i \in \mathcal B_1\cup\{k\}}b_i^{\frac{\beta_i}{2}}\prod\limits_{i \in \mathcal B_2}z_i^{\beta_i}-\prod\limits_{i \in \mathcal B_2}b_{i}^{\frac{\beta_j}{2}}\prod\limits_{i \in \mathcal B_1\cup\{k\}}z_{i}^{\beta_i}, \quad \mf z \in \mathbb C^d.
\eeqn
Now suppose $\mf w = (w_i)_{i=1}^d\in Z(q_{_J})$ with $w_i \neq 0$ for all $i\in I_k$. Then by the expression of $q_{_J}$,
\beqn
w_k^{\beta_k} = \prod\limits_{i \in \mathcal B_2}b_{i}^{-\frac{\beta_i}{2}}
\prod\limits_{i \in \mathcal B_1\cup\{k\}}
 		b_{i}^{\frac{\beta_i}{2}}\prod\limits_{i \in \mathcal B_1}w_{i}^{-\beta_i}\prod\limits_{i \in \mathcal B_2}w_{i}^{\beta_i}.
\eeqn
Substituting the expression for $r_i(k)$, this becomes 
\[
w_{k}=\sqrt{b_{k}} \prod_{i\in I_k}b_{i}^{-\frac{r_{i}(k)}{2}}w_{i}^{r_{i}(k)},
\]
which proves the desire identity.
\end{proof}

\begin{lemma}\label{exis-J}
Let $p, q \in \mathbb N$. Let $\mf n = (n_i)_{i=1}^{p+q}$ be a $(p+q)$-tuple of distinct elements in $\ntwo$ such that $\{\ln n_i: i=1,\dots, p\}$ is linearly independent over $\mathbb Q$. Suppose that there exist $\{\gamma_i\}_{i=1}^p \subseteq \mathbb Q \setminus \{0\}$ and $\{\gamma_{p+i}\}_{i=1}^q \subseteq \mathbb N$ such that
	\beq
	\label{dep-assumption}
	\prod_{i=1}^p n_i^{\gamma_i} = \prod_{i=p+1}^{p+q} n_{i}^{\gamma_{i}}.
	\eeq
  Then, there exists $J \in \mathcal J(\mf n)$ with the property that 
either $J_1$ or $J_2$ is a subset of $ \{1,2, \ldots, p\}$, where $\{J_1, J_2\}$ is the unique partition associated with $J$, as given in Proposition~\ref{uniq-decom}.
\end{lemma}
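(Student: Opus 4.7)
The plan is a minimum-support argument on the $\mathbb Q$-space of linear relations among $\{\ln n_i\}_{i=1}^{p+q}$. First I would recast the hypothesis in additive form: setting $c_i := \gamma_i$ for $i \le p$ and $c_i := -\gamma_i$ for $i > p$ yields a nonzero rational tuple $\mathbf c = (c_i)$ with $\sum_{i=1}^{p+q} c_i \ln n_i = 0$ and $c_i < 0$ for every $i > p$. Let
\[
\mathcal R := \bigl\{\mathbf c' \in \mathbb Q^{p+q}\setminus\{0\}\ :\ \textstyle\sum_i c'_i \ln n_i = 0,\ c'_i \Le 0 \text{ for all } i > p\bigr\}.
\]
Then $\mathbf c \in \mathcal R$, so $\mathcal R \neq \emptyset$. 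I choose $\mathbf c \in \mathcal R$ of minimum support size and set $J := \textup{supp}(\mathbf c)$; the claim is that this $J$ works.

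Next I would verify $J \in \mathcal J(\mf n)$. Clearly $\{\ln n_i : i \in J\}$ is $\mathbb Q$-dependent. Suppose for contradiction some $J' \subsetneq J$ is dependent, witnessed by a nontrivial $\mathbf d$ with $\textup{supp}(\mathbf d) \subseteq J'$. Since $\{\ln n_i : i \le p\}$ is independent by hypothesis, $\textup{supp}(\mathbf d)$ must meet $(p,p+q]$; after possibly replacing $\mathbf d$ by $-\mathbf d$, we may choose $j_0 \in J \cap (p, p+q]$ with $d_{j_0} > 0$. Define
\[
\lambda := \min\bigl\{-c_j/d_j\ :\ j \in J,\ j > p,\ d_j > 0\bigr\},
\]
a finite positive rational (the set is nonempty, and $c_j < 0$ on $J \cap (p,p+q]$). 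A short case analysis on $\textup{sign}(d_j)$ for $j > p$, using $c_j \Le 0$ and $\lambda > 0$, gives $c_j + \lambda d_j \Le 0$ for every $j > p$, so $\mathbf c + \lambda \mathbf d \in \mathcal R$. Its support lies in $J$, omits the index achieving the minimum, and is nonzero (otherwise $\mathbf c = -\lambda\mathbf d$ would be supported in $J' \subsetneq J$), contradicting the minimality of $|J|$. Hence $J \in \mathcal J(\mf n)$.

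Finally, I read off the partition via Proposition~\ref{uniq-decom}: the circuit $J$ admits a unique partition $\{J_1, J_2\}$ and a unique tuple $(\beta_i)_{i \in J} \in \mathbb N^{|J|}$ with $\gcd=1$ satisfying $\prod_{i \in J_1} n_i^{\beta_i} = \prod_{i \in J_2} n_i^{\beta_i}$. Clearing denominators in $\sum_{i \in J} c_i \ln n_i = 0$ and splitting by sign produces exactly such a representation with $\{J_1, J_2\} = \bigl\{\{i \in J : c_i > 0\},\{i \in J : c_i < 0\}\bigr\}$. Both classes are nonempty (the negative part contains $J \cap (p,p+q]$, and the positive part is nonempty since otherwise $\sum_{i \in J} c_i \ln n_i < 0$), and since $c_i \Le 0$ for every $i > p$, we get $\{i \in J : c_i > 0\} \subseteq \{1,\dots,p\}$. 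So one element of the partition lies in $\{1,\dots,p\}$, as required.

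The main obstacle is the support-reduction step: one must simultaneously shrink the support of $\mathbf c$ strictly and preserve the sign constraint $c_i \Le 0$ at indices above $p$. The choice of $\lambda$ as the minimum ratio over positive-$d_j$ indices in $(p,p+q]$ saturates the constraint at exactly one such index without violating it elsewhere, and the hypothesis that $\{\ln n_i : i \le p\}$ is independent is precisely what guarantees the set over which this minimum is taken is nonempty.
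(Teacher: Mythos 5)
Your proposal is correct, and it takes a genuinely different route from the paper. The paper argues by strong induction on $q$: the base case $q=1$ is delegated to Lemma~\ref{existence-J}, and the inductive step picks an index $j>p$ maximizing the ratio $\beta_j/\gamma_j$, substitutes the expression for $n_j$ from \eqref{dep-assumption} into the relation defining $J$, checks that the resulting exponents on one side are non-negative, and recurses on the shorter tuple. Your argument is a one-shot minimum-support argument on the cone
\[
\mathcal R = \bigl\{\mathbf c'\in\mathbb Q^{p+q}\setminus\{0\} : \textstyle\sum_i c'_i\ln n_i=0,\ c'_i\Le 0\ \text{for}\ i>p\bigr\},
\]
picking a minimum-support $\mathbf c$ and showing its support $J$ is a circuit whose sign-positive part lies in $\{1,\dots,p\}$; the support-reduction step $\mathbf c\mapsto\mathbf c+\lambda\mathbf d$ with $\lambda=\min\{-c_j/d_j : j\in J,\ j>p,\ d_j>0\}$ is the standard way to annihilate a coordinate while keeping the sign constraints, and independence of $\{\ln n_i : i\Le p\}$ is used exactly where it should be, to guarantee the minimizing set is nonempty. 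What this buys you: no induction, no reliance on Lemma~\ref{existence-J}, and the partition of $J$ is read off directly from the sign pattern of $\mathbf c$ via Proposition~\ref{uniq-decom}. One small cleanup: when you assert ``the negative part contains $J\cap(p,p+q]$,'' you should note explicitly that $J\cap\{p+1,\dots,p+q\}\neq\emptyset$ (else $J\subseteq\{1,\dots,p\}$ would contradict independence); non-emptiness of both parts is in any case automatic from Proposition~\ref{uniq-decom} once $J\in\mathcal J(\mf n)$ is established, so what actually matters for the conclusion is only the inclusion $\{i\in J : c_i>0\}\subseteq\{1,\dots,p\}$, which you have.
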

\begin{proof}
We first consider the base case, where $q=1$ and $p$ is any natural number.  Then by \eqref{dep-assumption}, the set $\{\ln n_i: i=1,\dots,p+1\}$ is linearly dependent over $\mathbb Q$, while $\{\ln n_i: i=1,\dots, p\}$ is linearly independent. Therefore, $\{\ln n_i: i=1,\dots, p\}$ forms a basis of $\textup{span}_\mathbb Q\{\ln n_j: j=1,\dots, p+1\}$. By Lemma~\ref{existence-J}, there exists
$J \in \mathcal J(\mf n)$ such that $J_1 \cup J_2 = J \subseteq \{1, 2, \ldots, p+1\}$. Consequently, either $J_1\subseteq  \{1,2, \ldots, p\}$ or $J_2\subseteq  \{1,2, \ldots, p\}$, establishing the result for $q=1$.

We now proceed by strong induction on $q$.
Assume the lemma holds for all $p\in\mathbb N$ and for all positive integers less than $k$; that is, for every $p \in \mathbb{N}$ and $1 \Le q < k$, if  a $(p+q)$-tuple $\mf n$ satisfies the hypothesis of the lemma, then there exists $ J \in \mathcal{J}(\mf{n})$ such that one of $ J_1$, $ J_2$ is contained in $ \{1, \dots, p\}$. This is our induction hypothesis.

Let $\mf n =(n_i)_{i=1}^{p+k}$ be a $(p+k)$-tuple of distinct elements in $\ntwo$ which satisfies the hypothesis of the lemma with 
	\beq
	\label{Alzerlem1}
	\prod_{i=1}^p n_i^{\gamma_i} = \prod_{i=p+1}^{p+k} n_{i}^{\gamma_{i}}.
	\eeq
Suppose there exists $J \in \mathcal J(\mf n)$ such that both $J_1$ and $J_2$ are not contained in $\{1, 2, \ldots, p\}.$ Our goal is to construct another element of $\mathcal J(\mf n)$, with the help of $J$, for which one of the partitions is contained in $\{1,\dots,p\}$. By Proposition~\ref{uniq-decom}, suppose the representation associated with $J$ takes the form: 
    \beq
    \label{form-3}
    \prod_{i \in J_1} n_i^{\beta_i} = \prod_{i \in J_2} n_i^{\beta_i}.
    \eeq
Let \beq\label{max-assump}
     \max\Big\{\frac{\beta_{i}}{\gamma_{i}}: i \in J_1, i\Ge p+1\Big\}=\frac{\beta_{j}}{\gamma_{j}}.
     \eeq
By \eqref{Alzerlem1},
	\beq
    \label{mj1-expression}
	n_{j}^{\gamma_{j}} = \prod_{i=1}^p n_{i}^{\gamma_i} \times \prod_{\substack{i=p+1 \\ i\neq j}}^{p+k} n_i^{-\gamma_i}.
	\eeq
Substituting \eqref{mj1-expression} into \eqref{form-3} yields:
	\beqn
\prod_{i\in J_1\setminus\{j\}}n_i^{\beta_i\gamma_j}\prod_{i=1}^p n_{i}^{\gamma_i\beta_j}  \prod_{\substack{i=p+1 \\ i\neq j}}^{p+k} n_i^{-\gamma_i\beta_j}=\prod_{i\in J_2}n_i^{\beta_i\gamma_j}.
	\eeqn
After certain rearrangements, this reduces to 
	\beq \label{rearrangement}
\prod_{\substack{i\in J_1\\ i\Le p}} n_i^{\beta_i\gamma_j}
\prod_{i=1}^p n_{i}^{\gamma_i\beta_j}\prod_{\substack{i\in J_2\\ i\Le p}} n_i^{-\beta_i\gamma_j}  
=
\prod_{\substack{i\in J_2 \\ i \Ge p+1}} n_i^{\beta_i\gamma_j}\prod_{\substack{i\in J_1\setminus\{j\} \\ i \Ge p+1}} n_i^{\gamma_i\beta_j-\beta_i\gamma_j}\prod_{\substack{i=p+1 \\ i\notin J_1}}^{p+k} n_i^{\gamma_i\beta_j}.
    \eeq
By \eqref{max-assump}, for all $i \in (J_1\setminus\{j\})\cap \{p+1,\dots, p+k\}$, we have $\gamma_i\beta_{j}-\beta_{i}\gamma_j \Ge 0$, so all exponents on the right-hand side of the above identity are non-negative. Since $J_2$ is not contained in $\{1,2,\dots, p\}$, the set $J_2\cap \{p+1, p+2,\dots, p+k\}$ is non-empty. Also, for each $i \in J_2\cap \{p+1, p+2,\dots, p+k\}$ the exponent of $n_i$ on the right-hand side of ~\eqref{rearrangement} is positive. Consequently, not all the exponents of $n_i$ on the left-hand side of ~\eqref{rearrangement} can be zero. We now define
\begin{align*}
I_1:=\{i: 1\Le i\Le p, \mbox{ the exponent of } n_i\ \mbox{on the left hand-side of ~\eqref{rearrangement} is  non-zero}\}
\end{align*}
and 
\[
I_2:=\{i: p+1\Le i\Le p+k,\ i\neq j,  \ \mbox{ the exponent of } n_i\ \mbox{on the right hand-side of ~\eqref{rearrangement} is positive}\}.  
\]
Consider $ \mf{n}' := (n_i)_{i \in I_1\cup I_2}$. The identity ~\eqref{rearrangement} implies a relation of the form \eqref{dep-assumption} for $ \mf{n}'$. Hence, by the induction hypothesis, there exists $ J' \in \mathcal{J}(\mf{n}')$ such that one of the partitions $J_1', J_2'$ lies entirely in $ I_1$.
Since $\mf n'$ is a sub-tuple of $\mf n$, $J'$ will induce an element in $\mathcal J(\mf n)$ with the required property. This completes the proof.
\end{proof}

We are now ready to prove Theorem~\ref{variety-gen}.

\begin{proof}[\textbf{Proof of Theorem~\ref{variety-gen}}]
Since $\{\ln n_i: i=1,\dots,d\}$ is linearly dependent over $\mathbb Q$, it follows that $\mathcal J(\mf n) \neq \emptyset$. To prove the inclusion 
\[V \subseteq \mathbb B_d \cap \Big(\bigcap_{J \in \mathcal J(\mf n)} Z(q_{_J})\Big),\] 
let $\mf w \in V$, and fix $J\in \mathcal J(\mf n)$. By Proposition~\ref{uniq-decom}, let $(\beta_j)_{j \in J}$ with $\gcd(\beta_j)_{j=1}^d=1$ and $\{J_1, J_2\}$ be the unique $|J|$-tuple and the unique partition associated with $J$, respectively. Then, the representation associated to $J$ is  
\beq
\label{uniq-rep-mainproof}
\prod_{i \in J_1}	n_i^{\beta_i} = \prod_{i \in J_2} n_i^{\beta_i},
\eeq
and the polynomial associated to $J$ is given by 
\beqn
	q_{_J}(\mf z) = \prod\limits_{i \in J_1}
	b_i^{\frac{\beta_i}{2}}\prod\limits_{i \in J_2}z_i^{\beta_i}-\prod\limits_{i \in J_2}b_i^{\frac{\beta_i}{2}}\prod\limits_{i \in J_1}z_i^{\beta_i},\quad \mf z \in \mathbb C^d.
	\eeqn
 Let the kernel $K_{\mf b, \mf n}$ be defined on $\mathbb H_\rho$ for some $\rho\in \mathbb R$. Recall that 
    \beq
    \label{f-s}
    f_{\mf b, \mf n}(s) = (\sqrt{b_i}n_i^{-s})_{i=1}^d,\quad  s \in \mathbb H_\rho.
    \eeq
Since 
\beqn
	q_{_J}(f_{\mf b, \mf n}(s)) = \prod\limits_{i \in J_1}
	b_i^{\frac{\beta_i}{2}}\prod\limits_{i \in J_2}b_i^{\frac{\beta_i}{2}}n_i^{-\beta_is}-\prod\limits_{i \in J_2}b_i^{\frac{\beta_i}{2}}\prod\limits_{i \in J_1}b_i^{\frac{\beta_i}{2}}n_i^{-\beta_is}\overset{\eqref{uniq-rep-mainproof}}= 0,\quad s \in \mathbb H_\rho,
    \eeqn
which implies that $q_{_J}$ vanishes on $f_{\mf b, \mf n}(\mathbb H_\rho).$  Since $q_{_J}$ is a polynomial and thus an element of $\mathcal M_d$, it follows from the definition of $V$ (see \eqref{def-V}) that $q_{_J}(\mf w) = 0$ and $\mf w \in \mathbb B_d$. As $J \in \mathcal J(\mf n)$ was arbitrary, we conclude that 
$\mf w \in \mathbb B_d \cap \bigcap_{J \in \mathcal J(\mf n)} Z(q_{_J}),$ 
establishing the desired conclusion.
	
We now prove the reverse inclusion: 
\[
    \mathbb B_d \cap \bigcap_{J \in \mathcal J(\mf n)} Z(q_{_J}) \subseteq V.
\]
To this end, let 
\beq
    \label{assump-standing}
    \mf w = (w_i)_{i=1}^d\in \mathbb B_d \cap \bigcap_{J \in \mathcal J(\mf n)} Z(q_{_J}),\ \text{ and }\ g \in \mathcal M_d \text{ such that } g|_{f_{\mf b, \mf n}(\mathbb H_\rho)} \equiv 0.
\eeq
We aim to show that $g(\mf w) = 0$, which will imply that $\mf w\in V$. By Remark~\ref{0-in-V}, if $\mf w = 0$ then $\mf w \in V.$ So, we assume that $\mf w \neq 0$. 
    
Set $I: = \{1 \Le i \Le d : w_i \neq 0\} \neq \emptyset$. Let $\mathcal B\subseteq I$ (possibly infinite) be such that $\{\ln n_i : i \in \mathcal B\}$ forms a basis of $\textup{span}_\mathbb Q\{\ln n_i : i \in I\}.$ 
Then for every $k \in I \setminus \mathcal B$ (if, non-empty), the set $\{\ln n_k, \ln n_i : i \in \mathcal B\}$ is linearly dependent over $\mathbb Q$. So, by Lemma~\ref{existence-J}, there exists a tuple $(r_i(k))_{i\in \mathcal B}$ of rational numbers such that
\beq
\label{n-it-formula}
n_{k}=\prod_{i\in\mathcal{B}} n_{i}^{r_i(k)}, \quad k \in I \setminus \mathcal B.
\eeq
Similarly, let $I_0: = \{1 \Le i \Le d : w_i = 0\}$. If $I_{0}$ is non-empty, then there exists a non-empty set $\mathcal B_0 \subseteq I_0$ (possibly infinite) such that $\{\ln n_i : i \in \mathcal B_0 \cup \mathcal B\}$ forms a basis of $\textup{span}_\mathbb Q\{\ln n_i : i =1, 2, \ldots, d\}$. Note that if $\clb_0$ is empty, then it contradicts the identity \eqref{wi-k} in Lemma~\ref{existence-J} due to the non-emptiness of $I_0$.
For each $k \in I_0 \setminus \mathcal B_0$ (if, non-empty), the set $\{\ln n_k, \ln n_i : i \in \mathcal B \cup \mathcal B_0\}$ is linearly dependent over $\mathbb Q$, and hence, by Lemma~\ref{existence-J}, there exist tuples $(r_i(k))_{i\in\mathcal{B}}$ and $(\tilde{r}_i(k))_{i\in\mathcal{B}_0}$ of rational numbers such that
\beq
\label{n-it-formula-1}
n_{k}=\prod_{i\in\clb} n_{i}^{r_i(k)}\prod_{i\in\clb_0}n_i^{\tilde{r}_i(k)}, \quad k \in I_0 \setminus \mathcal B_0.
\eeq

Thus, for any $\alpha = (\alpha_i)_{i=1}^d\in \mathbb Z_{+}^d,$ putting \eqref{n-it-formula} and \eqref{n-it-formula-1} together, we get
\beq
\label{prod-factor}
\prod_{i=1}^d n_i^{\alpha_i} &=& \prod_{i\in\clb} n_{i}^{\alpha_{i}} \times \prod_{i\in\clb_0} n_{i}^{\alpha_{i}} \times  \prod_{k \in I \setminus \mathcal B} n_k^{\alpha_k} \times \prod_{k \in I_0 \setminus \mathcal B_0} n_k^{\alpha_k} \notag\\
&=& \prod_{i\in\clb} n_{i}^{\big(\alpha_{i}+\sum\limits_{k \in I\setminus \mathcal B}\alpha_{k}r_i(k)+\sum\limits_{k \in I_0 \setminus \mathcal B_0} \alpha_{k}r_i(k)\big)} 
\times \prod_{i\in\clb_0} n_{i}^{\big(\alpha_{i}+\sum\limits_{k \in I_0 \setminus \mathcal B_0} \alpha_{k}\tilde{r}_i(k)\big)} \notag \\
&=&   \prod_{i\in\clb}n_{i}^{\psi_1(\alpha)(i)} \times \prod_{i\in\clb_0}n_{i}^{\psi_2(\alpha)(i)},
\eeq
where $\psi_1: \mathbb Z_+^d \rar \mathbb Q^{\clb}$ (the set of all functions from $\clb$ to $\mathbb Q$) and $\psi_2: \mathbb Z_+^d \rar \mathbb Q^{\clb_0}$ are defined by
\beqn
\psi_1(\alpha)(i) = \alpha_{i}+\sum\limits_{k \in I\setminus \mathcal B}\alpha_{k}r_i(k)+\sum\limits_{k \in I_0 \setminus \mathcal B_0} \alpha_{k}r_i(k), \quad   i\in\clb,~~\alpha \in \mathbb Z_+^d,
\eeqn
and 
\beqn
\psi_2(\alpha)(i) = \alpha_{i}+\sum\limits_{k \in I_0 \setminus \mathcal B_0} \alpha_{k}\tilde{r}_i(k), \quad i\in\clb_0,~~ \alpha \in \mathbb Z_+^d.
\eeqn
Observe that both $I \setminus \mathcal B$ and $I_0 \setminus \mathcal B_0$ can not be empty. If either of the sets $I \setminus \mathcal B$ and $I_0 \setminus \mathcal B_0$ is empty, we adopt the convention that the corresponding summation terms are omitted from the definitions of $\psi_1$ and $\psi_2$, respectively.   
Consider the function $\psi:\mathbb Z_+^d \rar \mathbb Q^{\clb\cup\clb_0}$ defined by 
\[ \psi(\alpha)(i) = \begin{cases} \psi_1(\alpha)(i) & \text{if}\ i\in\clb,\\
\psi_2(\alpha)(i) & \text{if}\ i\in\clb_0.
    \end{cases} \] 

We now pause to state and prove the following claim, crucial for the final step of the proof.

\textbf{Claim:}
\beq
\label{imp-obs-psi}
\psi(\{\alpha \in \mathbb Z_+^d : \alpha_i = 0~\text{for all}~i \in I_0\}) = \{\mf y\in \text{Im}(\psi): \mf y(i)=0\ \text{for all}\ i\in\clb_0 \}.
\eeq
\textit{Proof of the claim.} The inclusion ``$\subseteq$" is immediate by the definition of $\psi$. For the reverse inclusion, fix $\mf y \in \text{Im}(\psi)$ with $\mf y(i)=0$ for all $i\in \clb_0$ and let $\alpha \in \psi^{-1}(\mf y).$ 
 Then $\psi_2(\alpha) = 0,$ and by \eqref{prod-factor}, 
 \beqn
\prod_{i=1}^d n_{i}^{\alpha_{i}} = \prod_{i\in\clb} n_{i}^{\mf y(i)}.
 \eeqn
 Using this and the fact that $I \cup I_0 = \{1, 2, \ldots, d\}$, we get
\beq
\label{exis-J-cons}
\prod_{i \in I_0} n_{i}^{\alpha_{i}} = \prod_{i \in I \setminus \mathcal B} n_i^{-\alpha_i} \times \prod_{i\in\clb} n_{i}^{\mf y(i) - \alpha_{i}} \overset{\eqref{n-it-formula}}= \prod_{i\in\clb}n_{i}^{{\mf y(i)}-\alpha_{i}-\sum\limits_{k \in I \setminus \mathcal B}\alpha_{k}r_i{(k)}}.
\eeq
We need to verify that $\alpha_i = 0$ for all $i \in I_0$. Suppose in contrast that $\alpha_{j} \neq 0$ for some $j \in I_0$. Since $\{\ln n_i: i \in \mathcal B\}$ is linearly independent over $\mathbb Q$, Lemma~\ref{exis-J} is applicable to the relation in \eqref{exis-J-cons}. Thus, there exists $J \in \mathcal J(\mf n)$ such that $J \subseteq \mathcal B \cup I_0$ and either $J_1$ or $J_2$ is a subset of $\mathcal B$, where $\{J_1, J_2\}$ is the partition associated with $J$. Without loss of generality, we assume that $J_1\subseteq \mathcal B$. In such a case, if $J_2 \subseteq \mathcal B$, then $J = J_1 \cup J_2 \subseteq \mathcal B$ contradicts the fact that $\{\ln n_{i}\}_{i \in \mathcal B}$ is linearly independent over $\mathbb Q$. Hence, there exists $j_0 \in I_0 \cap J_2$. 
Consequently, the polynomial associated with $J$ has the form 
\beqn
	q_{_J}(\mf z) =z_{j_0}^{\beta_{j_0}}\prod\limits_{i \in J_1}
	b_i^{\frac{\beta_i}{2}}\prod\limits_{i \in J_2 \setminus \{j_0\}}z_i^{\beta_i}-\prod\limits_{i \in J_2}b_i^{\frac{\beta_i}{2}}\prod\limits_{i \in J_1}z_i^{\beta_i}, \quad \mf z \in \mathbb C^d,
\eeqn
where $(\beta_i)_{i \in J} \in \mathbb N^{|J|}$ satisfying $\gcd(\beta_i)_{i \in J}=1$ is the tuple associated with $J$.
By assumption ~\eqref{assump-standing}, we have $q_{_J}(\mf w) = 0$. Since $j_0\in I_0$, $w_{{j_0}} = 0$ and it follows from the expression of $q_{_J}$ that $w_{i}$ must be zero for some $i \in J_1 \subseteq  \mathcal B$. This is a contradiction, as by definition $w_{i} \neq 0$ for all $i \in \mathcal B$. This proves the claim. 

Now coming back to the proof of the theorem, note that by ~\eqref{assump-standing} $g \in H_d^2$ and admits a power series expansion $
g(\mf z)=\sum\limits_{{\bf \alpha} \in \mathbb Z_+^d}c_{\mf \alpha} {\mf z}^{\mf \alpha},~ \mf z \in \mathbb B_d$. 
Since $g|_{f_{\mf b,\mf n}(\mathbb H_\rho)} \equiv 0$, then using  ~\eqref{prod-factor}, ~\eqref{g-f} and the definition of $\psi$, we have for all $s \in \mathbb H_\rho$,
\beq
\label{g-f}
g(f_{\mf b, \mf n}(s))& \overset{\eqref{f-s}}= & \sum_{\alpha \in \mathbb Z_+^d}c_\alpha \mf b^{\frac{\alpha}{2}} \Big(\prod_{i=1}^d n_i^{\alpha_i}\Big)^{-s}\notag \\
&=& \sum_{{\mf \alpha} \in \mathbb Z_+^d}c_{\mf \alpha}\mf b^{\frac{\mf \alpha}{2}}\Big(\prod_{i\in\clb}n_{i}^{\psi_1(\alpha)(i)} \times \prod_{i\in\clb_0}n_{i}^{\psi_2(\alpha)(i)}\Big)^{-s}\notag\\
&=&\sum_{\substack{\mf y \in \text{Im}(\psi)}}\Big(\sum_{\substack{\alpha \in \mathbb Z_+^d
		\\ \psi(\alpha) = \mf y}}
c_{\alpha} \mf b^{\frac{\alpha}{2}}\Big)  \Big(\prod_{i\in\clb}
n_{i}^{\mf y(i)} \times \prod_{i\in\clb_0}
n_{i}^{\mf y(i)}\Big)^{-s}  = 0,
\eeq
where $\mf b^{\frac{\alpha}{2}} = \prod_{i=1}^d b_{i}^{\frac{\alpha_i}{2}}$ for any $\alpha = (\alpha_i)_{i=1}^d \in \mathbb Z_+^d$. To see that the series above is a Dirichlet series, note that if $\mf y \in \text{Im}(\psi)$, then for every $(\alpha_j)_{j=1}^d \in \psi^{-1}(\mf y),$
\beqn
u_\mf y: = \prod_{i\in\clb}n_{i}^{\mf y(i)} \times \prod_{i\in\clb_0}
n_{i}^{\mf y(i)} \overset{\eqref{prod-factor}}= \prod_{i=1}^d n_{i}^{\alpha_{i}} \in \mathbb N.
\eeqn
Moreover, since $\{\ln n_i : i \in \mathcal B_0 \cup \mathcal B\}$ is linearly independent over $\mathbb Q$, according to Lemma~\ref{unique-term}, each $\mf y \in \text{Im}(\psi)$ corresponds to a unique natural number $u_\mf y$. Therefore, the series appearing in ~\eqref{g-f} is a Dirichlet series. So,
by the uniqueness of Dirichlet series,
\beq
 \label{cond-uniq-coeff}
\sum_{\substack{\alpha \in \mathbb Z_+^d
\\ \psi(\alpha) = \mf y}}
c_{\alpha} \mf b^{\frac{\alpha}{2}} = 0 \quad \forall ~~\mf y \in \text{Im}(\psi).
\eeq
In particular, for $\mf y \in \text{Im}(\psi)$ with $\mf y(i)=0$ for all $i\in\clb_0$, by the previous claim in \eqref{imp-obs-psi}, we get
\beq
\label{cond-uniq-coeff-red}
\sum_{\substack{\alpha \in \psi^{-1}(\mf y)
		\\ 	\alpha_{i}=0, \, i \in I_0}}
c_{\mf  \alpha} \mf b^{\frac{\alpha}{2}} = \sum_{\substack{\alpha \in \mathbb Z_+^d
\\ \psi(\alpha) = \mf y}}
c_{\alpha} \mf b^{\frac{\alpha}{2}} = 0.
\eeq
Rest of the proof is divided into the following two cases.

\textbf{Case I.} Suppose $I_0\neq\emptyset$. Since $w_i = 0$ for $i \in I_0,$ 
by Lemma~\ref{existence-J}, 
\beqn
g(\mf w) = \sum\limits_{\substack{{\mf \alpha} \in \mathbb Z_+^d\\ \alpha_{i}=0,\,i \in I_0}}c_{\mf\alpha} \mf w^{\mf \alpha} &=& \sum\limits_{\substack{{\mf \alpha} \in \mathbb Z_+^d\\ \alpha_{i}=0, \,i \in I_0}}c_{\mf  \alpha} \times  \prod\limits_{i\in\clb} w_{i}^{\alpha_{i}}\times\prod\limits_{k \in I \setminus \mathcal B}\Big(b_{k}^{\frac{\alpha_{k}}{2}}\prod\limits_{i\in\clb} b_{i}^{-\frac{r_i(k)\alpha_k}{2}}w_{i}^{{r_i(k)\alpha_k}}\Big) \notag\\
&=& \sum_{\substack{{\mf \alpha} \in \mathbb Z_+^d\\ \alpha_{i}=0, \,i \in I_0}}c_{\mf  \alpha} 
\times \prod\limits_{i \in I} b_{i}^{\frac{\alpha_{i}}{2}} \times \prod\limits_{i\in\clb} (b_{i}^{-\frac{1}{2}} w_{i})^{\alpha_{i}+\sum\limits_{k \in I \setminus \mathcal B}\alpha_{k}r_i(k)} \notag\\
&\overset{\eqref{imp-obs-psi}}=& 
\sum_{\substack{\mf y \in \text{Im}(\psi) \\ \mf y(i) = 0 (\forall\  i\in\clb_0)}} 
\Big(\sum_{\substack{\alpha \in \psi^{-1}(\mf y)
		\\ 	\alpha_{i}=0, \, i \in I_0}}
c_{\mf  \alpha} \prod_{i=1}^d b_i^{\frac{\alpha_i}{2}}\Big)
 \prod_{i\in\clb} (b_{i}^{-\frac{1}{2}} w_{i})^{\mf y(i)}\\
 &\overset{\eqref{cond-uniq-coeff-red}}=& 0.
\eeqn
\textbf{Case II.} Suppose $I_0=\emptyset$. Then, for $\alpha = (\alpha_i)_{i=1}^d\in\mathbb Z_+^d$,
\beqn
\psi(\alpha)(i) = \psi_1(\alpha)(i)=\alpha_{i}+\sum\limits_{k \in I\setminus \mathcal B}\alpha_{k}r_i(k), \quad  i\in\clb,
\eeqn
so that for $\mf y \in  \text{Im}(\psi)$,  $\mf y(i) = \psi(\alpha)(i), ~ i \in \mathcal B.$
Therefore, by Lemma~\ref{existence-J}, 
\beqn
g(\mf w) = \sum_{\substack{{\mf \alpha} \in \mathbb Z_+^d}}c_{\mf\alpha} \mf w^{\mf \alpha} &=& \sum_{\substack{{\mf \alpha} \in \mathbb Z_+^d}}c_{\mf  \alpha} \times \prod_{i\in\clb} w_{i}^{\alpha_{i}}\times\prod_{k \in I \setminus \mathcal B}\Big(b_{k}^{\frac{\alpha_{k}}{2}}\prod_{i\in\clb} b_{i}^{-\frac{r_i(k)\alpha_k}{2}}w_{i}^{{r_i(k)\alpha_k}}\Big) \notag\\
&=& \sum_{\substack{{\mf \alpha} \in \mathbb Z_+^d}}c_{\mf  \alpha} 
\times \prod_{i \in I} b_{i}^{\frac{\alpha_{i}}{2}} \times \prod_{i\in\clb} (b_{i}^{-\frac{1}{2}} w_{i})^{\alpha_{i}+\sum\limits_{k \in I \setminus \mathcal B}\alpha_{k}r_i(k)} \notag\\
&=& 
\sum_{\substack{\mf y \in \text{Im}(\psi)}} 
\Big(\sum_{\substack{\alpha \in \psi^{-1}(\mf y)
		}}
c_{\mf  \alpha} \prod_{i=1}^d b_i^{\frac{\alpha_i}{2}}\Big)
 \prod_{i\in\clb} (b_{i}^{-\frac{1}{2}} w_{i})^{\mf y(i)}\\
 &\overset{\eqref{cond-uniq-coeff}}=& 0.
\eeqn
 This completes the proof.
\end{proof}


We now consider a family of examples of CNP Dirichlet series kernels and compute their multiplier varieties to illustrate Theorem~\ref{variety-gen}.
\begin{example}
(i) Consider $d= 3$ and $\mf n= (2, 3, 6)$. Let $\mf{b} = (b_j)_{j=1}^3$ be a triple of positive real numbers. Then, $\mathcal J(\mf n) = \{\{1,2,3\}\}$, and the unique partition and the triple associated with $\{1,2,3\}$ are $\{\{1,2\}, \{3\}\}$ and $(1,1,1)$, respectively. By Theorem~\ref{variety-gen}, the multiplier variety associated with $K_{\mf b, \mf n}$ is given by
   \beqn
V &=& \{(z_1, z_2,z_3) \in \mathbb B_3 :  \sqrt{b_1b_2}z_3 =\sqrt{b_3}z_1z_2\}.
   \eeqn
   
Now consider extending $\mf n$ by including additional primes; that is, suppose that the frequency data is a tuple of distinct elements in $\ntwo$ of the form $\mf n = (2, 3, 6, p_1, \dots, p_r)$, where $p_i$'s are distinct prime numbers larger than $3$. Then, for any weight tuple $\mf b=(b_i)_{i=1}^{r+3}$, the multiplier variety associated with $K_{\mf b,\mf n}$ is determined by the same polynomial and it is given by  
\beqn
V &=& \{(z_1,z_2,\dots, z_{r+3}) \in \mathbb B_{r+3} :  \sqrt{b_1b_2}z_3 =\sqrt{b_3}z_1z_2\}.
   \eeqn
This is due to the fact that $\mathcal J(\mf n)$ remains the same, as do the unique partition and the tuple associated with each member of $\mathcal J(\mf n)$.  

 (ii) Consider $d= 3$ and $\mf n = (q_1, q_2, q_1^2 q_2)\in \ntwo^3$ such that $\{\ln q_1, \ln q_2\}$ is a linearly independent set over $\mathbb Q$. Let $\mf{b} = (b_j)_{j=1}^3$  be a weight data. Then, $\mathcal J(\mf n) = \{\{1,2,3\}\}$, and the unique partition and the triple associated with $\{1,2,3\}$ are $\{\{1,2\}, \{3\}\}$ and $(2,1,1)$, respectively.
 Then, by Theorem~\ref{variety-gen}, the multiplier variety associated with $K_{\mf b, \mf n}$ is given by
   \beqn
V 
&=& \{(z_j)_{j=1}^3 \in \mathbb B_3 :  b_1\sqrt{b_2}z_3 =\sqrt{b_3}z_1^2z_2\}.
   \eeqn
\end{example}

If $V$ is the multiplier variety of a CNP kernel $K_{\mf b, \mf n}$ defined on $\mathbb H_\rho$, then by Remark~\ref{0-in-V}, the zero vector is always contained in $V$, and from the definition of multiplier variety \eqref{def-V} 
$f_{\mf b, \mf n}(\mathbb H_\rho) \subseteq V$.
Thus, $\{ 0\} \cup f_{\mf b, \mf n}(\mathbb H_\rho) \subseteq V$. As an application of Theorem~\ref{variety-gen}, the following result provides a family of CNP Dirichlet series kernels for which there is not much difference between $V$ and $f_{\mf b, \mf n}(\mathbb H_\rho)\cup\{ 0\}$ (cf. \cite[Lemma~8.1]{DHS}).

\begin{corollary}
Let $d \in \ntwo \cup \{\infty\}$, and let $(\mf b = (b_j)_{j=1}^d, \mf n= (n_j)_{j=1}^d)$ be a pair of weight and frequency data such that 
for all $i=2,\dots,d$, the set $\{\ln n_1, \ln n_i\}$ is linearly dependent over $\mathbb Q$. Let $V$ denote the multiplier variety associated with $K_{\mf b, \mf n}$. Suppose that $\sigma =\sigma_\mf a\big(\sum_{j=1}^d b_j n_j^{-s}\big)$.
\begin{itemize}
    \item [(i)] If $\sum_{j=1}^d b_jn_j^{-\sigma} \Ge 1$, then $V = f_{\mf b, \mf n}(\mathbb H_{\frac{\rho}{2}})\cup \{0\}$, where $\rho$ is the unique real number satisfying $\sum_{j=1}^d b_jn_j^{-\rho} = 1$.
    \item [(ii)] If instead, $\sum_{j=1}^d b_jn_j^{-\sigma} < 1$, then $V = f_{\mf b, \mf n}(\overline{\mathbb H_{\frac{\sigma}{2}}})\cup \{0\}$.
    \end{itemize}
\end{corollary}
\begin{proof}
(i) Suppose that $\sum_{j=1}^d b_jn_j^{-\sigma} \Ge 1$. Then, $K_{\mf b,\mf n}$ is well defined on $\mathbb H_{\frac{\rho}{2}}$. Hence, by the definition of multiplier variety, $f_{\mf b, \mf n}(\mathbb H_\frac{\rho}{2}) \cup \{0\} \subseteq V.$ 
Thus, it remains to show that $V \subseteq f_{\mf b, \mf n}(\mathbb H_\frac{\rho}{2}) \cup \{0\}$.
To this end, let $0 \neq \mf w= (w_j)_{j=1}^d \in V.$ Then, by Theorem~\ref{variety-gen}, we have
\beq \label{w-in-common}
\mf w \in \bigcap_{J \in \mathcal J(\mf n)} Z(q_{_J}) \cap \mathbb B_d.
\eeq
By the assumption, for every integer $i = 2, \ldots, d$, the set $J_i: =\{1, i\} \in \mathcal J(\mf n)$. Hence, by Proposition~\ref{rep-tuple-uni-full}, there exist integers $x_i, y_i \in \mathbb N^2$ with $\gcd(x_i, y_i) = 1$ such that 
\beq\label{cor-var-gen1}
n_i^{x_i}=n_1^{y_i}, \quad i =2, \ldots, d.
\eeq
If $q_{_{J_i}}$ is the polynomial associated to $J_i$, then by \eqref{w-in-common} we have 
\beqn
q_{_{J_i}}(\mf w) = b_i^{\frac{x_i}{2}}w_1^{y_i}-b_1^{\frac{y_i}{2}}w_i^{x_i} =  0, \quad i=2,\dots,d.
\eeqn
Since $\mf w \neq 0$, it follows that $w_1 \neq 0$. Solving for $w_i$, we obtain
\beq\label{cor-var-gen2}
w_i=\sqrt{\frac{b_i}{b_1^{r_i}}}w_1^{r_i}, \quad i = 2, \ldots, d,
\eeq
where $r_i = \frac{y_i}{x_i}> 0$.
Moreover, since $\sum_{j=1}^d|w_j|^2< \sum_{j=1}^d b_jn_j^{-\rho} = 1,$ it follows that $|w_i|<\sqrt{b_i}n_i^{\frac{-\rho}{2}}$ for some $1\Le i\Le d$.  Then, by \eqref{cor-var-gen2} and the positivity of $r_i$, we conclude that $|w_1|<\sqrt{b_1}n_1^{\frac{-\rho}{2}}$. Define 
\[
s:=-\frac{\ln (b_{1}^{-\frac{1}{2}}w_1)}{\ln n_1}.
\]
Then $s \in \mathbb H_\frac{\rho}{2}$ and $w_1 = \sqrt{b_{1}} n_1^{-s}$. Using \eqref{cor-var-gen1} and \eqref{cor-var-gen2}, we also have 
\beqn
w_i = \sqrt{b_i} (n_1^{r_i})^{-s} = \sqrt{b_i} n_i^{-s},\quad i=2, \ldots, d.
\eeqn 
Hence $\mf w=f_{\mf b, \mf n}(s)\in f_{\mf b, \mf n}(\mathbb H_\frac{\rho}{2})$, showing that $V \subseteq f_{\mf b, \mf n}(\mathbb H_\frac{\rho}{2})\cup\{0\}$. 

(ii) Suppose that $\sum_{j=1}^d b_jn_j^{-\sigma} < 1$. Then the kernel $K_{\mf b, \mf n}$ is well defined on $\mathbb H_{\frac{\sigma}{2}}$, and  
\[f_{\mf b, \mf n}(\overline{\mathbb{H}_\frac{\sigma}{2}}) \subseteq \mathbb{B}_d, \text{ and therefore } f_{\mf b, \mf n}(\overline{\mathbb{H}_\frac{\sigma}{2}}) \cup \{{0}\} \subseteq V.\]
We first claim that 
\[\overline{f_{\mf b, \mf n}(\mathbb{H}_\frac{\sigma}{2})} = f_{\mf b, \mf n}(\overline{\mathbb{H}_\frac{\sigma}{2}}) \cup \{0\}.\]
It follows from the continuity of $f_{\mf b, \mf n}$ that $f_{\mf b, \mf n}(\overline{\mathbb{H}_\frac{\sigma}{2}}) \cup \{0\} \subseteq \overline{f_{\mf b, \mf n}(\mathbb{H}_\frac{\sigma}{2})}$. For the reverse inclusion, let $\mathbf{z} = (z_j)_{j=1}^d \in \overline{f_{\mf b, \mf n}(\mathbb{H}_\frac{\sigma}{2})} \setminus \{0\}$. Then there exists a sequence $\{s_m\}_{m=1}^\infty \subseteq \mathbb{H}_\frac{\sigma}{2}$ such that 
\beqn
\lim_{m \rar \infty} \sqrt{b_i} n_i^{-s_m} = z_i,\quad i = 1, \ldots, d.
\eeqn
Since $\mathbf{z} \neq 0$, there exists some $l \in \{1, \ldots, d\}$ such that $z_{l} \neq 0$. It follows that 
\[
\lim_{m \rar \infty} s_m=\frac{-1}{\ln n_{l}} \ln \frac{z_l}{\sqrt{b_l}}:=s\in \overline{\mathbb H_\frac{\sigma}{2}},
\]
and therefore 
\beqn
z_i=\lim_{m \rar \infty} \sqrt{b_i} n_i^{-s_m} = \sqrt{b_i} n_i^{-s}, \quad i = 1, \ldots, d.
\eeqn
Hence $\mf z = (\sqrt{b_i} n_i^{-s})_{i=1}^d = f_{\mf b, \mf n}(s)$, proving the claim.

Now, to prove the reverse inclusion 
\[V \subseteq f_{\mf b, \mf n}(\overline{\mathbb H_\frac{\sigma}{2}}) \cup \{0\},\]
let $0 \neq \mf w= (w_j)_{j=1}^d \in V$. By similar arguments as in part (i), we have 
$w_1 \neq 0$ and 
\beq\label{cor-var-gen2-new}
w_i=\sqrt{\frac{b_i}{b_1^{r_i}}}w_1^{r_i}, \quad i = 2, \ldots, d,
\eeq
where $r_i = \frac{y_i}{x_i}$.
Let 
\[s:=-\frac{\ln b_{1}^{-\frac{1}{2}}w_1}{\ln n_1}.\]
Then using \eqref{cor-var-gen1} and \eqref{cor-var-gen2-new}, we have 
\beq
\label{w-s}
w_i = \sqrt{b_i} (n_1^{r_i})^{-s} = \sqrt{b_i} n_i^{-s},\quad i=2, \ldots, d.
\eeq 
Therefore,
\[\sum_{i=1}^d b_i n_i^{-2\text{Re}(s)}= \sum_{i=1}^d |w_i|^2 < 1,\] 
which, together with the fact that the abscissa of absolute convergence of the Dirichlet series $\sum_{i=1}^db_in_i^{-u}$ is $\sigma$, implies $s \in \overline{\mathbb H_\frac{\sigma}{2}}$. 
Hence, by \eqref{w-s}, $\mf w=f_{\mf b,\mf n}(s) \in f_{\mf b, \mf n}(\overline{\mathbb H_\frac{\sigma}{2}})$, and $V \subseteq f_{\mf b, \mf n}(\overline{\mathbb H_\frac{\sigma}{2}})\cup\{0\}$. 
\end{proof}
We can prove the converse of the above corollary in many particular cases, and we believe that the converse is true in general. However, a general proof remains unknown.

\section{Isometric isomorphism of multiplier algebras}\label{5}


 Given a subset $S\subseteq\mathbb C^d$, the {\em affine hull} of $S$, denoted by $\text{aff}(S)$, is defined as 
 \[\text{aff}(S): = \lambda + span(S-\lambda)\] for any $\lambda \in S$. Note that this definition of an affine hull is independent of the choice of $\lambda$ (see \cite[Section~2.4]{Rudin}). 
The dimension and codimension of the affine hull of $S$ are given by the dimensions of the space $\text{span}(S-\lambda)$ and the quotient space $\mathbb C^d \slash \text{span}(S-\lambda)$, respectively, for any $\lambda \in S$. It turns out that, for isometric isomorphisms of multiplier algebras, the affine dimension and the affine codimension of the associated multiplier variety are a numerical invariant. This motivates us to compute the affine dimensions of multiplier varieties.

\begin{lemma}\label{affine-dim}
Let $d \in \mathbb N \cup \{\infty\}$, and let $(\mf b, \mf n)$ be a pair of weight and frequency data of length $d$.  If $V$ is the multiplier variety associated with $K_{\mf b, \mf n}$, then the dimension and the codimension of $\text{aff}(V)$ are $d$ and $0$, respectively.
\end{lemma}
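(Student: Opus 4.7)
The plan is to exploit the inclusion $\{\mf 0\} \cup f_{\mf b, \mf n}(\mathbb H_0) \subseteq V$, which follows from Remark~\ref{0-in-V} and the definition of the multiplier variety. Since $\mf 0 \in V$, the affine hull $\text{aff}(V)$ equals $\text{span}(V)$, and the task reduces to showing that $\text{span}(V)$ has dimension $d$ and that its codimension in $\mathbb C^d$ vanishes.

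My first step would be to pick distinct positive reals, say $t_k = k$ for $k \Ge 1$, and consider the vectors $v_k := f_{\mf b, \mf n}(t_k) = (\sqrt{b_j}\, n_j^{-k})_{j=1}^d$, each of which lies in $V$. To establish linear independence of the $v_k$'s, I would assume a finite linear relation $\sum_{k=1}^N c_k v_k = \mf 0$; comparing the $j$-th coordinate and dividing by $\sqrt{b_j} > 0$ yields $\sum_{k=1}^N c_k x_j^k = 0$, where $x_j := 1/n_j$. The polynomial $p(x) := \sum_{k=1}^N c_k x^k$ then vanishes at $0$ as well as at every $x_j$; since the $n_j$'s are distinct, the $x_j$'s are distinct and nonzero, and selecting $d \Ge N$ (or infinitely many, if $d = \infty$) of them forces $p$ to have strictly more roots than its degree, so $p \equiv 0$ and all $c_k = 0$. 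This produces $d$ linearly independent vectors in $V$, giving $\dim \text{aff}(V) = d$.

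For the codimension, the finite-$d$ case is immediate from the dimension count above. For $d = \infty$, I would argue in $\ell^2$: any $w = (w_j) \in \ell^2$ orthogonal to every $f_{\mf b, \mf n}(s)$ with $s \in \mathbb H_0$ produces, after the change of variable $u = \bar s$, a Dirichlet series $\sum_j w_j \sqrt{b_j}\, n_j^{-u}$ that vanishes identically on $\mathbb H_0$. By Cauchy--Schwarz combined with property $(\mf P)$, this series converges absolutely on $\mathbb H_0$, so the uniqueness of Dirichlet series (Proposition~\ref{uniq-dir}) forces each $w_j \sqrt{b_j} = 0$ and hence $w = \mf 0$. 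Thus the orthogonal complement of $V$ in $\ell^2$ is trivial, giving codimension $0$ in the Hilbert-space sense.

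The main conceptual subtlety I anticipate is the $d = \infty$ case, where ``codimension $0$'' must be interpreted via closed (rather than algebraic) spans; the Dirichlet-series uniqueness step handles this cleanly, and it is essentially the only place in the argument where property $(\mf P)$ is used nontrivially. The finite-$d$ case, by contrast, boils down entirely to a Vandermonde-style polynomial computation applied to the points $x_j = 1/n_j$.
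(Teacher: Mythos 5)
Your proof is correct, and the two halves of it have different relationships to the paper's argument. For the $d=\infty$ codimension step you do exactly what the paper does throughout: take $w$ in the orthogonal complement of $\operatorname{span}(V)$, pair it against $f_{\mf b,\mf n}(s)$, observe that the resulting expression is a Dirichlet series vanishing on $\mathbb H_0$ (your Cauchy--Schwarz check of absolute convergence via property $(\mf P)$ is a worthwhile detail the paper leaves implicit), and invoke Proposition~\ref{uniq-dir} together with $b_j>0$ to force $w=\mf 0$. The Vandermonde portion, however, is a genuinely different route for the dimension lower bound: evaluating $f_{\mf b,\mf n}$ at $t_k=k$ and reducing linear relations to a polynomial $\sum_k c_k x^k$ vanishing at the $d+1$ distinct points $0,1/n_1,\ldots,1/n_d$ is elementary and avoids Dirichlet-series uniqueness entirely in the finite-$d$ case. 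What this buys is a more hands-on argument with no appeal to Proposition~\ref{uniq-dir} when $d<\infty$; what it costs is redundancy, since the orthogonal-complement argument you give for $d=\infty$ works verbatim for finite $d$ and already yields both dimension and codimension in one stroke (in $\mathbb C^d$, $\operatorname{span}(V)^\perp=\{0\}$ forces $\operatorname{span}(V)=\mathbb C^d$). The paper opts for that single unified argument, which is why its proof is shorter; your version is correct but does the finite case twice over. One small caveat worth flagging: when you pass from ``the $w_j\sqrt{b_j}$ are the coefficients of a vanishing Dirichlet series'' to ``$w_j=0$'', you are using that the $n_j$ are distinct so the Dirichlet series is genuinely supported on distinct frequencies, and that $b_j>0$ so the factor $\sqrt{b_j}$ can be cancelled; both are guaranteed by property $(\mf P)$, but property $(\mf P)$ is thus used in more than just the ``nontrivial'' convergence spot you identify.
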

\begin{proof}
By Remark~\ref{0-in-V}, $0 \in V$, so that the affine hull of $V$ is given by  $\text{aff}(V)=\text{span}(V)$. In view of this, it suffices to show that $\overline{\text{span}}(V) = \mathbb C^d$. To that end, let $(x_i)_{i=1}^d \in \overline{\text{span}}(V)^\perp$. Let $\mf b=(b_j)_{j=1}^d$, $\mf n=(n_j)_{j=1}^d$ and the associated kernel $K_{\mf b,\mf n}$ be defined on $\mathbb H_\rho$. Since we know that $f_{\mf b, \mf n}(\mathbb H_\rho) \subseteq V$, we obtain
\beqn
\inp{f_{\mf b, \mf n}(s)}{(x_i)_{i=1}^d} \overset{\eqref{f-at-s}}= \sum_{i=1}^d \sqrt{b_i} \overline{x_i} n_i^{-s} = 0,\quad s \in \mathbb H_\rho.
\eeqn
It now follows from Proposition~\ref{uniq-dir} that $\sqrt{b_i} \overline{x_i} = 0$ for all $i$. Since $b_i$'s are positive, we conclude that $x_i= 0$ for all $i$. This completes the proof.
\end{proof}
Recall that for a variety $V$ in $\mathbb B_d$, $\mathcal M_{V}$ is the restriction of $\clm(H^2_d)$ to $V$, where $H^2_d$ is the Drury-Arveson space over $\mathbb B_d$.
\begin{proposition}
\label{never-isom-iso}
Let $d_1, d_2\in \mathbb N \cup \{\infty\}$ with $d_1\ne d_2$. Let $(\mf b, \mf n)$ and $(\mf c, \mf m)$ be two pairs of weight and frequency data of length $d_1$ and $d_2$, respectively. If $V_1$ and $V_2$ are the multiplier varieties associated with $K_{\mf b, \mf n}$ and $K_{\mf c, \mf m}$, respectively, then $\mathcal{M}_{V_1}$ and $\mathcal{M}_{V_2}$ are not isometrically isomorphic.
\end{proposition}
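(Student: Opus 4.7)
The strategy is to identify the ambient dimension $d$ as an algebraic invariant of $\mathcal{M}_{V}$ whose value can be read off from Lemma~\ref{affine-dim}; once this is done, the hypothesis $d_1 \ne d_2$ directly rules out any isometric isomorphism.

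Since $0 \in V_i$ (Remark~\ref{0-in-V}), evaluation at $0$ gives a distinguished character $\chi_0^i$ of $\mathcal{M}_{V_i}$ with maximal ideal $J_0^i = \ker \chi_0^i$. The proposed invariant is the cotangent dimension $\dim J_0^i / \overline{(J_0^i)^2}$, where the closure is taken in the natural topology of $\mathcal{M}_{V_i}$. Every multiplier on $V_i$ is the restriction of a Drury--Arveson multiplier admitting a Taylor expansion at $0$; so modulo $\overline{(J_0^i)^2}$ only the linear part survives, which is spanned by the coordinate restrictions $z_1|_{V_i}, \ldots, z_{d_i}|_{V_i}$. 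This gives $\dim J_0^i/\overline{(J_0^i)^2} \le d_i$. Conversely, by Lemma~\ref{affine-dim} the variety $V_i$ spans $\mathbb{C}^{d_i}$ affinely, so no nontrivial linear relation holds among $z_1|_{V_i}, \ldots, z_{d_i}|_{V_i}$, yielding $d_i$ linearly independent classes in $J_0^i/\overline{(J_0^i)^2}$. Hence the cotangent dimension equals exactly $d_i$.

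Any isometric isomorphism $\Phi: \mathcal{M}_{V_1} \to \mathcal{M}_{V_2}$ sends characters to characters. After post-composing $\Phi$ with a suitable automorphism of $\mathcal{M}_{V_2}$ to align the distinguished characters (using that ball automorphisms restricting to $V$ induce algebra automorphisms, as in~\cite{DRS, Sa-Sh}), $\Phi$ matches $\chi_0^1$ with $\chi_0^2$, sends $J_0^1$ isometrically onto $J_0^2$ and $\overline{(J_0^1)^2}$ onto $\overline{(J_0^2)^2}$, and therefore preserves the cotangent dimension. This forces $d_1 = d_2$, contradicting the hypothesis.

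The main obstacle is making the quotient $J_0/\overline{(J_0)^2}$ rigorous when $d_i = \infty$ (the correct choice of closure — norm or weak-$\star$ — must be tracked carefully) and justifying that $\chi_0^1$ can be matched with $\chi_0^2$ by $\Phi$; the latter amounts to comparing the character space of $\mathcal{M}_V$ with $V$ itself together with the transitive action of ball automorphisms on interior points of $V$, a standard feature within the Davidson--Ramsey--Shalit framework \cite{DRS, Sa-Sh, DHS}.
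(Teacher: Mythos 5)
Your strategy is in the same spirit as the paper's---the key invariant is the affine dimension of the variety, computed in Lemma~\ref{affine-dim}---but your route to extracting it from the algebra is genuinely different, and it has a gap. The paper simply embeds the lower-dimensional variety into $\mathbb B_{d_2}$, observes by Lemma~\ref{affine-dim} that $\dim(\mathrm{aff}(V_1))=d_1\ne d_2=\dim(\mathrm{aff}(V_2))$, and then invokes two off-the-shelf results: \cite[Lemma~4.2]{Sa-Sh} (affine dimension is an invariant under ball automorphisms), and \cite[Theorem~4.5]{DRS} (isometric isomorphism of $\mathcal M_{V_1}$ and $\mathcal M_{V_2}$ forces a ball automorphism carrying $V_1$ onto $V_2$). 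No cotangent spaces, no characters, no alignment.

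The gap in your argument is the ``alignment'' step. You claim one can post-compose with an automorphism of $\mathcal M_{V_2}$ to move $\Phi(\chi_0^1)$ to $\chi_0^2$, justified by a ``transitive action of ball automorphisms on interior points of $V$.'' That transitivity is false: automorphisms of $\mathcal M_{V_2}$ correspond (in the DRS picture) to ball automorphisms that preserve $V_2$ setwise, and the stabilizer of $V_2$ in $\mathrm{Aut}(\mathbb B_{d_2})$ need not act transitively on $V_2$---indeed for the varieties in this paper it is typically very small. A cleaner repair is available and avoids alignment entirely: since the affine hull of $V$ is independent of the basepoint, $\mathrm{span}(V_2-w)=\mathbb C^{d_2}$ for \emph{every} $w\in V_2$, so the cotangent dimension at any point of $V_2$ is $d_2$; you then need only that $\chi_0^1\circ\Phi^{-1}$ is evaluation at \emph{some} point of $V_2$. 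But that in turn requires that $\Phi$ is weak-$\star$ continuous (or that every character of $\mathcal M_{V_2}$ of the relevant type is an evaluation on $V_2$), which is itself a nontrivial input from the DRS/Salomon--Shalit framework. Once you are invoking that machinery anyway, you are better served by citing \cite[Theorem~4.5]{DRS} directly, as the paper does. You also correctly flag that the choice of closure in $J_0/\overline{J_0^{\,2}}$ needs care when $d=\infty$; that issue is genuine but secondary to the alignment problem.
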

\begin{proof} 
Without loss of generality, assume that $d_1 < d_2$. Then $V_1$ is naturally embedded in $\mathbb B_{d_2}$, so that both the varieties $V_1$ and $V_2$ lie in $\mathbb B_{d_2}$.  
By Lemma~\ref{affine-dim}, 
    \[
    \dim(\text{aff}(V_1))=d_1\neq d_2=\dim (\text{aff}(V_2)).
    \]
so that by \cite[Lemma~4.2]{Sa-Sh}, there is no automorphism of $\mathbb B_{d_2}$ that maps $V_1$ onto $V_2$. Therefore, by \cite[Theorem~4.6 \& Theorem~4.8]{Sa-Sh}, $\mathcal M_{V_1}$ and $\mathcal M_{V_2}$ are not isometrically isomorphic. This completes the proof.
\end{proof}

By the result above, in addressing the isometric isomorphism problem for multiplier algebras, it suffices to consider CNP Dirichlet series kernels with the same frequency length.
A natural approach to this problem is to determine when the associated multiplier varieties coincide. Recall that in the case where the logarithms of the frequency data are linearly independent over $\mathbb Q$, the multiplier variety is always the full ball $\mathbb B_d$, and hence they are equal. In the dependent case, a similar conclusion holds under the notion of similar pattern. Recall that a pair of weight and frequency data $(\mf b,\mf n)$ and $(\mf c, \mf m)$ of  the same length admit a similar pattern if:  
\begin{itemize}
\item $\mathcal J(\mf n) = \mathcal J(\mf m)$, and
\item for each $J \in \mathcal J(\mf n)$, the unique partition $\{J_1, J_2\}$ and the unique $|J|$-tuple $(\beta_i)_{i \in J}$ associated with $J$ remain the same whether $J$ is viewed as a subset of $\mathcal J(\mf n)$ or $\mathcal J(\mf m)$, and the following identity holds:
\beq
\label{assump-b-and-c}
\prod\limits_{i \in J_1}
 		c_i^{\beta_i}\prod\limits_{i \in J_2}  
   b_{i}^{\beta_i}=\prod\limits_{i \in J_2}c_i^{\beta_i}\prod\limits_{i \in J_1}b_i^{\beta_i}.
\eeq
\end{itemize}

An observant reader must have noticed already that if $\mf n$ and $\mf m$ are such that the sets $\{\ln n_i: i=1,\dots,d\}$ and $\{\ln m_i: i=1\dots,d\}$ are linearly independent over $\mathbb Q$, then $\mathcal J(\mf n)$ and $\mathcal J(\mf m)$ are empty. Consequently,  by definition, for any weight data $\mf b$ and $\mf c$, the pairs $(\mf b, \mf n)$ and $(\mf c, \mf m)$ admit a similar pattern in a trivial sense. Thus, nontrivial examples arise only in the dependent case. We also note that the above notion of similarity defines an equivalence relation on the collection of pairs consisting of weight data and frequency data associated with CNP Dirichlet series kernels. To keep things in perspective, consider the following examples. 
\begin{example}
Let $d = 3$, and consider $\mf n = (2, 3, 6)$ and $\mf m = (m_1, m_2, m_3)$, where $m_3=m_1m_2$ and $\{\ln m_1, \ln m_2\}$ is linearly independent over $\mathbb Q$. Let $\mf b = (b_1,b_2,b_3)$ and $\mf c = (c_1,c_2,c_3)$ be triples of positive real numbers.
Then $\mathcal J(\mf n) = \mathcal J(\mf m) = \{\{1,2,3\}\}$, and the unique partition and triple associated with $\{1,2,3\}$ are $\{ \{1,2\}, \{3\} \}$ and $(1,1,1)$, respectively, whether $\{1,2,3\}$ is viewed as an element of $\mathcal J(\mf n)$ or $\mathcal J(\mf m)$. Then, $(\mf b, \mf n)$ and $(\mf c, \mf m)$ admit a similar pattern if and only if $b_3c_1c_2=c_3b_1b_2$. 
\end{example}
\begin{Non-example}
Consider $\mf n = (q_1, q_2, q_1^2 q_2)$ and $\mf{m} = (q_1, q_2, q_1 q_2^2)$, where $q_1, q_2\Ge 2$ and the set $\{\ln q_1, \ln q_2\}$ is linearly independent of $\mathbb Q$. In this case, $\mathcal J(\mf n) = \mathcal J(\mf m) = \{\{1,2,3\}\}$. However, the unique triple associated with $\{1,2,3\}$ as an element of $\mathcal J(\mf n)$ is $(2,1,1)$, while as an element of $\mathcal J(\mf m)$ it is $(1,2,1)$. Thus, for any $3$-tuples $\mf b$ and $\mf c$ of weight data, $(\mf b, \mf n)$ and $(\mf c, \mf m)$ do not admit a similar pattern. 
\end{Non-example}

The next result completely characterizes when two multiplier varieties coincide. 

\begin{proposition}\label{equality of varieties}
Let $d \in \ntwo \cup \{\infty\}$. Let  $(\mf b, \mf n)$ and $(\mf c, \mf m)$ be two pairs of weight and frequency data of length d. Suppose that $V_1$ and $V_2$ are the multiplier varieties associated with $K_{\mf b, \mf n}$ and $K_{\mf c, \mf m}$, respectively.
Then the following statements are equivalent.

\textup{(i)} The pairs $(\mf b, \mf n)$ and $(\mf c, \mf m)$ admit a similar pattern. 

\textup{(ii)} $V_1=V_2$.
\end{proposition}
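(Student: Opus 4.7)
The plan is to leverage the explicit description of multiplier varieties from Theorem~\ref{variety-gen} (with Theorem~\ref{ind-case-variety} handling the independent case) together with the uniqueness of Dirichlet series (Proposition~\ref{uniq-dir}). First I would dispose of the case where $\{\ln n_j: j=1,\dots,d\}$ is $\mathbb Q$-linearly independent: here $\mathcal J(\mf n) = \emptyset$ and Theorem~\ref{ind-case-variety} gives $V_1 = \mathbb B_d$, so $V_1=V_2$ forces $\mathcal J(\mf m) = \emptyset$ as well (again by Theorem~\ref{ind-case-variety}), making both (i) and (ii) hold vacuously. Thus we may assume both $\{\ln n_j\}$ and $\{\ln m_j\}$ are $\mathbb Q$-linearly dependent, so that Theorem~\ref{variety-gen} applies on both sides.

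For the direction (i) $\Rightarrow$ (ii), fix any $J \in \mathcal J(\mf n) = \mathcal J(\mf m)$ with common partition $\{J_1, J_2\}$ and common tuple $(\beta_i)_{i \in J}$. The similar pattern identity \eqref{assump-b-and-c} is equivalent, after dividing both sides by $\prod_{i \in J_1 \cup J_2} b_i^{\beta_i}$ and taking positive square roots, to the existence of a common scalar $\lambda > 0$ with $\prod_{i \in J_k} c_i^{\beta_i/2} = \lambda \prod_{i \in J_k} b_i^{\beta_i/2}$ for $k=1,2$. Let $q_{_J}$ and $q'_{_J}$ denote the polynomials from Theorem~\ref{variety-gen} built with weights $\mf b$ and $\mf c$ respectively; substituting the relation above into the definition of $q'_{_J}$ gives $q'_{_J} = \lambda\, q_{_J}$, hence $Z(q_{_J}) = Z(q'_{_J})$. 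Assembling over all $J \in \mathcal J(\mf n) = \mathcal J(\mf m)$ and invoking Theorem~\ref{variety-gen} yields $V_1 = V_2$.

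For the converse, which I expect to contain the main difficulty, assume $V_1 = V_2$ and pick any $J \in \mathcal J(\mf n)$ with partition $\{J_1, J_2\}$ and tuple $(\beta_i)_{i \in J}$. Using the inclusion $f_{\mf c, \mf m}(\mathbb H_0) \subseteq V_2 = V_1 \subseteq Z(q_{_J})$ and expanding $q_{_J}(f_{\mf c, \mf m}(s)) = 0$ yields a Dirichlet-series identity of the form
\[
A\, \Big(\prod_{i \in J_2} m_i^{\beta_i}\Big)^{-s} = B\, \Big(\prod_{i \in J_1} m_i^{\beta_i}\Big)^{-s},\qquad s \in \mathbb H_0,
\]
where $A = \prod_{i \in J_1} b_i^{\beta_i/2}\prod_{i \in J_2} c_i^{\beta_i/2}$ and $B = \prod_{i \in J_2} b_i^{\beta_i/2}\prod_{i \in J_1} c_i^{\beta_i/2}$ are both strictly positive. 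Proposition~\ref{uniq-dir} then forces $\prod_{i \in J_1} m_i^{\beta_i} = \prod_{i \in J_2} m_i^{\beta_i}$ (so $\{\ln m_i : i \in J\}$ is $\mathbb Q$-linearly dependent) together with $A = B$, which upon squaring is exactly \eqref{assump-b-and-c}.

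The remaining delicate step is to promote the $\mathbb Q$-linear dependence of $\{\ln m_i : i \in J\}$ to the stronger assertion $J \in \mathcal J(\mf m)$ with the same partition and tuple; this is where I expect to spend the most effort. I would argue by minimality: if some proper subset of $\{\ln m_i : i \in J\}$ were $\mathbb Q$-linearly dependent, choose a minimal such subset indexed by some $J' \subsetneq J$, so that $J' \in \mathcal J(\mf m)$. Running the symmetric version of the previous argument, namely evaluating the polynomial $q'_{_{J'}}$ (associated with $J'$ via $\mf c$) on $f_{\mf b, \mf n}(\mathbb H_0) \subseteq V_1 = V_2 \subseteq Z(q'_{_{J'}})$, would then force $\{\ln n_i : i \in J'\}$ to be $\mathbb Q$-linearly dependent, contradicting the minimality built into $J \in \mathcal J(\mf n)$. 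Hence $J \in \mathcal J(\mf m)$, and the uniqueness portion of Proposition~\ref{uniq-decom} applied to $\mf m$ identifies its associated partition and tuple with $\{J_1, J_2\}$ and $(\beta_i)_{i \in J}$. Swapping the roles of $(\mf b, \mf n)$ and $(\mf c, \mf m)$ gives the reverse inclusion $\mathcal J(\mf m) \subseteq \mathcal J(\mf n)$, completing the verification that the pairs admit a similar pattern.
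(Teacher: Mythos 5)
Your proposal is correct and follows essentially the same route as the paper: use the explicit formula for the varieties from Theorem~\ref{variety-gen} for (i)~$\Rightarrow$~(ii), and for the converse pull the polynomials $q_{_J}$ back under $f_{\mf c, \mf m}$ (the paper pulls $q_{_J}$ for $J\in\mathcal J(\mf m)$ back under $f_{\mf b, \mf n}$ --- a mirror image of your choice), apply uniqueness of Dirichlet series to extract both the multiplicative relation among the $m_i$'s and the weight identity, then finish with the same minimality contradiction to get $\mathcal J(\mf n)=\mathcal J(\mf m)$ together with matching partitions and tuples via Proposition~\ref{uniq-decom}. Your reformulation of the similar-pattern condition as $q'_{_J}=\lambda q_{_J}$ in the forward direction is a slightly cleaner packaging but is mathematically the same observation, and your explicit disposal of the rationally-independent case is a minor extra care that the paper leaves implicit.
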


\begin{proof}
Suppose that $\mf b=(b_j)_{j=1}^d$, $\mf n=(n_j)_{j=1}^d$, $\mf c=(c_j)_{j=1}^d$ and $\mf m=(m_j)_{j=1}^d$ are $d$-tuples, and the kernels $K_{\mf b,\mf n}$ and $K_{\mf c,\mf m}$ are defined on $\mathbb H_{\rho}$ and $\mathbb H_{\sigma}$, respectively. For the proof of $(i)\implies (ii)$, let $\mf z\in V_1$. Then $q_{_J}(\mf z)=0$, for every polynomial $q_{_J}$ associated with $J\in \mathcal J(\mf n)$, which implies that  
\beqn
 \prod\limits_{i \in J_1}
 		b_{i}^{\frac{\beta_i}{2}}\prod\limits_{i \in J_2}  
   z_i^{\beta_i}=\prod\limits_{i \in J_2}b_{i}^{\frac{\beta_i}{2}}\prod\limits_{i \in J_1}z_i^{\beta_i}, \ \text{ for all } J \in \mathcal J(\mf n).
\eeqn
Now, by the similarity assumption, we have   
\beqn
\prod\limits_{i \in J_2}c_{i}^{\frac{\beta_i}{2}}\prod\limits_{i \in J_1}z_i^{\beta_i}
= \frac{\prod\limits_{i \in J_2}
 		c_{i}^{\frac{\beta_i}{2}}\prod\limits_{i \in J_1}  
   b_{i}^{\frac{\beta_i}{2}}}{\prod\limits_{i \in J_2}b_{i}^{\frac{\beta_i}{2}}}\prod\limits_{i \in J_2}z_i^{\beta_i} \overset{\eqref{assump-b-and-c}}= 
   \prod\limits_{i \in J_1}
 		c_{i}^{\frac{\beta_i}{2}} \prod\limits_{i \in J_2}  
   z_i^{\beta_i}, \ \text{ for all } J \in \mathcal J(\mf m),
\eeqn
and hence $q_{_J}(\mf z) = 0$ for all polynomial $q_{_J}$ associated with  $J \in \mathcal J(\mf m)$. Therefore, by Theorem~\ref{variety-gen}, $\mf z \in V_2$, and thus $V_1 \subseteq V_2$. By symmetry, we also obtain $V_2 \subseteq V_1$, and hence $V_1 = V_2$.

\vspace{0.3em}
\noindent $(ii)\implies (i):$ By the definition of multiplier variety, $f_{\mf b, \mf n}(\mathbb H_{\rho})\subseteq V_1= V_2$. This implies that for every polynomial $q_{_J}$ associated with $J\in \mathcal J(\mf m)$, we have 
\[q_{_J}(f_{\mf b, \mf n}(s))= q_{_J}((\sqrt{b_i}n_i^{-s})_{i=1}^d)=0\ \text{ for all } s\in\mathbb H_{\rho}.\] 
If $\{J_1,J_2\}$ and $(\beta_i)_{i \in J}$ with $\text{gcd}(\beta_i)_{i \in J}=1$ are the unique partition and the unique tuple associated with $J$, then the expression of the polynomial $q_{_J}$ yields:
\[
\big(\prod_{i\in J_1}c_i^{\frac{\beta_i}{2}}\prod_{i\in J_2}b_i^{\frac{\beta_i}{2}}\big) \prod_{i\in J_2}(n_i^{\beta_i})^{-s}- \big(\prod_{i\in J_1}b_i^{\frac{\beta_i}{2}}\prod_{i\in J_2}c_i^{\frac{\beta_i}{2}}\big)\prod_{i\in J_1}(n_i^{\beta_i})^{-s}=0\ \text{for all } s\in \mathbb H_{\rho}.
\]
Since $b_i>0$ and $c_i>0$ for all $i$, the uniqueness of Dirichlet series implies that this identity holds if and only if  
\beq
\label{generating identity for n}
\prod_{i\in J_1}c_i^{\beta_i}\prod_{i\in J_2}b_i^{\beta_i}=\prod_{i\in J_1}b_i^{\beta_i}\prod_{i\in J_2}c_i^{\beta_i}\ \text{ and }
\prod_{i\in J_2}n_i^{\beta_i}= \prod_{i\in J_1}n_i^{\beta_i}.
\eeq
Thus, for each $J\in \mathcal J(\mf m)$, we obtain relations of the form ~\eqref{generating identity for n} determined by the unique partition and the $|J|$-tuple associated with $J$. By symmetry, considering $f_{\mf c, \mf m}(\mathbb H_{\sigma})\subseteq V_1$, one similarly derives that for each $J'\in \mathcal J(\mf n)$, with associated unique partition $\{J_1', J_2'\}$ and unique tuple $(\beta_i')_{i=1}^{|J'|}$ satisfying $\text{gcd}(\beta_i')_{i=1}^{|J'|}=1$, we have   
\beq
\label{generating identity for m}
\prod_{i\in J_1'}c_i^{\beta_i'}\prod_{i\in J_2'}b_i^{\beta_i'}=\prod_{i\in J_1'}b_i^{\beta_i'}\prod_{i\in J_2'}c_i^{\beta_i'}\ \text{ and }
\prod_{i\in J_2'}m_i^{\beta_i'}= \prod_{i\in J_1'}m_i^{\beta_i'}.
\eeq

To conclude $(\mf b, \mf n)$ and $(\mf c, \mf m)$ admit a similar pattern, let $J\in \mathcal J(\mf m)$ with associated unique partition $\{J_1,J_2\}$ and unique tuple $(\beta_i)_{i \in J}$ satisfying $\text{gcd}(\beta_i)_{i \in J}=1$. Then, by ~\eqref{generating identity for n}, the set $\{\ln n_i: i\in J\}$ is linearly dependent over $\mathbb Q$. Thus, there exists $\tilde{J}\subseteq J$ such that $\tilde{J}\in \mathcal J(\mf n)$. We claim that $\tilde{J}= J$. Suppose not, that is, $\tilde{J}\subsetneq J$. Since $\tilde{J}\in \mathcal J(\mf n)$, by ~\eqref{generating identity for m}, the set
$\{\ln m_i: i\in \tilde{J}\}$ is linearly dependent over $\mathbb Q$, which is a contradiction as $J\in \mathcal J(\mf m)$ and $\tilde{J}$ is a proper subset of $J$. Therefore, $J=\tilde{J}$, and hence $J \in \mathcal J(\mf n)$, proving that $\mathcal J(\mf m)\subseteq \mathcal J(\mf n)$. Moreover, by Proposition~\ref{uniq-decom} and ~\eqref{generating identity for n}, the unique partition and the tuple associated with $J$ remain unchanged when $J$ is regarded as a member of either $\mathcal J(\mf m)$ or $\mathcal J(\mf n)$, and the identity \eqref{assump-b-and-c} holds. 
By symmetry, we also obtain $\mathcal J(\mf n)\subseteq \mathcal J(\mf m)$, and thus $\mathcal J(\mf n)=\mathcal J(\mf m)$. Therefore, $(\mf b, \mf n)$ and $(\mf c, \mf m)$ admit a similar pattern. This completes the proof. 
\end{proof}

As an immediate consequence of the preceding result, we obtain the following characterization of isometric isomorphisms between multiplier algebras of CNP Dirichlet series kernels.   
For the multiplier variety $V$ associated with $K_{\mf b, \mf n}$, recall that 
the composition map $C_{f_{\mf b,\mf n}}$, induced by $f_{\mf b, \mf n}$, 
 \beq 
\label{Composition}
 C_{f_{\mf b, \mf n}}: \mathcal M_V \to \mathcal M(\clh(K_{\mf b, \mf n})),\  C_{f_{\mf b, \mf n}}(h) = h \circ f_{\mf b, \mf n},\quad h \in \mathcal M_V
\eeq
is an isometric isomorphism (refer to \cite[section 5]{MS} for more details). 
 
\begin{theorem}\label{var-equal-set} 
Let $d \in \ntwo \cup \{\infty\}$. Let  $(\mf b, \mf n)$ and $(\mf c, \mf m)$ be two pairs of weight and frequency data of length d. Suppose that $V_1$ and $V_2$ are the multiplier varieties associated with $K_{\mf b, \mf n}$ and $K_{\mf c, \mf m}$, respectively.
Then the following statements are equivalent.

\textup{(i)} The pairs $(\mf b, \mf n)$ and $(\mf c, \mf m)$ admit a similar pattern.

\textup{(ii)} The identity map $\mathrm{Id}: \mathcal M_{V_1}\to \mathcal M_{V_2}$ is an isometric isomorphism.

\textup{(iii)} There exists an isometric isomorphism $\Phi: \mathcal M (\mathcal H(K_{\mf b, \mf n}))\to \mathcal M(\mathcal H(K_{\mf c, \mf m}))$ such that 
\[C_{f_{\mf c, \mf m}}^{-1}\circ \Phi\circ C_{f_{\mf b, \mf n}}= \mathrm{Id}: \mathcal M_{V_1}\to \mathcal M_{V_2},\] 
where $C_{f_{\mf b, \mf n}}$ is as in ~\eqref{Composition}.
\end{theorem}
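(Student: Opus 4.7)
The plan is to reduce all three conditions to the equality $V_1 = V_2$ of the associated multiplier varieties, exploiting Proposition~\ref{equality of varieties}, which already shows that (i) is equivalent to $V_1 = V_2$. It thus suffices to prove that $V_1 = V_2$, (ii), and (iii) are pairwise equivalent.

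For the equivalence of $V_1 = V_2$ with (ii), the plan is to use the standard quotient realization $\mathcal M_{V_i} \cong \mathcal M_d / I_{V_i}$, where $I_{V_i} := \{g \in \mathcal M_d : g|_{V_i} \equiv 0\}$. Under this identification, the identity map in (ii) is precisely the map induced from the identity on $\mathcal M_d$. If $V_1 = V_2$, then the two quotients are literally the same Banach algebra, so the identity is trivially an isometric isomorphism. Conversely, a direct check shows that the identity descends to a well-defined homomorphism $\mathcal M_{V_1} \to \mathcal M_{V_2}$ if and only if $I_{V_1} \subseteq I_{V_2}$, and is injective if and only if $I_{V_2} \subseteq I_{V_1}$; hence (ii) forces $I_{V_1} = I_{V_2}$. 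A short verification directly from the defining formula~\eqref{def-V}, using that $f_{\mf b, \mf n}(\mathbb H_0) \subseteq V$, shows that each $V_i$ is the common zero locus in $\mathbb B_d$ of its ideal $I_{V_i}$; consequently $I_{V_1} = I_{V_2}$ yields $V_1 = V_2$. The isometric property is automatic, since both quotient norms are inherited from the common ambient norm on $\mathcal M_d$.

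For the equivalence of (ii) with (iii), the composition maps $C_{f_{\mf b, \mf n}}$ and $C_{f_{\mf c, \mf m}}$ from~\eqref{Composition} are already known to be isometric isomorphisms, so the two conditions transport into one another. Given (ii), set $\Phi := C_{f_{\mf c, \mf m}} \circ \mathrm{Id} \circ C_{f_{\mf b, \mf n}}^{-1}$; this is a composition of isometric isomorphisms, hence itself isometric, and it satisfies the required relation in (iii) tautologically. Conversely, given (iii), rearranging the stated identity expresses $\mathrm{Id}: \mathcal M_{V_1} \to \mathcal M_{V_2}$ as $C_{f_{\mf c, \mf m}}^{-1} \circ \Phi \circ C_{f_{\mf b, \mf n}}$, again a composition of isometric isomorphisms, so (ii) follows.

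The only delicate point is the direction where (ii) implies $V_1 = V_2$, which requires carefully interpreting the identity map at the level of $\mathcal M_d$-quotients and then invoking that each $V_i$ is recovered from $I_{V_i}$ via its defining formula. Once this quotient picture is in place, the theorem reduces to a bookkeeping exercise combining Proposition~\ref{equality of varieties} with transport along the two composition isomorphisms.
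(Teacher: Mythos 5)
Your proposal is correct and follows essentially the same route as the paper: the equivalence of (i) and (ii) is delegated to Proposition~\ref{equality of varieties} (via the bridge $V_1 = V_2$), and (ii)~$\Leftrightarrow$~(iii) is transported along the two composition isomorphisms $C_{f_{\mf b, \mf n}}$ and $C_{f_{\mf c, \mf m}}$, exactly as in the paper's commuting-diagram argument. The only difference is that you explicitly justify, via the quotient picture $\mathcal M_{V_i} \cong \mathcal M_d / I_{V_i}$ and the observation that $V_i$ is recovered as the zero locus of $I_{V_i}$ in $\mathbb B_d$, why $V_1=V_2$ is equivalent to~(ii); the paper treats this equivalence as immediate.
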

\begin{proof}
The equivalence of (i) and (ii) follows directly from Proposition~\ref{equality of varieties}. The equivalence of \textup{(ii)} and \textup{(iii)} follows from the commutativity of the diagram below:
\begin{center}
\begin{tikzcd}
\mathcal{M}_{V_1} \arrow[r, "\mathrm{Id}"] \arrow[d, "C_{f_{\mf b, \mf n}}"]
&\mathcal M_{V_2}  \arrow[d, "C_{f_{\mf c, \mf m}}"] \\
\mathcal M(\clh(K_{\mf b, \mf n})) \arrow[r, "\Phi"]
& \mathcal M(\clh(K_{\mf c, \mf m}))
\end{tikzcd}
\end{center}
where all the maps are isometric isomorphisms. 
\end{proof}

We now present a family of examples that illustrate Theorem~\ref{var-equal-set}. In each case, we conclude--by virtue of Theorem~\ref{var-equal-set}--that the corresponding multiplier algebras are isometrically isomorphic. 
\begin{example}
\label{iso-isom-example}
(i) Let $(\alpha_1, \alpha_2) \in \mathbb N^2$, and define $\mf n = (2, 3, 2^{\alpha_1}3^{\alpha_2})$ and $\mf m = (q_1, q_2, q_1^{\alpha_1}q_2^{\alpha_2})$, where the set $\{\ln q_1, \ln q_2\}$ is linearly independent over $\mathbb Q$. Then $\mathcal{J}(\mf n) = \mathcal{J}(\mf m) = \{\{1,2,3\}\}$. The unique partition and the unique tuple associated with $\{1,2,3\}$, when viewed as an element of either $\mathcal{J}(\mf n)$ or $\mathcal{J}(\mf m)$, are $\{ \{1,2\}, \{3\}\}$ and $(\alpha_1, \alpha_2, 1)$, respectively. Let $\mf b=(b_1,b_2,b_3)$ and $\mf c=(c_1,c_2,c_3)$ be triples of positive real numbers. Then $(\mf b,\mf n)$ and $(\mf c,\mf m)$ admit a similar pattern if and only if 
\[b_3c_1^{\alpha_1}c_2^{\alpha_2} = c_3 b_1^{\alpha_1}b_2^{\alpha_2}.\] 
Therefore, by Theorem~\ref{var-equal-set}, if the above identity holds, then the multiplier algebras $\mathcal M(\mathcal H(K_{\mf b, \mf n}))$ and $\mathcal M(\mathcal H(K_{\mf c, \mf m}))$ are isometrically isomorphic.


(ii) Let $d\Ge 4$ be an integer or $d= \infty$, and let $\{p_n\}_{n=1}^{\infty}$ be the enumeration of primes in increasing order. Define \[\mf n=(n_j)_{j=1}^d = (2, 3, 4, p_4, p_5,\dots p_d) \text{ and } \mf m =(m_j)_{j=1}^d= (q_1, q_2, q_1^2, p_{n_0}, p_{n_0+1},\dots, p_{d+n_0-4}),\] where $p_{n_0} > \text{max}\{q_1, q_2\}$ and $\{\ln q_1, \ln q_2\}$ is a linearly independent set over $\mathbb Q$. In this case, it is easy to see that $\mathcal J(\mf n) = \mathcal J(\mf m) = \{\{1,3\}\}$, and the unique pair associated with $\{1,3\}$ when viewed as an element in $\mathcal J(\mf n)$ or $\mathcal J(\mf m)$ is $(2,1)$. Let $\mf b = (b_j)_{j=1}^d$ and $\mf c = (c_j)_{j=1}^d$ be $d$-tuples of positive real numbers. If $d=\infty$, assume that $\sigma_a(\sum_j b_jn_j^{-s})<\infty$ and $\sigma_a(\sum_j c_jm_j^{-s})<\infty$. Then observe that $(\mf b, \mf n)$ and $(\mf c, \mf m)$ admit a similar pattern if and only if $b_1^2c_3 = c_1^2b_3$. Therefore, by Theorem~\ref{var-equal-set}, if $b_1^2c_3 = c_1^2b_3$, then $\mathcal M(\mathcal H(K_{\mf b, \mf n}))$ and $\mathcal M(\mathcal H(K_{\mf c, \mf m}))$ are isometrically isomorphic.

(iii) Consider $\mf n = (6, 10, 21, 35, 360)$ and $\mf m = (q_1, q_2, q_3, q_4, q_1^2q_2)$ such that $q_1q_4 = q_2q_3$, and every proper subset of $\{\ln q_i: i=1,\dots, 4\}$ is linearly independent over $\mathbb Q$. Let $\mf b=(b_j)_{j=1}^4$ and $\mf c=(c_j)_{j=1}^4$ be tuples of positive real numbers. Then 
\[\mathcal J(\mf n) = \mathcal J(\mf m) = \{\{1,2,3,4\}, \{1,2,5\}\}.\]
The unique partition and the unique tuple associated with $\{1,2,3,4\}$, when viewed as an element in $\mathcal J(\mf n)$ or $\mathcal J(\mf m)$, are $\{\{1,4\}, \{2,3\}\}$ and $(1,1,1,1)$, respectively. Similarly, the unique partition and the unique tuple associated with $\{1,2,5\}$, when viewed as an element in $\mathcal J(\mf n)$ or $\mathcal J(\mf m)$, are $\{\{1,2\}, \{5\}\}$ and $(2,1,1)$, respectively. Thus, the pair $(\mf b,\mf n)$ and $(\mf c,\mf m)$ admit a similar pattern if and only if 
\[b_2b_3c_1c_4 = c_2c_3 b_1b_4\ \text{ and }\ b_1^2b_2c_5 = c_1^2c_2b_5.\] 
Therefore, if $\mf b$ and $\mf c$ are chosen to satisfy these identities, then by Theorem~\ref{var-equal-set}, $\mathcal M(\mathcal H(K_{\mf b, \mf n}))$ and $\mathcal M(\mathcal H(K_{\mf c, \mf m}))$ are isometrically isomorphic.
\end{example}

\section{CNP kernels with generating frequencies}\label{6}

In this section, we consider a specific class of CNP Dirichlet series kernels--determined by particular choices of weight and frequency data--and study the isomorphism problem for their multiplier algebras. One of our objectives is to provide a negative answer to Question 2, as stated in the Introduction. We now proceed to establish this result. 

\begin{proposition}
\label{negative}
Let $3\Le d\in \mathbb N$, and let $(\mf b, \mf n=(n_j)_{j=1}^d)$ be a pair of weight and frequency data of length $d$. Suppose that the set $\{\ln n_{i}: i=1,\dots, d-1\}$ is linearly independent over $\mathbb Q$, and there exist  $\emptyset\neq I\subseteq\{1,\ldots, d-1\}$ and a tuple $(\alpha_i)_{i\in I}$ of natural numbers such that
\beqn
n_d=\prod_{i\in I}n_i^{\alpha_i}.
\eeqn
Then $\mathcal M(\mathcal H(K_{\mf b, \mf n}))$ is not isomorphic to $\mathcal M_{d-1}$. 
\end{proposition}
\begin{proof}
We begin by determining the multiplier variety $V$ associated with $K_{\mf b, \mf n}$.
Under the assumptions on $\mf n$, we have $\mathcal J(\mf n) = \{I\cup\{d\}\}$. The unique partition and the unique tuple associated with $I\cup\{d\}$ are $\{I, \{d\} \}$ and $(\alpha_i)_{i\in I\cup\{d\}}$, respectively, with $\alpha_d=1$. Let $\mf b=(b_j)_{j=1}^d$, then by Theorem~\ref{variety-gen}, 
\beqn
    V &=& \mathbb B_d \cap Z\Big(\prod_{i\in I} b_i^{\frac{\alpha_i}{2}} z_d - \sqrt{b_d}\prod_{i\in I} z_i^{\alpha_i}\Big)\\
    &=& \mathbb B_d \cap \Big\{(z_1, z_2,\ldots, z_{d-1}, \sqrt{b_d} \prod_{i\in I} b_i^{-\frac{\alpha_i}{2}}z_i^{\alpha_i}): (z_1,z_2,\dots, z_{d-1})\in \mathbb C^{d-1} \Big\}.
\eeqn
We claim that the variety $V$ is not biholomorphic to $\mathbb B_{d-1}$. To establish this, define 
\beqn
    \tilde{V}:=\Big\{(z_1,z_2, \dots, z_{d-1}) \in\mathbb B_{d-1} \ : \ \sum_{i=1}^{d-1}|z_i|^2+ b_d \prod_{i\in I}b_i^{-\alpha_i} |z_i|^{2\alpha_i} < 1\Big\}.
\eeqn
Observe that $V$ can be realized as a graph over the domain $\tilde{V}\subset \mathbb C^{d-1}$. Consequently, $\tilde{V}$ and $V$ are biholomorphic through the map $\varphi:\Tilde{V}\to V$ defined by 
\beqn
    \varphi(z_1,z_2,\dots, z_{d-1})=\Big(z_1,z_2,\ldots,z_{d-1}, \sqrt{b_d} \prod_{i\in I}b_i^{-\frac{\alpha_i}{2}}z_i^{\alpha_i}\Big).
\eeqn
Therefore, to prove the claim, it suffices to show that $\tilde{V}$ and $\mathbb B_{d-1}$ are not biholomorphically equivalent. Observe that both $\Tilde{V}$ and $\mathbb B_{d-1}$ are bounded Reinhardt domains in $\mathbb C^{d-1}$. By \cite[Main Theorem]{Sunada}, $\tilde{V}$ and $\mathbb B_{d-1}$ are biholomorphically equivalent if and only if there exists a linear transformation of the form 
\[\psi:\mathbb C^{d-1}\rar \mathbb C^{d-1},\quad  \psi(z_1,z_2,\dots, z_{d-1}) = (r_1 z_{\sigma(1)},r_2 z_{\sigma(2)}, \dots, r_{d-1}z_{\sigma(d-1)}),\] 
 for some permutation $\sigma$ of $\{1,\ldots, d-1\}$ and constants $r_i>0$,  such that $\psi(\tilde{V} )= \mathbb B_{d-1}$. Suppose, for a contradiction, that such a linear map $\psi$ exists. Let $\{e_i:i=1, \dots,d-1\}$ be the standard basis of $\mathbb C^{d-1}$. 
 Note that the boundary $\partial\tilde{V}$ of $\tilde{V}$
is given by 
\[
   \partial\tilde{V}:=\Big\{(z_1,z_2,\ldots,z_{d-1})\in\mathbb C^{d-1}:\sum_{i=1}^{d-1}|z_i|^2+b_d\prod_{i\in I}b_i^{-\alpha_i}|z_i|^{2\alpha_i} = 1\Big\}.
   \]
Fix $i_0 \in I$. Then, $e_i\in \partial \tilde{V}$ for all $i \in  \{1,\ldots, d-1\} \setminus \{i_0\}$, so that
  $r_{\sigma^{-1}(i)} e_{\sigma^{-1}(i)} = \psi(ze_i)\in \partial \mathbb B_{d-1}$ for all $i \neq i_0$,
 and hence 
 \beq
 \label{ri=1-ppn}
 r_{\sigma^{-1}(i)} = 1, \quad i \neq i_0.
 \eeq
Since $\psi(\partial \tilde{V}) = \partial \mathbb B_{d-1}$, it follows that 
   \beqn
   \sum_{i=1}^{d-1} |z_i|^2 + b_d\prod_{i \in I}b_i^{-\alpha_i}|z_i|^{2\alpha_i}  = \sum_{i=1}^d r_i^2 |z_{\sigma(i)}|^2, \quad (z_1, z_2,\ldots z_{d-1}) \in \partial \tilde{V},
   \eeqn
 and hence, by \eqref{ri=1-ppn}, 
  \beqn
  |z_{i_0}|^2 + b_d\prod_{i \in I}b_i^{-\alpha_i}|z_i|^{2\alpha_i}  =  r_{\sigma^{-1}(i_0)}^2 |z_{i_0}|^2, \quad (z_1,z_2, \ldots z_{d-1}) \in \partial \tilde{V}.
  \eeqn
Since there exists a point $(z_1, \ldots, z_{d-1}) \in \partial \tilde{V}$ with $z_{i_0} \neq 0$, we get
 \beq
 \label{reduced-eqn}
  1 + b_db_{i_0}^{-\alpha_{i_0}}|z_{i_0}|^{2(\alpha_{i_0}-1)}\prod_{i \in I \setminus \{i_0\}}b_i^{-\alpha_i}|z_i|^{2\alpha_i}  =  r_{\sigma^{-1}(i_0)}^2, \quad 
  \eeq
for all $(z_1, \ldots z_{d-1}) \in \partial \tilde{V}$ with $z_{i_0} \neq 0$. Now, we show that 
\beq\label{r=1_for remaining}
r_{\sigma^{-1}(i_0)} = 1. 
\eeq
Suppose $I = \{i_0\}$. Since $\mf n$ is a tuple of distinct integers, $\alpha_{i_0} \Ge 2$. Then, letting $|z_{i_0}| \to 0$ in \eqref{reduced-eqn} yields that $r_{\sigma^{-1}(i_0)} = 1$. In the other case, if $|I| \Ge 2$, then there exists $1\Le j\Le d-1$ such that $j\in I \setminus \{i_0\}$. Applying \eqref{reduced-eqn} for a point $(z_1,z_2,\ldots,z_{d-1}) \in \partial \tilde{V}_1$ with $z_j = 0$  and $z_{i_0}\neq 0$ yields that $r_{\sigma^{-1}(i_0)} = 1$. This proves \eqref{r=1_for remaining}.
Therefore, by \eqref{ri=1-ppn} and \eqref{r=1_for remaining}, $r_i=1$ for all $i=1, \ldots, d-1$, and so 
  \beqn
  \psi(z_1,z_2,\dots, z_{d-1}) = (z_{\sigma(1)},z_{\sigma(2)},\dots, z_{\sigma(d-1)}), \quad (z_1,z_2,\dots, z_{d-1}) \in \mathbb C^{d-1}.
  \eeqn
This in particular implies that 
\beqn
  \psi^{-1}(\mathbb B_{d-1}) = \mathbb B_{d-1} = \tilde{V},
  \eeqn
which is false since $\tilde{V} \subsetneq \mathbb B_{d-1}$. Hence, $\tilde{V}$ is not biholomorphic to $\mathbb B_{d-1}$, and consequently, neither is $V$, as claimed.

 Suppose now, for a contradiction, that $\clm_V\cong\clm_{d-1}$. Since both $V$ and $\mathbb{B}_{d-1}$ are irreducible varieties, it follows from ~\cite[Theorem 5.6]{DRS} that $V$ and $\mathbb B_{d-1}$ must be biholomorphic, which we just ruled out. Therefore, $\clm_V \ncong \clm_{d-1}$. Moreover, since $\mathcal M_V \cong \mathcal M(\mathcal H(K_{\mf b, \mf n}))$, we conclude that 
 \[\mathcal M(\mathcal H(K_{\mf b, \mf n}))\ncong \mathcal M_{d-1}.\] 
 This completes the proof.  
\end{proof}

For the class of kernels considered in the above theorem, we obtain a complete classification of multiplier algebras up to isometric and algebraic isomorphism. Remarkably, within this setting, every algebraic isomorphism between multiplier algebras is automatically isometric, and such an isomorphism occurs precisely when the associated multiplier varieties coincide up to a permutation automorphism.  

\begin{theorem}\label{isometric isomorphism}
 Let $3\Le d\in \mathbb N$, and let $(\mf b, \mf n=(n_j)_{j=1}^d)$ and $(\mf c, \mf m=(m_j)_{j=1}^d)$ be two pairs of weight and frequency data of length d. Suppose the sets $\{\ln n_{i}: i=1,\dots,d-1\}$ and $\{ \ln m_i: i=1,\dots, d-1\}$ are linearly independent over $\mathbb Q$, and there exist non-empty subsets $I_1$, $I_2$ of $\{1,\ldots,d-1\}$ and tuples $(\alpha_i)_{i\in I_1}$, $(\beta_i)_{i\in I_2}$ of natural numbers such that
\beq\label{n-m-decomp}
n_d=\prod_{i\in I_1}n_i^{\alpha_i}\ \text{ and }\ m_d=\prod_{i\in I_2}m_i^{\beta_i}.
\eeq
 Then the following are equivalent:
\begin{itemize}
\item[(i)] $\mathcal{M}(\mathcal{H}(K_{\mf{b}, \mf{n}}))$ and $\mathcal{M}(\mathcal{H}(K_{\mf{c}, \mf{m}}))$ are isomorphic,
\item[(ii)] there exists a permutation $\sigma$ of the index set $\{1,\dots,d\}$ such that the pair $(\sigma(\mf b), \sigma(\mf n))$ and $(\mf c, \mf m)$ admit a similar pattern,
\item[(iii)] $\mathcal M(\mathcal H(K_{\mf b, \mf n}))$ and $\mathcal M(\mathcal H(K_{\mf c, \mf m}))$ are isometrically isomorphic. 
\end{itemize}  
\end{theorem}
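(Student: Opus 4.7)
\medskip
\textbf{Proof plan.} The plan is to prove the cycle $(iii) \Rightarrow (i) \Rightarrow (ii) \Rightarrow (iii)$; the implication $(iii) \Rightarrow (i)$ is immediate. For $(ii) \Rightarrow (iii)$, note that the kernel $K_{\mf b, \mf n}$ is unchanged under any permutation of the summation index, so $K_{\sigma(\mf b), \sigma(\mf n)} = K_{\mf b, \mf n}$ and consequently $\mathcal M(\mathcal H(K_{\mf b, \mf n})) = \mathcal M(\mathcal H(K_{\sigma(\mf b), \sigma(\mf n)}))$ on the nose. Applying Theorem~\ref{var-equal-set} to the pair $(\sigma(\mf b), \sigma(\mf n))$ and $(\mf c, \mf m)$, which admit a similar pattern by hypothesis, then yields an isometric isomorphism $\mathcal M(\mathcal H(K_{\sigma(\mf b), \sigma(\mf n)})) \cong \mathcal M(\mathcal H(K_{\mf c, \mf m}))$, establishing $(iii)$.

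For the main implication $(i) \Rightarrow (ii)$, I would first recall from the proof of Proposition~\ref{negative} that the multiplier variety $V_1$ of $K_{\mf b, \mf n}$ is the graph of $(z_1, \ldots, z_{d-1}) \mapsto \sqrt{b_d} \prod_{i=1}^{d-1} b_i^{-\alpha_i/2} z_i^{\alpha_i}$ over the bounded complete Reinhardt domain
\[
\widetilde V_1 = \Big\{z \in \mathbb B_{d-1} : \sum_{j=1}^{d-1} |z_j|^2 + b_d \prod_{j=1}^{d-1} b_j^{-\alpha_j} |z_j|^{2\alpha_j} < 1\Big\},
\]
and similarly $V_2 \cong \widetilde V_2$ is defined via $\mf c$, $\mf m$ and $(\beta_i)$. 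These graph varieties are irreducible. Assuming $\mathcal M(\mathcal H(K_{\mf b, \mf n})) \cong \mathcal M(\mathcal H(K_{\mf c, \mf m}))$ algebraically, we have $\mathcal M_{V_1} \cong \mathcal M_{V_2}$, and by \cite[Theorem~5.6]{DRS} this produces a biholomorphism between $V_1$ and $V_2$. Composing with the graph identifications $V_i \cong \widetilde V_i$ gives a biholomorphism $\widetilde V_1 \cong \widetilde V_2$ of bounded complete Reinhardt domains, and the main theorem of Sunada then guarantees the existence of a map
\[
\psi(z_1, \ldots, z_{d-1}) = (r_1 z_{\tau(1)}, \ldots, r_{d-1} z_{\tau(d-1)})
\]
with $r_j > 0$ and a permutation $\tau$ of $\{1, \ldots, d-1\}$ such that $\psi(\widetilde V_1) = \widetilde V_2$.

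Substituting $w_i = r_i z_{\tau(i)}$ in the defining inequality of $\widetilde V_2$ and equating with that of $\widetilde V_1$, I would first restrict to each coordinate axis to conclude that $r_i = 1$ for all $i$, and then compare the unique remaining mixed monomial via a logarithmic-coordinate argument on the common boundary in the positive orthant to derive $\beta_{\tau^{-1}(j)} = \alpha_j$ for every $j$ and $c_d \prod_i c_i^{-\beta_i} = b_d \prod_j b_j^{-\alpha_j}$. Extending $\tau$ to a permutation $\sigma$ of $\{1, \ldots, d\}$ with $\sigma(d) = d$, the exponent relations give $(\sigma(\mf n))_d = \prod_j (\sigma(\mf n))_j^{\beta_j}$ (matching the analogous relation for $\mf m$) and hence $\mathcal J(\sigma(\mf n)) = \mathcal J(\mf m) = \{\{1, \ldots, d\}\}$ with identical unique partition $\{\{1, \ldots, d-1\}, \{d\}\}$ and tuple $(\beta_1, \ldots, \beta_{d-1}, 1)$; the coefficient equality then rearranges to the similar-pattern identity \eqref{assump-b-and-c}, completing $(ii)$.

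The main obstacle is the rigidity step: two polynomials $P_1, P_2$ of the form $\sum_j a_j u_j + c \prod_j u_j^{\gamma_j}$ (with positive coefficients and integer exponents $\gamma_j \ge 1$) that define the same sub-level set $\{P < 1\}$ in $[0, \infty)^{d-1}$ must agree term by term. Axis restrictions immediately match the linear coefficients, but pinning down the mixed monomial requires verifying that the boundary $\{P_1 = 1\}$ intersected with $(0, \infty)^{d-1}$ is not contained in any affine hyperplane in $\log$-coordinates; this in turn follows from the fact that $\prod_j u_j^{\gamma_j}$ varies non-trivially along this $(d-2)$-dimensional hypersurface, forcing the trivial linear relation in $\log u_j$ and hence equality of both the exponents and the coefficient of the mixed term.
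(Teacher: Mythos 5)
Your proposal follows the paper's route step by step: reduce to the biholomorphism $\widetilde V_1 \cong \widetilde V_2$ of bounded Reinhardt domains via \cite[Theorem~5.6]{DRS} and the graph parametrization, invoke Sunada to get a permutation-and-scaling map $\psi$, kill the scalings $r_i$ by restricting to coordinate axes, and then read off exponent and coefficient relations from the boundary identity, finally extending $\tau$ by fixing $d$. The one place you are materially thinner than the paper is the rigidity step $\alpha_j = \beta_{\tau^{-1}(j)}$. After matching the quadratic terms, the boundary relation becomes, in $\log$-coordinates $u_j = \log |z_j|^2$, a linear identity $\sum_j (\alpha_j - \beta_{\tau^{-1}(j)})\, u_j = \text{const}$ on the $(d-2)$-dimensional hypersurface $\partial\widetilde V_1 \cap (0,\infty)^{d-1}$, and you assert this hypersurface cannot lie in an affine hyperplane because ``$\prod_j u_j^{\gamma_j}$ varies non-trivially'' --- but that is a restatement of what must be shown, not an argument. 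The paper closes this by an explicit construction: for a fixed $\epsilon \in (0,1)$, choose points on $\partial\widetilde V_1$ with $\sum_{i\neq j}|z_i|^2 = 1-\epsilon$, each $|z_i|$ ($i\neq j$) bounded below by a positive constant, and $|z_j| \in (0,\sqrt\epsilon)$ determined by the boundary equation; letting $|z_j|\to 0$ forces the product $\prod_i |z_i|^{2(\alpha_i - \beta_{\tau^{-1}(i)})}$ to $0$ or $\infty$ if $\alpha_j \neq \beta_{\tau^{-1}(j)}$, contradicting that it equals a fixed positive constant. You should supply an argument of this kind; once that is in place, the coefficient identity and the translation into ``similar pattern'' follow exactly as you outline.
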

\begin{proof}
Because $K_{\mf b, \mf n} = K_{\sigma(\mf b), \sigma(\mf n)}$ for any permutation $\sigma$ of the index set $\{1,\dots,d\}$, the implication $\textup{(ii)} \implies \textup{(iii)}$ follows from Theorem~\ref{var-equal-set}. The implication $\textup{(iii)} \implies \textup{(i)}$ is trivial. 

To complete the proof, we show that $\textup{(i)} \implies \textup{(ii)}$. To this end, let $V_1$ and $V_2$ be the multiplier varieties associated with $K_{\mf b, \mf n}$ and $K_{\mf c, \mf m}$, respectively. Since the algebras $\clm(\clh(K_{\mf b,\mf n}))$ and $\clm(\clh(K_{\mf c,\mf m}))$ are isomorphic, $\clm_{V_1}$ is isomorphic to $\clm_{V_2}$. Let $\mf b=(b_j)_{j=1}^d$ and $\mf c=(c_j)_{j=1}^d$, then by the assumptions on $\mf n$ and $\mf m$, as given in \eqref{n-m-decomp}, and by Theorem~\ref{variety-gen}, we have 
   \[
   V_1=\Big\{(z_1,z_2,\ldots,z_d)\in\mathbb B_d: z_d=\sqrt{b_d}\prod_{i\in I_1}b_i^{-\frac{\alpha_i}{2}}z_i^{\alpha_i}\Big\}
   \]
   and
   \[
   V_2=\Big\{(z_1,z_2,\ldots,z_d)\in\mathbb B_d: z_d=\sqrt{c_d}\prod_{i\in I_2}c_i^{-\frac{\beta_i}{2}}z_i^{\beta_i}\Big\}.
   \]
Since $V_1$ and $V_2$ are irreducible varieties and $\mathcal M_{V_1}\cong \mathcal M_{V_2}$, by \cite[Theorem 5.6]{DRS}, $V_1$ and $V_2$ are biholomorphically equivalent.  
Now, define the domains
   \[
   \tilde{V}_1:=\Big\{(z_1,z_2,\ldots,z_{d-1})\in\mathbb C^{d-1}:\sum_{i=1}^{d-1}|z_i|^2+b_d\prod_{i\in I_1}b_i^{-\alpha_i}|z_i|^{2\alpha_i}<1\Big\}
   \]
   and
   \[
   \tilde{V}_2:=\Big\{(z_1,z_2,\ldots,z_{d-1})\in\mathbb C^{d-1}:\sum_{i=1}^{d-1}|z_i|^2+c_d\prod_{i\in I_2}c_i^{-\beta_i}|z_i|^{2\beta_i}<1\Big\}.
   \]
Observe that $V_1$ and $V_2$ are graphs over $\tilde{V}_1$ and $\tilde{V}_2$, respectively. Hence, $V_1$ is biholomorphic to $\tilde{V}_1$ via the map $\varphi_1:\tilde{V}_1\to V_1$ defined by 
   \[
   \varphi_1(z_1,z_2,\ldots,z_{d-1})=\Big(z_1,z_2,\ldots,z_{d-1},\sqrt{b_d}\prod_{i\in I_1}b_i^{-\frac{\alpha_i}{2}}z_i^{\alpha_i}\Big),\quad (z_1,z_2,\ldots,z_{d-1})\in\tilde{V}_1,
   \]
 and  $V_2$ is also biholomorphic to $\tilde{V_2}$ via 
  \[
    \varphi_2(z_1,z_2,\ldots,z_{d-1})=\Big(z_1,z_2,\ldots,z_{d-1},\sqrt{c_d}\prod_{i\in I_2}c_i^{-\frac{\beta_i}{2}}z_i^{\beta_i}\Big), \quad (z_1,z_2,\ldots,z_{d-1})\in\tilde{V}_2.
   \]
Therefore, $\tilde{V}_1$ and $\tilde{V}_2$ are biholomorphic. Since $\tilde{V}_1$ and $\tilde{V}_2$ are bounded Reinhardt domains in $\mathbb C^{d-1}$, by \cite[Main Theorem]{Sunada}, there exists a transformation 
\[\psi:\mathbb C^{d-1}\to\mathbb C^{d-1}, \quad\psi(z_1,z_2,\dots, z_{d-1})=(r_1z_{\sigma(1)}, r_2z_{\sigma(2)},\dots, r_{d-1}z_{\sigma(d-1)}),\] where $r_i>0$.
The boundaries $\partial\tilde{V_1}$ and $\partial\tilde{V_2}$ of $\tilde{V_1}$ and $\tilde{V_2}$
are given by  \[
   \partial\tilde{V_1}:=\Big\{(z_1,z_2,\ldots,z_{d-1})\in\mathbb C^{d-1}:\sum_{i=1}^{d-1}|z_i|^2+b_d\prod_{i\in I_1}b_i^{-\alpha_i}|z_i|^{2\alpha_i} = 1\Big\}
   \]
   and
   \[
   \partial\tilde{V_2}:=\Big\{(z_1,z_2,\ldots,z_{d-1})\in\mathbb C^{d-1}:\sum_{i=1}^{d-1}|z_i|^2+c_d\prod_{i\in I_2}c_i^{-\beta_i}|z_i|^{2\beta_i} = 1\Big\}.
   \]
 Then, 
 since $\psi(\partial\tilde{V_1})=\partial\tilde{V_2}$, it follows that for all  $(z_1,z_2,\ldots,z_{d-1}) \in\partial\tilde{V_1}$,
\beq\label{dz_i_expression}
 \sum_{i=1}^{d-1} |z_i|^2 + b_d\prod_{i\in I_1} b_i^{-\alpha_i}|z_i|^{2\alpha_i}= \sum_{i=1}^{d-1} r_i^2 |z_{\sigma(i)}|^2 + c_d\prod_{i\in I_2}c_i^{-\beta_i}r_i^{2\beta_i}|z_{\sigma(i)}|^{2\beta_i}.
   \eeq
We denote by $\{e_i: i=1,\dots, d-1\}$ the standard basis of $\mathbb C^{d-1}$. In what follows, we shall need the following lemma. 
\begin{lemma}\label{L:cardinality}
If the cardinality of any one of the sets $I_1$ and $I_2$ is greater than $1$, then so is the other. 
\end{lemma}
\begin{proof}
Without loss of generality, we assume that $|I_2|\Ge 2$. Suppose in contrast that $I_1 = \{i_0\}$ for some $i_0\in\{1,\ldots,d-1\}$. In such a case, note that $\alpha_{i_0} \Ge 2$. Since $e_i \in \partial \tilde{V}_1$ for all $i \neq i_0$, then $\psi(e_i) = r_{\sigma^{-1}(i)} e_{\sigma^{-1}(i)} \in \partial \tilde{V}_2$. It then follows from the defining identity of $\partial \tilde{V}_2$ and the assumption $|I_2|\Ge 2$ that
\beq
\label{r'i-1-almost}
r_{\sigma^{-1}(i)} = 1, \quad i \neq i_0.
\eeq
This reduces \eqref{dz_i_expression} to
\beq\label{id_gencase_case3}
|z_{i_0}|^2+b_db_{i_0}^{-\alpha_{i_0}}|z_{i_0}|^{2\alpha_{i_0}}=r_{\sigma^{-1}(i_0)}^2|z_{i_0}|^2+c_d \prod_{i\in I_2}c_i^{-\beta_i}r_{i}^{2\beta_i}|z_{\sigma(i)}|^{2\beta_i}, \quad (z_1,z_2,\ldots,z_{d-1}) \in \partial \tilde{V}_1.
\eeq
 Suppose $\sigma^{-1}(i_0) \notin I_2$. Since there exists a point $(z_1,z_2,\ldots,z_{d-1})\in\partial\tilde{V_1}$ such that $z_{i_0}=0$ and $z_i\neq 0$ for all $i\neq i_0$, we get by \eqref{id_gencase_case3} that
\beqn
c_d\prod_{i\in I_2}c_i^{-\beta_i}|z_{\sigma(i)}|^{2\beta_i}=0,
\eeqn
which is a contradiction as $z_{\sigma(i)} \neq 0$ for all $i \in I_2$. Thus, $\sigma^{-1}(i_0) \in I_2$. Now for all points $(z_1,z_2,\ldots,z_{d-1})\in\partial\tilde{V_1}$ such that $z_{i_0}\neq0$, we get by the identities \eqref{id_gencase_case3} and \eqref{r'i-1-almost} that
\beq \label{simplified-form}
1&+&b_db_{i_0}^{-\alpha_{i_0}}|z_{i_0}|^{2(\alpha_{i_0}-1)}\notag\\
&=&r_{\sigma^{-1}(i_0)}^2
 +c_dc_{\sigma^{-1}(i_0)}^{-\beta_{\sigma^{-1}(i_0)}}r_{\sigma^{-1}(i_0)}^{2\beta_{\sigma^{-1}(i_0)}}|z_{i_0}|^{2(\beta_{\sigma^{-1}(i_0)}-1)}\prod_{i\in I_2\setminus \{\sigma^{-1}(i_0)\}}c_i^{-\beta_i}|z_{\sigma(i)}|^{2\beta_i}.
\eeq 
Letting $|z_{i_0}|\rar 0$ in the above, we conclude that $r_{\sigma^{-1}(i_0)} \Le 1$. On the other hand, since $|I_2|\Ge 2$, there exists $1\Le j\Le d-1$ such that $j\neq i_0$ and $\sigma^{-1}(j) \in I_2$.   Then applying \eqref{simplified-form} for a point $(z_1,z_2,\ldots,z_{d-1}) \in \partial \tilde{V}_1$ with $z_j = 0$  and $z_{i_0}\neq 0$, we get
\beqn
 1+b_db_{i_0}^{-\alpha_{i_0}}|z_{i_0}|^{2(\alpha_{i_0}-1)}=r_{\sigma^{-1}(i_0)}^2,
 \eeqn
which suggests that $r_{\sigma^{-1}(i_0)} > 1$. This is a contradiction. This completes the proof of the lemma.
\end{proof}

We now make the claim that:

\noindent\textbf{Claim: }
\text{$r_i = 1$ for all $i =1, \ldots, d-1$, and $\sigma(I_2) = I_1$}. 

We prove this claim in the following two cases.

\noindent\textbf{Case I.} Suppose $I_2= \{j_0\}$. Then by Lemma~\ref{L:cardinality}, $I_1=\{i_0\}$ for some $i_0\in\{1,\ldots,d-1\}$. Since $\mf n$ and $\mf m$ are tuples of distinct integers, it follows that $\alpha_{i_0}, \beta_{j_0}\Ge 2$. By an argument similar to that used in the proof of the above lemma, note that  
$e_i\in\partial\tilde{V}_1$ for all $i\neq i_0$ and consequently $\psi(e_i)=r_{\sigma^{-1}(i)}e_{\sigma^{-1}(i)}\in\partial\tilde{V}_2$. Hence, 
\[r_{\sigma^{-1}(i)}=1\ \text{ for all } i\neq i_0, \sigma(j_0).\]
Suppose that $\sigma(j_0) \neq i_0$. Since $\psi(\partial \tilde{V_1}) = \partial \tilde{V_2}$, any point $(z_1, \ldots, z_{d-1}) \in \partial \tilde{V_1}$ which satisfies 
\begin{equation}\label{boundary}
|z_{i_0}|^2+|z_{\sigma(j_0)}|^2+b_db_{i_0}^{-\alpha_{i_0}}|z_{i_0}|^{2\alpha_{i_0}}=1\end{equation}
also satisfies 
\[ 
r_{\sigma^{-1}(i_0)}^2|z_{i_0}|^2+r_{j_0}^2|z_{\sigma(j_0)}|^2+c_dc_{j_0}^{-\beta_{j_0}}r_{j_0}^{2\beta_{j_0}}|z_{\sigma(j_0)}|^{2\beta_{j_0}}=1.
\]
Now, solving for $|z_{\sigma(j_0)}|$, we get $z_{i_0}$ which satisfies \eqref{boundary} also satisfies 
\beqn
r_{\sigma^{-1}(i_0)}^2|z_{i_0}|^2&+&r_{j_0}^2(1-|z_{i_0}|^2-b_db_{i_0}^{-\alpha_{i_0}}|z_{i_0}|^{2\alpha_{i_0}})\notag\\
&+&c_dc_{j_0}^{-\beta_{j_0}}r_{j_0}^{2\beta_{j_0}}(1-|z_{i_0}|^2-b_db_{i_0}^{-\alpha_{i_0}}|z_{i_0}|^{2\alpha_{i_0}})^{\beta_{j_0}}-1=0.
\eeqn
Since there are infinitely many choices of $|z_{i_0}|$ in \eqref{boundary}, the polynomial 
\beqn
r_{\sigma^{-1}(i_0)}^2x^2+r_{j_0}^2(1-x^2-b_db_{i_0}^{-\alpha_{i_0}}x^{2\alpha_{i_0}})+c_dc_{j_0}^{-\beta_{j_0}}r_{j_0}^{2\beta_{j_0}}(1-x^2-b_db_{i_0}^{-\alpha_{i_0}}x^{2\alpha_{i_0}})^{\beta_{j_0}}-1
\eeqn
vanishes identically. This is a contradiction, as the coefficient of the highest degree term 
of the polynomial is $c_dc_{j_0}^{-\beta_{j_0}}r_{j_0}^{2\beta_{j_0}}(-b_db_{i_0}^{-\alpha_{i_0}})^{\beta_{j_0}}$ and which is non-zero.
Therefore, $\sigma(j_0)=i_0$. Consequently, $r_{i}=1$ for all $i\neq j_0$. By \eqref{dz_i_expression}, we now have
 \beqn
|z_{i_0}|^2+b_db_{i_0}^{-\alpha_{i_0}}|z_{i_0}|^{2\alpha_{i_0}}=r_{j_0}^2|z_{i_0}|^2+c_dc_{j_0}^{-\beta_{j_0}}r_{j_0}^{2\beta_{j_0}}|z_{i_0}|^{2\beta_{j_0}}, \quad (z_1,z_2,\ldots,z_{d-1})\in\partial\tilde{V}_1.
\eeqn
Considering points $(z_1,z_2,\ldots,z_{d-1})\in\partial\tilde{V}_1$ with $z_{i_0}\neq0$ in the above equation we have 
\beqn
1+b_db_{i_0}^{-\alpha_{i_0}}|z_{i_0}|^{2(\alpha_{i_0}-1)}=r_{j_0}^2+c_dc_{j_0}^{-\beta_{j_0}}r_{j_0}^{2\beta_{j_0}}|z_{i_0}|^{2(\beta_{j_0}-1)}.
\eeqn
Since $\alpha_{i_0}, \beta_{j_0} \Ge 2$, letting $|z_{i_0}|\to 0$ finally establishes that $r_{j_0}=1$.

\noindent\textbf{Case II.} Suppose $|I_2|\Ge 2$. Then by Lemma~\ref{L:cardinality}, $|I_1|\Ge 2$.  
By a similar argument as used earlier, $e_i\in\partial\tilde{V}_1$ and $\psi(e_i)=r_{\sigma^{-1}(i)}e_{\sigma^{-1}(i)}\in\partial\tilde{V}_2$ for all $i=1, \ldots, d-1$. Hence, using $|I_2| \Ge 2$, we have $r_{i}=1$ for all $i=1,\ldots, d-1$. Consequently, by \eqref{dz_i_expression},
\beqn
   b_d\prod_{i\in I_1}b_i^{-\alpha_i}|z_i|^{2\alpha_i}=c_d\prod_{i\in I_2}c_i^{-\beta_i}|z_{\sigma(i)}|^{2\beta_i}, \quad (z_1,z_2,\ldots,z_{d-1}) \in\partial\tilde{V_1}.
   \eeqn
Since for each $j\in\{1,\ldots,d-1\}$, there exists a point $(z_1,z_2,\ldots,z_{d-1})\in\partial\tilde{V_1}$ such that $z_j=0$ and $z_i\neq 0$ for all $i\neq j$, it follows that $\sigma(I_2)=I_1$. This proves the claim. 

By the above claim and \eqref{dz_i_expression}, it follows that 
\beqn
   b_d\prod_{i\in I_1}b_i^{-\alpha_i}|z_i|^{2\alpha_i}=c_d\prod_{i\in I_1}c_{\sigma^{-1}(i)}^{-\beta_{\sigma^{-1}(i)}}|z_{i}|^{2\beta_{\sigma^{-1}(i)}}, \quad (z_1,z_2,\ldots,z_{d-1}) \in\partial\tilde{V_1}.
   \eeqn
We now show that 
\beq
\label{alpha-beta-relation}
\alpha_i=\beta_{\sigma^{-1}(i)}, \quad  i\in I_1.
\eeq
Suppose, to the contrary, that there exists $j \in I_1$ such that $\alpha_j \neq\beta_{\sigma^{-1}(j)}$. Without loss of generality, assume $\alpha_j>\beta_{\sigma^{-1}(j)}$. We show that there exists a sequence of points in $\partial\tilde{V}_1$ such that the modulus of the $j$-th coordinate tends to zero, while the moduli of the remaining coordinates stay bounded below by a positive constant. Given any $\epsilon$, with $1>\epsilon>0$, fix $0< A< \sqrt{\frac{1-\epsilon}{d-1}}$. Let $z_i$ $(i\neq j)$ be complex numbers with $|z_i|\Ge A$ for all $i\neq j$, such that  
\beqn
\sum_{\substack{i=1 \\ i \neq j}}^{d-1} |z_i|^2 = 1 - \epsilon. 
\eeqn
For such a choice, let  $z_j$ be such that $|z_j|$ satisfies 
\beqn
|z_j|^2 + \Big(b_d b_j^{-\alpha_j} \prod_{\substack{i\in I_1 \\ i \neq j}}b_i^{-\alpha_i}|z_i|^{2\alpha_i}\Big) |z_j|^{2\alpha_j} = \epsilon.
\eeqn
Then clearly $|z_j|\in (0, \sqrt{\epsilon})$ and by the definition of $\partial\tilde{V}_1$, it follows that $(z_1,z_2,\dots,z_{d-1})\in \partial\tilde{V}_1$. This shows, in particular, that we can construct a sequence of points in $\partial\tilde{V}_1$ with required properties. By \eqref{dz_i_expression} and using the fact that $\sigma(I_2)=I_1$, such a sequence of points satisfies
\beqn
|z_j|^{2(\alpha_j-\beta_{\sigma^{-1}(j)})}\prod_{\substack{i\in I_1 \\ i \neq j}}|z_i|^{2(\alpha_i-\beta_{\sigma^{-1}(i)})}=\frac{c_d\prod_{i\in I_1}b_i^{\alpha_i}}{b_d\prod_{i\in I_2}c_i^{\beta_i}},
\eeqn
which is a contradiction as the left-hand side of the above identity converges to $0$, but the right-hand side remains a fixed positive constant. This leads to a contradiction, and therefore, we have the claim as in ~\eqref{alpha-beta-relation}.
Finally, using \eqref{alpha-beta-relation} and \eqref{dz_i_expression}, we obtain
\beq\label{last_smp_exp}
c_d \prod_{i\in I_1}b_i^{\alpha_i}=b_d\prod_{i\in I_2}c_i^{\beta_i}.
\eeq
Define a permutation $\tilde{\sigma}$ on the index set $\{1,\ldots,d\}$ that extends $\sigma$, that is,
\beq \label{new-permutation}
\text{$\tilde{\sigma}(i): = \sigma(i)$ for $i=1, \ldots, d-1$ and $\tilde{\sigma}(d): = d$}.
\eeq
By the assumptions on the tuple $\mf n$, note that $\mathcal J(\tilde{\sigma}(\mf n))=\{\ \tilde{\sigma}^{-1}(I_1)\cup\{d\}\}=\{I_2\cup\{d\}\}$, and the unique partition and tuple associated with $I_2\cup\{d\}$ are $\{I_2,\{d\}\}$ and $(\alpha_i)_{i\in I_2\cup\{d\}}=(\beta_i)_{i\in I_2\cup \{d\}}$ where $\alpha_d=\beta_d=1$, respectively, where the equality of tuples follows from \eqref{alpha-beta-relation}.  Similarly, for the tuple $\mf m$,  $\mathcal J(\mf m)=\{I_2\cup\{d\}\}$, and the unique partition and tuple associated with $I_2\cup\{d\}$ are $\{I_2,\{d\}\}$ and $(\beta_i)_{i\in I_2\cup\{d\}},$ respectively.
Furthermore, using  \eqref{last_smp_exp} and \eqref{new-permutation}, we have
\[
c_d\prod_{i\in I_2}b_{\tilde{\sigma}(i)}^{\beta_i}=b_{\tilde{\sigma}(d)}\prod_{i\in I_2}c_i^{\beta_i}.
\]
Hence, the $d$-tuples $(\tilde{\sigma}(\mf b),\tilde{\sigma}(\mf n))$ and $(\mf c,\mf m)$ admit a similar pattern. This completes the proof.
\end{proof}


\section{Application to classical CNP kernels}

In this section, we demonstrate that the results obtained in this paper extend to CNP kernels on domains in $\mathbb C^n$. As a consequence, the theory developed here is applicable to the study of both algebraic and isometric isomorphism problems for the corresponding multiplier algebras. This is achieved by establishing a correspondence between such kernels. We begin by describing how to construct a CNP Dirichlet series kernel from a CNP kernel while keeping the multiplier variety the same.

 Let $k,d\in \mathbb N\cup \{\infty\}$, and let  $\Omega$ be a domain in $\mathbb C^k$. Consider the CNP kernel $K: \Omega \times \Omega \rar \mathbb C$ given by 
\begin{align}\label{classical kernel}
K({\mf z}, {\mf w}) = \frac{1}{1 - \sum\limits_{j=1}^d b_{j} {\mf z}^{\bm \alpha_j}\overline{\mf w}^{\bm\alpha_j}}, \quad \mf z,\mf w\in \Omega,
\end{align}
where $\{\bm\alpha_j: j=1,\dots,d\} \subseteq \mathbb Z_+^k\setminus \{0\}$ and each $b_{j}$ is a positive real number. Observe that the kernel can also be written as
$$
K({\mf z}, {\mf w}) = \frac{1}{1 - \inp{f({\mf z})}{f({\mf w})}_{\mathbb C^d}},
$$
where $f(\mf z) = (\sqrt{b_{j}}{\mf z}^{\bm\alpha_j})_{j=1}^d$.
To such a kernel, one can associate a CNP Dirichlet series kernel as follows.  \emph{Apriori}, this association depends on a choice of primes; however, as a consequence of Theorem ~\ref{var-equal-set}, it follows that this choice does not affect the resulting multiplier algebra, since the associated multiplier varieties coincide. Fix $k$ distinct primes and enumerate them as $\{p_i:i=1,\dots,k\}$. 
For any $\bm\alpha = (\alpha_1, \alpha_2, \ldots, \alpha_k) \in \mathbb Z_+^k$, we set 
\begin{align}\label{multiproduct}
\mf p^{\bm\alpha}: = \prod_{i =1}^k p_i^{\alpha_i}.
\end{align}
With this choice, the associated CNP Dirichlet series kernel corresponding to $K$ is given by 
\begin{align}\label{dirichlet kernel}
K_{\mf b, \mf n}(s, u) = \frac{1}{1 - \sum\limits_{j=1}^d b_{j} (\mf p^{\bm\alpha_j})^{-s- \overline{u}}} = \frac{1}{1 - \inp{f_{\mf b, \mf n}(s)}{f_{\mf b, \mf n}(u)}}_{\mathbb C^d},\quad s,u\in \mathbb H_\rho
\end{align}
where $\mf b = (b_{1}, \ldots, b_{d})$ is the weight data, $\mf n = (\mf p^{\bm\alpha_1}, p^{\bm\alpha_2}, \ldots, \mf p^{\bm\alpha_d})$ is the frequency data, and $\rho\in\mathbb R$ is chosen so that the kernel is well-defined on $\mathbb H_\rho$. The function $f_{\mf b, \mf n}$ is given by 
$$f_{\mf b, \mf n}(s) = (\sqrt{b_{j}}(\mf p^{\bm\alpha_j})^{-s})_{j=1}^d, ~s\in\mathbb H_\rho.$$ One way to choose $\rho$ is to ensure that the point $(p_1^{-\rho}, p_2^{-\rho},\dots, p_k^{-\rho})\in \{\mathbf z\in\mathbb C^k: \sum_{j=1}^db_j|\mf z^{\bm \alpha_j}|^2<1\}$.
We will show that the multiplier varieties associated with $K$ and $K_{\mf b,\mf n}$ coincide. Consequently, their multiplier algebras are isometrically isomorphic. This establishes a bridge between classical CNP kernels and CNP kernels arising from Dirichlet series, allowing results from the latter setting to be applied in the former.

\begin{proposition}\label{same mv}
Let $K$ and $K_{\mf b, \mf n}$ be the CNP kernels as in ~\eqref{classical kernel} and ~\eqref{dirichlet kernel}, respectively. If $V$ and $V_{\mf b,\mf n}$ denote the associated multiplier varieties, then 
\[
V=V_{\mf b, \mf n}.
\]
\end{proposition}
\begin{proof}
Recall that, by the definition of multiplier variety, $V$ is given by
$$
V = \{{\mf z} \in \mathbb B_d : g({\mf z}) = 0~~\text{for all}~~g \in \mathcal M_d~~\text{such that}~~g|_{f(\Omega)} \equiv 0 \}.
$$
Let $g\in\clm_d$ be such that $g|_{f(\Omega)} \equiv 0$. Since $\Omega$ is open and $g \circ f$ vanishes on $\Omega$, it follows by the Identity theorem that $g(f(\mf z))=0$  
for all $\mf z\in \mathbb C^k$ with  
$$\sum_{i=1}^d b_{i} |{\mf z^{\bm\alpha_i}}|^{2} < 1.
$$
Hence $g$ vanishes on  $f_{\mf b, \mf n}(\mathbb H_\rho)$, since \[f_{\mf b, \mf n}(\mathbb H_\rho)\subseteq f\big(\Big\{\mf z\in \mathbb C^k: \sum_{i=1}^d b_{i} |{\mf z}^{\bm\alpha_i}|^{2}<1\Big\}\big).\] 
By definition of multiplier variety, this implies $g|_{V_{\mf b,\mf n}}\equiv0$, and hence $V_{\mf b, \mf n} \subseteq V$. 

 For reverse inclusion, by Theorem~\ref{variety-gen}, $$V_{\mf b, \mf n} = \mathbb B_d \cap \bigcap_{J \in \mathcal J(\mf n)} Z(q_{_J}),$$
where for each $J\in \mathcal J(\mf n)$,
 \begin{equation*}
 q_{_J} (\mf z) = \prod\limits_{i \in J_1}
 (\sqrt{b_{i}})^{\beta_i}\prod\limits_{i \in J_2}z_i^{\beta_i}-\prod\limits_{i \in J_2}(\sqrt{b_{i}})^{\beta_i}\prod\limits_{i \in J_1}z_i^{\beta_i},\quad \mf z = (z_i)_{i=1}^d \in \mathbb C^d.
\end{equation*}
 Recall that for any $J\in \mathcal J(\mf n)$, if $\mf n=(n_1,n_2,\dots,n_d)$, then $(\beta_j)_{j \in J}$ is uniquely determined by
$$\prod_{i \in J_1}	n_i^{\beta_i} = \prod_{i \in J_2} n_i^{\beta_i}.$$ Since in the present context $n_i=\mf p^{\bm\alpha_i},$ the above identity is equivalent to 
$$
\prod_{i\in J_1} \mf p^{\beta_i\bm\alpha_i}=\prod_{i\in J_2} \mf p^{\beta_i\bm\alpha_i}.
$$
Here if $\bm\alpha_i = (\alpha_{i1},\alpha_{i2},\dots, \alpha_{ik})\in \mathbb Z_+^k$, then 
$\beta_i\bm\alpha_i=(\beta_i\alpha_{i1}, \beta_i\alpha_{i2},\dots,\beta_i\alpha_{ik})$. Note that, even when $k=\infty$, then each element $\bm \alpha\in \mathbb Z_+^k$ has finitely many non-zero co-ordinates and therefore, $\mf p^{\bm\alpha}$ reduces to a finite product. Now using ~\eqref{multiproduct} and comparing the powers of different primes, we get 
\beq
\label{m-beta-relation}
\sum_{i \in J_1} \beta_i \alpha_{ij}= \sum_{i \in J_2} \beta_i \alpha_{ij},  \quad j =1, \ldots, k.
\eeq
To complete the proof, it is enough to verify that $f(\Omega) \subseteq V_{\mf b, \mf n}$. So, let ${\mf w}: = (\sqrt{b_{i}} {\mf z}^{\bm\alpha_i})_{i=1}^d \in f(\Omega)$. Then, 
\beqn
  q_{_J}({\mf w}) &=& \prod\limits_{i \in J_1} (\sqrt{b_{\mf m_i}})^{\beta_i}\prod\limits_{i \in J_2} (\sqrt{b_{i}}{\mf z}^{\bm\alpha_i})^{\beta_i}-\prod\limits_{i \in J_2} (\sqrt{b_{i}})^{\beta_i}\prod\limits_{i \in J_1}(\sqrt{b_{i}}{\mf z}^{\bm\alpha_i})^{\beta_i}\\
&=& \Big(\prod\limits_{i \in J} \sqrt{b_{i}}^{\beta_i}\Big) \Big(\prod\limits_{i \in J_2} {\mf z}^{\beta_i\bm\alpha_i} - \prod\limits_{i \in J_1} {\mf z}^{\beta_i\bm\alpha_i}\Big)\\
&=&  \Big(\prod\limits_{i \in J} \sqrt{b_{i}}^{\beta_i} \Big)\Big(\prod_{i\in J_2}\prod_{j=1}^k z_j^{\beta_i \alpha_{ij}} - \prod_{i\in J_1}\prod_{j=1}^k z_j^{\beta_i\alpha_{ij}}\Big) \overset{\eqref{m-beta-relation}}= 0,
\eeqn
which proves that $f(\Omega) \subseteq V_{\mf b, \mf n}$. This completes the proof.
\end{proof}



Now we indicate the converse direction of the correspondence. That is, given a CNP Dirichlet series kernel, we construct a CNP kernel on a domain while keeping the multiplier variety the same. Let $K_{\mf b, \mf n}$ be a CNP Dirichlet series kernel on $\mathbb H_\rho$ with weight data $\mf b = (b_1,\ldots,b_d)$ and frequency data $\mf n = (n_1,n_2,\ldots,n_d)$. Let $p_1,p_2,\ldots,p_k$ be distinct primes that generate all the frequencies; that is,   \[n_i = \mf p^{\bm\alpha_i}=\prod_{j=1}^k p_j^{\alpha_{ij}},\] for some $\bm\alpha_i = (\alpha_{i1}, \alpha_{i2}, \dots, \alpha_{ik}) \in \mathbb Z_+^k$. Define the kernel $K : \Omega \times \Omega \to \mathbb C$ by
$$
K(\mf z, \mf w) = \frac{1}{1 - \sum\limits_{j=1}^d b_j {\mf z}^{\bm\alpha_j} \overline{\mf w}^{\bm\alpha_j}}, \quad \mf z, \mf w \in \Omega,
$$
where 
\[\Omega \subseteq \big\{\mf z \in \mathbb C^k : \sum_{i=1}^d b_{i} |{\mf z}^{\bm\alpha_i}|^2 < 1\big\}\] is a domain. Then it is straightforward to check that the multiplier varieties $V$ and $V_{\mf b, \mf n}$ associated with $K$ and $K_{\mf b, \mf n}$, respectively, coincide. We leave the details to the reader, as they are analogous to those of the preceding proposition. 

We present some examples to illustrate the above discussion.
\begin{example}

\noindent (i) Let $K_1$ and $K_2$ be CNP kernels, as in ~\eqref{classical kernel}, with different number of terms in the denominator; that is, $d$ is different for $K_1$ and $K_2$. Then, by the above discussion, the associated CNP Dirichlet series kernels have frequency data of different lengths. Therefore, by Proposition~\ref{never-isom-iso}, the multiplier algebras associated with $K_1$ and $K_2$ are not isometrically isomorphic.  
\vspace{0.1in}

\noindent (ii)
Let $b_i$ and $c_i$ be positive real numbers for all $i=1,2,3$. Consider the kernels
\[
K_1(\mf z, \mf w)=\frac{1}{1- (b_1 z_1\overline{w_1}+b_2 z_2\overline{w_2}+b_3 z_1 z_2 \overline{w_1 w_2})}, \quad \mf z, \mf w \in \Omega_1,
\]
where $\Omega_1 =\{(z_1, z_2) \in \mathbb C^2 : b_1 |z_1|^2 + b_2 |z_2|^2 + b_3 |z_1 z_2|^2 < 1\}$
and
\[
K_2(\mf z, \mf w)=\frac{1}{1- (c_1 z_1 z_2 \overline{w_1 w_2}+c_2 z_1^2 z_2 \overline{w_1^2 w_2}+c_3 z_1^3 z_2^2 \overline{w_1^3 w_2^2})}, \quad \mf z, \mf w \in\Omega_2,
\]
where $\Omega_2 =\{(z_1, z_2) \in \mathbb C^2 : c_1 |z_1 z_2|^2 + c_2 |z_1^2 z_2|^2 + c_3 |z_1^3 z_2^2|^2 < 1\}.$ 
Let $p_1$ and $p_2$ be different primes. For this choice of primes, the associated CNP Dirichlet series kernels are $K_{\mf b, \mf n}$ and $K_{\mf c, \mf m }$, where 
\[\mf b=(b_1,b_2,b_3),\ \mf n=(p_1, p_2, p_1p_2),\ \mf c=(c_1,c_2,c_3),\ \mbox{ and } \mf m=(p_1p_2, p_1^2p_2, p_1^3p_2^2).\] 
Then, the multiplier varieties associated with $K_{\mf b, \mf n}$ and $K_{\mf c, \mf m}$ are given by
\[
V_1 = \{\mf z \in \mathbb B_3 : \sqrt{b_3} z_1 z_2 = \sqrt{b_1 b_2} z_3\}
\]
and
\[
V_2 = \{\mf z \in \mathbb B_3 : \sqrt{c_3} z_1 z_2 = \sqrt{c_1 c_2} z_3\},
\]
respectively. If, in addition, we assume that $b_3 c_1 c_2 = c_3 b_1 b_2$, then $V_1 = V_2$. Consequently, the multiplier algebras $\clm(\clh(K_1))$ and $\clm(\clh(K_2))$ are isometrically isomorphic.
\end{example}

We conclude the article by showing that even a very special case of Theorem~\ref{isometric isomorphism} yields new results in the classical setting. 
For a $(k+1)$-tuple $\mf b=(b_1,b_2,\dots,b_{k+1})$ of positive real numbers and let $ \alpha \in \mathbb Z_+^k$ have at least two nonzero co-ordinates. We denote by $S_{\mf b,\alpha}$ the CNP kernel 
defined as 
\begin{align}\label{sbm}
S_{\mf b,\alpha}(\mf z,\mf w)=\frac{1}{1-\sum_{i=1}^{k}b_iz_i\overline{w_i}-b_{k+1}\mf z^{\alpha}\overline{\mf w}^{\alpha}}, \quad (\mf z,\mf w\in\Omega),
\end{align}
where 
\[\Omega=\big\{(z_1, z_2,\dots,z_k)\in \mathbb C^k: \sum_{i=1}^k b_i|z_i|^2+ b_{k+1}|\mf z^{\alpha}|^2<1\big\}\] is a domain in $\mathbb C^k$. We write $\mathcal{M}(\mathcal{H}(S_{\mf{b}, \alpha}))$ for the multiplier algebra associated with $S_{\mf b,\alpha}$. 
\begin{theorem}\label{cnp classical}
Let $S_{\mf b, \alpha}$ and $S_{\mf c, \alpha' }$ be CNP kernels as in ~\eqref{sbm}. Then the following are equivalent: 
\begin{itemize}
\item[(i)] $\mathcal{M}(\mathcal{H}(S_{\mf{b}, \alpha}))$ and $\mathcal{M}(\mathcal{H}(S_{\mf{c}, \alpha'}))$ are isomorphic;
\item[(ii)] there exists a permutation $\sigma$ of the index set $\{1,2,\dots,k\}$ such that 
\[
\sigma(\alpha)=\alpha' \mbox{ and } c_{k+1}\prod_{i=1}^kb_{\sigma(i)}^{\alpha_i'}=b_{k+1}\prod_{i=1}^kc_i^{\alpha_i'},
\]
where $\alpha'=(\alpha_1',\alpha_2',\dots, \alpha_k')$;

\item[(iii)] $\mathcal M(\mathcal H(S_{\mf b, \alpha}))$ and $\mathcal M(\mathcal H(S_{\mf c, \alpha'}))$ are isometrically isomorphic. 
\end{itemize} 

\end{theorem}
\begin{proof}
Associated with $S_{\mf b, \alpha}$ and $S_{\mf c, \alpha' }$, we consider the CNP Dirichlet series kernels $K_{\mf b, \mf n}$ and $K_{\mf c, \mf n'}$, respectively, where
\[
\mf n=\big(p_1,p_2,\dots, p_k, \prod_{i=1}^k p_i^{\alpha_i}\big)\ \mbox{and } \mf n'=\big(p_1,p_2,\dots, p_k, \prod_{i=1}^k p_i^{\alpha_i'}\big),\]
for distinct primes $p_1,\dots, p_k$. Then by Proposition~\ref{same mv}, multiplier varieties remain the same. Hence, $\mathcal{M}(\mathcal{H}(S_{\mf{b}, \alpha}))$ and $\mathcal{M}(\mathcal{H}(S_{\mf{c}, \alpha'}))$ are isomorphic (respectively, isometrically isomorphic) if and only if  $\mathcal{M}(\mathcal{H}(K_{\mf{b}, \mf n}))$ and $\mathcal{M}(\mathcal{H}(K_{\mf{c}, \mf n'}))$ are isomorphic (respectively, isometrically isomorphic). 
The result now follows by applying Theorem~\ref{isometric isomorphism} to $K_{\mf b, \mf n}$ and $K_{\mf c, \mf n'}$, and observing that item (ii) is equivalent to the pairs  $(\sigma(\mf b), \sigma(\mf n))$ and $(\mf c, \mf n')$ having a similar pattern.   
\end{proof}

\vspace{0.1in} 

\noindent\textbf{Acknowledgement:} The authors are extremely grateful to Prof. Sushil Gorai for a brief but valuable discussion that eventually led to the proof of Proposition~\ref{negative}. The first named author is supported by Prime Minister's Research Fellowship, ID: 1303115. The second named author's research is supported by the Mathematical Research Impact Centric Support (MATRICS) grant. The third named author acknowledges the support received from the Indian Institute of Technology Bombay through an Institute postdoctoral fellowship.

\vspace{0.1in}
\noindent\textbf{Data Availability:}
Data sharing is not applicable to this article, as no datasets were generated or analyzed during
the current study. In case any datasets are generated during and/or analyzed during the current study, they must
be available from the corresponding author on reasonable request.

\vspace{0.1in}
\noindent\textbf{Conflict of interest:} There is no conflict of interest.

\end{document}